\def\ch{\chi}
\def\D{{\mathbb D}}  \def\T{{\mathbb T}}
  \def\N{{\mathbb N}}
    \def\cb{{\mathcal B}}
\def\ch{{\mathcal H}}
\def\({\left(}       \def\){\right)}
\newtheorem{prop}{Proposition}
\newtheorem{lemma}[prop]{Lemma}
\newtheorem{thm}[prop]{Theorem}
\newtheorem{cor}[prop]{Corollary}
\newtheorem{propx}{Proposition}
\newtheorem{lemmax}[propx]{Lemma}
\newtheorem{thmx}[propx]{Theorem}
\begin{document}
\title[Semigroups of c. o. and integral operators on mixed norm spaces]{Semigroups of composition operators and integral operators on mixed norm spaces}
\author[I. Ar\'evalo]{Irina Ar\'evalo}
\address{Departamento de Matem\'aticas, Universidad Aut\'onoma de
Madrid, 28049 Madrid, Spain}
\email{irina.arevalo@uam.es}

\author[M.D. Contreras]{Manuel D. Contreras}
\address{Departamento de Matem\'atica Aplicada II and IMUS, Escuela T\'ecnica Superior de Ingenier\'ia, Universidad de Sevilla,
Camino de los Descubrimientos, s/n 41092, Sevilla, Spain}
\email{contreras@us.es}

\author[L. Rodr\'iguez-Piazza]{Luis Rodr\'iguez-Piazza}
\address{Departmento de An\'alisis Matem\'atico and IMUS, Facultad de Matem\'aticas, Universidad
de Sevilla, aptdo 1160, 41080 Sevilla, Spain}
\email{piazza@us.es}

\subjclass[2010]{Primary 30H20, 47B33, 47D06; Secondary 46E15, 47G10}

\date{\today}

\keywords{Mixed norm spaces, semigroups of composition operators, integral operators}

\thanks{This research was supported in part by Ministerio de Econom\'{\i}a y Competitividad, Spain,  and the European Union (FEDER) projects MTM2015-63699-P and MTM2015-65792-P, and Junta de Andaluc{\'i}a, FQM133.}

\begin{abstract}
We characterize the semigroups of composition operators that are strongly continuous on the mixed norm spaces $H(p,q,\alpha)$. First, we study the separable spaces $H(p,q,\alpha)$ with $q<\infty,$ that behave as the Hardy and Bergman spaces. In the second part we deal with the spaces $H(p,\infty,\alpha),$ where polynomials are not dense. To undertake this study, we introduce the integral operators, characterize its boundedness and compactness, and use its properties to find the maximal closed linear subspace of $H(p,\infty,\alpha)$ in which the semigroups are strongly continuous. In particular, we obtain that this maximal space is either $H(p,0,\alpha)$ or non-separable, being this result the deepest one in the paper.
\end{abstract}
\maketitle

\section{Introduction}

Let $X$ be a Banach space of analytic functions on the unit disk $\mathbb{D}$. A family $(C_t)_{t\geq 0}$ of bounded operators on $X$ forms a {\sl semigroup} if $C_tf\in X$ for every $f\in X$ and satisfies the following

\begin{enumerate}[1.]
\item $C_0$ is the identity in the space of bounded operators on $X,$
\item $C_{t+s}=C_tC_s,$ for all $t,s\geq0.$
\end{enumerate}

Such a family of bounded operators $(C_t)$ is called {\sl strongly continuous} on a Banach space $X$ if for every $f\in X$ we have $C_tf\in X$ for all $t\geq 0$ and $$\lim_{t\to0^+}\|C_tf-f\|_X=0,$$ and {\sl uniformly continuous} if $$\lim_{t\to0^+}\|C_t-I\|_X=0$$ with $I$ the identity operator.

A {\sl (one-parameter) semigroup of analytic functions} is a family $\{\varphi_t:t\geq0\}$ of analytic self-maps of the disk  that satisfies
\begin{enumerate}[1'.]
\item $\varphi_0$ is the identity in $\mathbb{D},$
\item $\varphi_{t+s}=\varphi_t\circ\varphi_s,$ for all $t,s\geq0,$
\item \label{pointwise1}$\varphi_t\to\varphi_0$ as $t\to0$ uniformly on compact sets of $\mathbb{D}.$
\end{enumerate}

Associated to every semigroup of analytic functions $(\varphi_t)$ we have a semigroup of bounded operators on $X$ given by the composition operator $C_t,$ that is, $C_{t} f=f\circ\varphi_t$ for $f\in X$ and $t\geq0.$ The aim is to understand operator theory properties, such as spectrum, ideals or dynamics of the semigroup of composition operators, in terms of geometric function theory. 
The theory of semigroups of bounded linear operators began with the works of Hille and Yosida (see \cite[Chapter IX]{Yos}). On spaces of analytic functions they were first studied by Berkson and Porta in \cite{BerPor} on Hardy spaces, and later on by Siskakis on Bergman and Dirichlet spaces (\cite{SisBergman} and \cite{SisDirichlet}) (see also his excelent survey \cite{SisSem}). In recent works such as \cite{BCDMS}, \cite{BCDMPS} and \cite{BCHMP} the authors consider semigroups of composition operators on the Bloch space, BMOA and weighted Banach spaces of analytic functions. In the present work, we study the continuity of the semigroup of composition operators $(C_t)$ on the mixed norm spaces.

 For $f$ in the space of analytic functions on the disk $ \mathcal{H}(\mathbb{D})$ and $r\in (0,1)$, let $M_p(r,f)$ be the integral mean $$M_p(r,f)=\left(\int_0^{2\pi}|f(re^{i\theta})|^p\,\frac{d\theta}{2\pi}\right)^{\frac{1}{p}}$$ for $0<p<\infty$ and $$M_\infty(r,f)=\max_{0\leq\theta<2\pi}|f(re^{i\theta})|.$$ 
We consider the spaces $H(p, q, \alpha),$ $0<p,q\leq\infty,$ $0<\alpha<\infty,$ consisting of those analytic functions on $\mathbb{D}$ such that $$\|f\|_{p,q,\alpha}^q=\alpha q\int_0^1(1-r)^{\alpha q-1}M_p^q(r,f)\,dr<\infty,$$ for $q<\infty,$ and $$\|f\|_{p,\infty,\alpha}=\sup_{0\leq r<1}(1-r)^\alpha M_p(r,f)<\infty.$$ 
We also define the ``little-oh'' version, $H(p,0,\alpha),$ as the subspace of functions in $H(p,\infty,\alpha)$ such that $$\lim_{r\to1^-}(1-r)^\alpha M_p(r,f)=0.$$
These integral expressions first appear in Hardy and Littlewood's paper on properties of the integral mean \cite{HL}, but the mixed norm spaces were not explicitly defined until Flett's works \cite{Flett}, \cite{Flett2}. Since then, these spaces have been studied by many authors (see \cite{aj}, \cite{Blasco}, \cite{BKV}, \cite{Gab}, \cite{MP}, \cite{Sledd}). For instance, they appear naturally in the study of coefficient multipliers on Hardy and weighted spaces, and the generalized Hilbert operator on weighted Bergman spaces (see \cite{JVA}, \cite{PR13}, \cite{CGP}, \cite{GGPS} and \cite{PS}). Recently, results on pointwise growth and a characterization of the inclusions between these spaces were given in \cite{Ar}.

The mixed norm spaces form a family of complete spaces that contains the Hardy and Bergman spaces. In particular, one can identify the weighted Bergman space $A^p_\alpha,$ $0<p<\infty,$ $-1<\alpha<\infty,$ of analytic functions on the unit disk such that $$\int_{\mathbb{D}}|f(z)|^p(1-|z|^2)^\alpha\,dA(z)<\infty$$ with the space $H\left(p,p,\frac{\alpha+1}{p}\right),$ and the Hardy space $H^p$ of functions in $\mathcal{H}(\mathbb{D})$ for which $$\sup_{0<r<1}M_p(r,f)<\infty$$ with the limit case $H(p,\infty,0).$ The mixed norm spaces are also related to other spaces of analytic functions, such as Besov and Lipschitz spaces, via fractional derivatives (see \cite[Chapter 7]{JVA}).

 For $q<\infty$, the spaces $H(p,q,\alpha)$  are separable and we will see (Section 4) that the semigroups of composition operators behave similarly to those on Hardy and Bergman spaces, that is, every semigroup of analytic functions induces a strongly continuous semigroup of composition operators but no non-trivial semigroup generates a uniformly continuous semigroup. Nevertheless, for $q=\infty$ polynomials are not dense and we need a different approach. 
We will see that, unlike the integral case, the strong continuity will depend on the particular semigroup. To show this dependence, we define the maximal closed linear subspace of a space $X$ such that the semigroup $(\varphi_t)$ generates a semigroup of operators on it, $$[\varphi_t,X]=\{f\in X:\|f\circ\varphi_t-f\|_X\to0\text{ as }t\to0\}.$$ 
In section 6, we show that
 $$H(p,0,\alpha)\subseteq[\varphi_t,H(p,\infty,\alpha)]\subsetneq H(p,\infty,\alpha)$$ for every semigroup of analytic functions $(\varphi_t).$
When the Denjoy-Wolff point of the semigroup belongs to $\mathbb D$, we characterize when $H(p,0,\alpha)=[\varphi_t,H(p,\infty,\alpha)]$ and prove that if this is  not the case, then $[\varphi_t,H(p,\infty,\alpha)]$ is a non-separable space. On the contrary, when the Denjoy-Wolff point of the semigroup does not belong to the unit disk, then $[\varphi_t,H(p,\infty,\alpha)]$ is always non-separable. The techniques used in this section are, by far, the newest in the area of our work.

In \cite{BCDMPS} the space $[\varphi_t,H(p,\infty,\alpha)]$ was characterized in terms of the integral operator of some function associated to the semigroup. Let us recall that given $g\in \mathcal H (\mathbb D)$, the {\sl integral operator} is defined by 
$$
T_g(f)(z)=\int_0^z f(\zeta)g'(\zeta)\, d\zeta, \quad f\in \mathcal H(\mathbb D).
$$
Therefore, in Section 5 we characterize the boundedness and compactness of the integral operator on mixed norm spaces. Our main result in this sections states that if $g$ belongs to the Bloch space but not to the little Bloch space, then the operator $T_g: H(p,\infty,\alpha)\to H(p,\infty,\alpha)$ fixes 
a copy of $\ell_\infty$. Consequently this last operator has a non separable image.
We also give some applications on inclusions of exponential functions on a Banach space of analytic functions. 


From now on, we will understand $1/\infty$ as zero, the letters $A, B, C, C', K, M$ will be positive constants, and we will say that two quantities are comparable, denoted by $\alpha\approx\beta,$ if there exist two positive constants $C$ and $C'$ such that $$C\alpha\leq\beta\leq C'\alpha.$$

\section{Background on semigroups of analytic functions and composition operators}

As we stated in the introduction, a family $\Phi=\{\varphi_t:t\geq0\}$ of analytic self-maps of the disk $\mathbb{D}$ is a (one-parameter) \textit{semigroup of analytic functions} if it satisfies the following three conditions:
\begin{enumerate}[1.]
\item $\varphi_0$ is the identity in $\mathbb{D},$
\item $\varphi_{t+s}=\varphi_t\circ\varphi_s,$ for all $t,s\geq0,$
\item \label{pointwise}$\varphi_t\to\varphi_0$ as $t\to0$ uniformly on compact sets of $\mathbb{D}.$
\end{enumerate}
In \cite{BerPor} the authors prove the following basic properties of semigroups of analytic functions:
\begin{itemize}
\item If $(\varphi_t)$ is a semigroup, then each map $\varphi_t$ is univalent.
\item The \textit{infinitesimal generator} of $(\varphi_t)$ is the function $$G(z):=\lim_{t\to0^+}\frac{\varphi_t(z)-z}{t},\,z\in\mathbb{D}.$$ This convergence holds uniformly on compact subsets of $\mathbb{D},$ so $G\in\mathcal{H}(\mathbb{D}).$ The generator satisfies $$G(\varphi_t(z))=\frac{\partial\varphi_t(z)}{\partial t}=G(z)\frac{\partial\varphi_t(z)}{\partial z}$$ and characterizes the semigroup uniquely.
\item The function $G$ has a unique representation $$G(z)=(\overline{b}z-1)(z-b)P(z),\,z\in\mathbb{D},$$ where $P\in\mathcal{H}(\mathbb{D})$ with $\text{Re}\,P\geq0$ in $\mathbb{D}$ and $b\in\overline{\mathbb{D}}$ is the \textit{Denjoy-Wolff point} of the semigroup, that is, every self-map of the semigroup shares a common Denjoy-Wolff point $b.$
\item If $(\varphi_t)$ is non-trivial, there exists a unique univalent function $h:\mathbb{D}\to\mathbb{C},$ called the \textit{Koenigs function} of $(\varphi_t)$ such that:
\begin{itemize}
\item If $b\in\mathbb{D}$ then $h(b)=0,$ $h'(b)=1,$ $$h(\varphi_t(z))=e^{G'(b)t}h(z)$$ for $t\geq0,$ $z\in\mathbb{D}$ and $$h'(z)G(z)=G'(b)h(z),$$ $z\in\mathbb{D}.$ 
\item If $b\in\mathbb{T}$ then $h(0)=0,$ $$h(\varphi_t(z))=h(z)+t$$ for $t\geq0,$ $z\in\mathbb{D}$ and $$h'(z)G(z)=1,$$ $z\in\mathbb{D}.$
\end{itemize} 
\end{itemize}

%
%


When the semigroup of analytic functions does not induce a strongly continuous semigroup of composition operators on $X$ we introduce the maximal closed linear subspace of $X$ such that the semigroup $(\varphi_t)$ generates a strongly continuous semigroup of operators on it, denoted by $[\varphi_t,X],$ that is, $$[\varphi_t,X]=\{f\in X:\|f\circ\varphi_t-f\|_X\to0\text{ as }t\to0\}.$$ In \cite{BCDMPS} the authors prove the relation between the subspace $[\varphi_t,X]$ and the infinitesimal  generator of $(\varphi_t).$

\begin{thmx}\label{maxsubspace}
Let $(\varphi_t)$ be a semigroup with generator $G$ and $X$ a Banach space of analytic functions which contains the constant functions and such that $M=\sup_{t\in[0,1]}\|C_t\|_X<\infty.$ Then, $$[\varphi_t,X]=\overline{\{f\in X:Gf'\in X\}}.$$
\end{thmx}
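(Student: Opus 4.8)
The plan is to recognize $[\varphi_t,X]$ as the maximal subspace on which $(C_t)$ restricts to a strongly continuous (i.e.\ $C_0$-) semigroup, and then to identify the infinitesimal generator of that restricted semigroup with the operator $f\mapsto Gf'$. I would first record two preliminaries. Set $Y:=[\varphi_t,X]$. (i) $Y$ is a closed linear subspace of $X$ that is invariant under every $C_r$: closedness follows from the uniform bound $M=\sup_{t\in[0,1]}\|C_t\|_X$ by a routine $\varepsilon$-estimate, and invariance from $\|C_tC_rf-C_rf\|_X=\|C_r(C_tf-f)\|_X\le\|C_r\|\,\|C_tf-f\|_X\to0$. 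Hence $(C_t|_Y)_{t\ge0}$ is a genuine $C_0$-semigroup on the Banach space $Y$, with some generator $\Gamma$ whose domain $D(\Gamma)$ is dense in $Y$ by general semigroup theory. (ii) Since $X$ is a Banach space of analytic functions, point evaluations $f\mapsto f(z)$ are bounded, so norm convergence in $X$ forces convergence uniformly on compact subsets of $\mathbb{D}$; by Cauchy estimates the same holds for $f\mapsto f'(z)$.

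For the inclusion $\overline{\{f\in X:Gf'\in X\}}\subseteq Y$ it suffices, $Y$ being closed, to show $\{f:Gf'\in X\}\subseteq Y$. Given such an $f$, the fundamental theorem of calculus applied to $s\mapsto f(\varphi_s(z))$, combined with the identity $\partial_s\varphi_s(z)=G(\varphi_s(z))$ from the background section, yields the pointwise formula $f(\varphi_t(z))-f(z)=\int_0^t (Gf')(\varphi_s(z))\,ds$, that is $C_tf-f=\int_0^t C_s(Gf')\,ds$. Approximating this integral by Riemann sums, which converge uniformly on compacta and each of whose $X$-norms is $\le tM\|Gf'\|_X$ by the uniform bound, and invoking the lower semicontinuity of $\|\cdot\|_X$ with respect to uniform convergence on compacta, I would conclude $\|C_tf-f\|_X\le tM\|Gf'\|_X\to0$, so $f\in Y$.

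For the reverse inclusion $Y\subseteq\overline{\{f:Gf'\in X\}}$ I would first identify the generator. If $g\in D(\Gamma)$, then $(C_tg-g)/t\to\Gamma g$ in $X$-norm, hence pointwise by (ii); but pointwise $(g(\varphi_t(z))-g(z))/t\to g'(z)G(z)$, using $\varphi_t(z)\to z$ and $(\varphi_t(z)-z)/t\to G(z)$. Therefore $\Gamma g=Gg'$, and in particular $Gg'=\Gamma g\in Y\subseteq X$, so $D(\Gamma)\subseteq\{f\in X:Gf'\in X\}$. For an arbitrary $f\in Y$, the averages $f_h:=\frac1h\int_0^h C_sf\,ds$ belong to $D(\Gamma)$ and converge to $f$ in $Y$ as $h\to0^+$ (the latter by strong continuity on $Y$); hence $f\in\overline{D(\Gamma)}\subseteq\overline{\{f:Gf'\in X\}}$. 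Combining the two inclusions gives the claimed equality.

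I expect the genuinely delicate point to be the two places where norm control and compact-open control must be reconciled: the norm estimate for the vector-valued integral in the first inclusion, which rests on a Fatou-type property of $\|\cdot\|_X$ (note that $s\mapsto C_s(Gf')$ need not be norm-continuous, since we do not know \emph{a priori} that $Gf'\in Y$, so the integral cannot simply be treated as a continuous $X$-valued Riemann integral), and the identification $\Gamma g=Gg'$ in the second inclusion, which rests on the boundedness of point and derivative evaluations. Everything else is standard $C_0$-semigroup theory; these topological compatibility issues are precisely what must be checked carefully for the abstract hypothesis ``Banach space of analytic functions'' to suffice.
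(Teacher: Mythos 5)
First, a remark on the ground rules of the comparison: the paper itself contains \emph{no} proof of this statement. It is quoted as background (Theorem~A) from \cite{BCDMPS}, so there is nothing internal to compare against; what follows measures your argument against the standard proof in that literature, whose architecture yours reproduces: show $Y:=[\varphi_t,X]$ is closed and $C_t$-invariant so that $(C_t|_Y)$ is a $C_0$-semigroup; identify its generator with $f\mapsto Gf'$ using boundedness of point evaluations (plus Banach--Steinhaus to upgrade norm convergence to convergence uniform on compacta); deduce $Y\subseteq\overline{\{f\in X: Gf'\in X\}}$ from density of the generator's domain; and prove the reverse inclusion from the pointwise identity $C_tf-f=\int_0^t C_s(Gf')\,ds$. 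All of these steps are correct as you present them, with the caveat (harmless, but worth stating) that bounded point evaluations is itself an assumption hidden in the phrase ``Banach space of analytic functions'': the quoted statement never says it, and this paper elsewhere lists it as a separate hypothesis when it is needed.

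The genuine gap is the step you flag but then assert: ``invoking the lower semicontinuity of $\|\cdot\|_X$ with respect to uniform convergence on compacta'' to pass from $\|S_n\|_X\le tM\|Gf'\|_X$ for the Riemann sums $S_n$ to $\|C_tf-f\|_X\le tM\|Gf'\|_X$. This Fatou-type property is \emph{not} a consequence of the stated hypotheses, even granting bounded point evaluations, and even when the limit function is known to lie in $X$. Concretely, let $\Lambda$ be a weak-star cluster point in $(H^\infty)^*$ of the evaluations $\delta_{r_k}$ with $r_k\uparrow 1$, and renorm $H^\infty$ by $\|f\|'=\|f\|_\infty+2|\Lambda(f)|$. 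This is a Banach space of analytic functions containing the constants, with bounded point evaluations, on which \emph{every} composition operator has norm at most $3$ (so $M<\infty$ for any semigroup); yet $h_n=(1-z^n)/2$ satisfies $\|h_n\|'=1$, $h_n\to 1/2$ uniformly on compact subsets, and $\|1/2\|'=3/2$. So the principle you invoke fails in a space satisfying all the listed hypotheses. (Since the conclusion of Theorem~A is invariant under equivalent renorming, this does not produce a counterexample to the theorem itself; it shows only that your argument for the inclusion $\{f: Gf'\in X\}\subseteq[\varphi_t,X]$ does not follow from the hypotheses as stated. Any complete proof must either add the Fatou property to the assumptions on $X$ --- which is in effect what the standing assumptions of \cite{BCDMPS} do --- or argue differently.)

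The defect is easy to repair in every case where this paper actually uses Theorem~A. For $X=H(p,q,\alpha)$, $H(p,\infty,\alpha)$ or $H(p,0,\alpha)$, the quantity $M_p(r,f)$ is, for fixed $r$, continuous with respect to uniform convergence on compact subsets; hence the norm (a supremum over $r$ when $q=\infty$, an integral in $r$ handled by Fatou's lemma when $q<\infty$) is lower semicontinuous in exactly the sense you need, including for limits that remain in the little-oh space. Adding this one observation makes your proof complete for all the applications in the paper; without it, the proof establishes the theorem only for spaces with the Fatou property, not in the stated generality.
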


Another useful characterization of $[\varphi_t,X]$ uses the integral operator. Given a semigroup $(\varphi_t)$ with generator $G$ and Denjoy-Wolff point $b,$ we define the function $\gamma:\mathbb{D}\to\mathbb{C},$ called \textit{the associated g-symbol of} $(\varphi_t),$ as $$\gamma(z)=\int_b^z\frac{\zeta-b}{G(\zeta)}\,d\zeta$$ for $b\in\mathbb{D}$ and $$\gamma(z)=\int_0^z\frac{1}{G(\zeta)}\,d\zeta$$ for $b\in\partial\mathbb{D}.$

\begin{propx}\label{maximal}
Let $(\varphi_t)$ be a semigroup with associated g-symbol $\gamma.$ Let $X$ be a Banach space of analytic functions with the properties:
\begin{enumerate}[(i)]
\item $X$ contains the constant functions;
\item For each $b\in\mathbb{D},$ $f\in X\Leftrightarrow\frac{f(z)-f(b)}{z-b}\in X$;
\item If $(C_t)$ is the induced semigroup on $X$ then $\sup_{t\in[0,1]}\|C_t\|<\infty.$
\end{enumerate}
Then $$[\varphi_t, X]=\overline{X\cap(T_\gamma(X)\oplus \mathbb{C})}.$$
\end{propx}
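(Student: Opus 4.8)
The plan is to connect the two characterizations of $[\varphi_t,X]$ — the one in Theorem~\ref{maxsubspace} in terms of $Gf'$, and the desired one in terms of the integral operator $T_\gamma$. The key observation is that the associated $g$-symbol $\gamma$ is defined precisely so that its derivative inverts the multiplier $G$: from $\gamma(z)=\int_b^z\frac{\zeta-b}{G(\zeta)}\,d\zeta$ we get $\gamma'(z)=\frac{z-b}{G(z)}$ when $b\in\mathbb D$, and $\gamma'(z)=\frac{1}{G(z)}$ when $b\in\partial\mathbb D$. The operator $T_\gamma$ satisfies $(T_\gamma f)'(z)=f(z)\gamma'(z)$, so $T_\gamma$ is (up to the constant of integration) a right inverse of the operation $f\mapsto Gf'$. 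I would set out to show that the set $\{f\in X:Gf'\in X\}$ and the set $X\cap(T_\gamma(X)\oplus\mathbb C)$ coincide, or at least have the same closure in $X$; then Theorem~\ref{maxsubspace} finishes the proof.

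First I would treat the interior case $b\in\mathbb D$. Suppose $f\in X$ with $Gf'\in X$, and set $g=Gf'$. The relation $\gamma'=\frac{z-b}{G}$ gives $f'(z)=\frac{g(z)}{G(z)}=\frac{g(z)\gamma'(z)}{z-b}$, so $f'(z)(z-b)=g(z)\gamma'(z)=(T_\gamma g)'(z)$. Integrating, $f(z)(z-b)$ and $T_\gamma g$ differ by a constant, and writing $\tilde g(z)=\frac{g(z)-g(b)}{z-b}$ one can rearrange this into an identity expressing $f$ as $T_\gamma(\text{something in }X)$ plus a constant. This is exactly the point where hypothesis~(ii) enters: it guarantees that the division by $z-b$ (or multiplication, in the reverse direction) keeps us inside $X$, so the manipulations moving between $f$, $g=Gf'$, $\tilde g$, and their images under $T_\gamma$ all stay within the space. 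Conversely, if $f\in X\cap(T_\gamma(X)\oplus\mathbb C)$, say $f=T_\gamma h+c$ with $h\in X$, then $f'=h\gamma'$, whence $Gf'=h\cdot G\gamma'=h(z-b)\in X$ by~(ii) again. So the two sets agree up to the bookkeeping forced by the constant $g(b)$ and the factor $z-b$, and the closures coincide.

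For the boundary case $b\in\partial\mathbb D$ the argument is the same but cleaner, since $\gamma'=1/G$ and hence $(T_\gamma f)'=f/G$; then $Gf'\in X$ is equivalent to $f'=(Gf')\gamma'=(T_\gamma(Gf'))'$, so $f-T_\gamma(Gf')$ is constant and $f\in T_\gamma(X)\oplus\mathbb C$ directly, with no division by $z-b$ required. In both cases one must check that constants behave correctly — here hypothesis~(i) ensures $\mathbb C\subseteq X$ so that adding the constant of integration is legitimate and the $\oplus\mathbb C$ in the statement is meaningful — and that the identity $G\gamma'(z)=z-b$ (resp.\ $G\gamma'=1$) holds, which is immediate from the definition of $\gamma$.

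The main obstacle I anticipate is not the algebra of the differential identities but the interchange of the two set-equalities with the closure operation, together with the careful handling of the constant terms. Theorem~\ref{maxsubspace} gives $[\varphi_t,X]=\overline{\{f:Gf'\in X\}}$, so I do not need the \emph{unclosed} sets $\{f:Gf'\in X\}$ and $X\cap(T_\gamma(X)\oplus\mathbb C)$ to be literally equal; it suffices that each is contained in the closure of the other, which is where the pointwise-to-norm passage and hypothesis~(iii) (giving a uniform bound on $\|C_t\|$, needed to invoke Theorem~\ref{maxsubspace} at all) do the real work. The subtlety is that $T_\gamma$ need not map $X$ boundedly into $X$, so $T_\gamma(X)$ is an abstract image, not the range of a bounded operator, and one must verify that intersecting with $X$ and then closing produces exactly $\overline{\{f:Gf'\in X\}}$; I expect this to require showing that a function with $Gf'\in X$ can be approximated in $X$-norm by functions lying in $X\cap T_\gamma(X)$, and vice versa, using the explicit formulas above rather than any continuity of $T_\gamma$.
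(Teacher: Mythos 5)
The paper never proves this proposition: it is quoted verbatim from \cite{BCDMPS} (hence its ``letter'' numbering), so there is no in-paper proof to compare against, and your argument has to stand on its own. Your strategy is certainly the intended one: use Theorem~\ref{maxsubspace} and show that $\{f\in X: Gf'\in X\}$ and $X\cap(T_\gamma(X)\oplus\mathbb{C})$ have the same closure, via the identities $G\gamma'=z-b$ (for $b\in\mathbb{D}$) and $G\gamma'=1$ (for $b\in\partial\mathbb{D}$). Your boundary case and your converse inclusion in the interior case ($f=T_\gamma h+c\Rightarrow Gf'=(z-b)h\in X$ by (ii)) are correct.

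The gap is in the forward inclusion when $b\in\mathbb{D}$. First, the displayed manipulation is wrong: from $f'(z)(z-b)=(T_\gamma g)'(z)$ you cannot conclude that $f(z)(z-b)$ and $T_\gamma g$ differ by a constant, since $\frac{d}{dz}\bigl[f(z)(z-b)\bigr]=f'(z)(z-b)+f(z)$, not $f'(z)(z-b)$. Second, and more seriously, the repair you gesture at --- expressing $f$ as $T_\gamma(\tilde g)+c$ with $\tilde g=(g(z)-g(b))/(z-b)\in X$ --- works \emph{only if} $g(b)=0$, a fact you never establish and which is the crux of the interior case: one computes $(T_\gamma\tilde g)'=\tilde g\,\gamma'=(g-g(b))/G=f'-g(b)/G$, so $f-T_\gamma(\tilde g)$ is constant precisely when $g(b)=0$; and if $g(b)\neq0$ no $h\in X$ with $h\gamma'=f'$ can exist at all, since it would have to be $g/(z-b)$, which is not even analytic. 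The missing one-line observation is that $b$ is the Denjoy--Wolff point, so $G(z)=(\bar bz-1)(z-b)P(z)$ vanishes at $b$, whence $g(b)=G(b)f'(b)=0$ automatically. With that in hand, $\tilde g=g/(z-b)\in X$ by (ii), $(T_\gamma\tilde g)'=f'$, and $f=T_\gamma(\tilde g)+f(0)$, so the two sets are \emph{literally equal}; the closure/density concerns and the worry about unboundedness of $T_\gamma$ in your final paragraph then evaporate, and Theorem~\ref{maxsubspace} (whose hypotheses are exactly (i) and (iii)) finishes the proof.
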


Since we can write the generator function $G$ as $G(z)=(\overline{b}z-1)(z-b)P(z),$ $z\in\mathbb{D},$ with $\text{Re } P\geq0,$ and by composition with an automorphism of the disk, if $b\in\mathbb{D}$ we can assume $b=0,$ so $$\gamma(z)=\int_0^z\frac{\zeta}{G(\zeta)}d\zeta=-\int_0^z\frac{1}{P(\zeta)}d\zeta.$$ If $b\in\partial\mathbb{D},$ without loss of generality we can take $b=1,$ so $G(z)=(1-z)^2P(z)$ and $$\gamma(z)=\int_0^z\frac{1}{(1-\zeta)^2P(\zeta)}d\zeta.$$ From now on we assume that the Denjoy-Wolff point of the semigroup is $b=0,$ if $b\in\mathbb{D},$ or $b=1,$ if $b\in\mathbb{T}.$

%


%

\section{Mixed norm spaces}

%
%
%
Every space $H(p,q,\alpha)$ is complete and, for $p,q\geq 1$, it is a Banach space with the given norm (for $p,q<1$ they are quasi-Banach spaces). Some properties of the functions in these spaces that we will need later on are the following (see \cite{Ar}).

\begin{propx}\label{growth}
Let $0<p,q\leq\infty$ and $0<\alpha<\infty.$ If $f\in H(p,q,\alpha)$ then there exists $C>0$ such that for every $z\in\mathbb{D},$ 
$$|f(z)|\leq \frac{C\|f\|_{p,q,\alpha}}{(1-|z|)^{\alpha+\frac{1}{p}}}.$$
Moreover, for every $z\in\mathbb{D}$ the function $$f_z(w)=\frac{(1-|z|^2)^{\alpha+\frac{1}{p}}}{(1-\bar{z}w)^{2(\alpha+\frac{1}{p})}}$$ belongs to $ H(p,0,\alpha)$ (and therefore $f_z\in H(p,q,\alpha)$ for $0<q\leq\infty$), $\|f_z\|_{p,q,\alpha}\approx 1$ and $|f_z(z)|=\frac{1}{(1-|z|^2)^{\alpha+\frac{1}{p}}}.$

\end{propx}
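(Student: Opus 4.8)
The plan is to prove Proposition~\ref{growth} in two stages: first the pointwise growth estimate, then the properties of the test functions $f_z$.

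\medskip

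\noindent\textbf{The growth estimate.}
First I would establish the bound $|f(z)|\le C\|f\|_{p,q,\alpha}(1-|z|)^{-(\alpha+1/p)}$. The natural route is to combine a standard pointwise estimate in terms of the integral mean with the definition of the norm. Recall the elementary fact that for an analytic $f$ and any $0<\rho<1$, one has
$$
|f(w)|\le \frac{C}{(1-\rho)^{1/p}}\, M_p(\rho',f)
$$
whenever $|w|\le \rho<\rho'<1$, obtained by applying the subharmonicity of $|f|^p$ (the mean value property on a disk of radius $\sim(1-\rho)$ around $w$) together with Jensen's inequality; this is the classical way one passes from an $M_p$ control to a pointwise control, and it loses exactly the factor $(1-\rho)^{-1/p}$. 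So the first step is to fix $z$, set $r=|z|$, and choose an intermediate radius, say $R=(1+r)/2$, so that $1-R\approx 1-r$. Applying the above with the disk of radius $(1-r)/4$ centered at $z$ gives
$$
|f(z)|\le \frac{C}{(1-r)^{1/p}}\,M_p(R,f).
$$
It then remains to bound $M_p(R,f)$ by $\|f\|_{p,q,\alpha}(1-R)^{-\alpha}$, i.e.\ to recover the $(1-r)^{-\alpha}$ factor from the $q$-integral (or the supremum) defining the norm. For $q=\infty$ this is immediate from the definition. For $q<\infty$, since $r\mapsto M_p(r,f)$ is nondecreasing, on the interval $[R,(1+R)/2]$ the mean is at least $M_p(R,f)$, and the weight $(1-r)^{\alpha q-1}$ is comparable to $(1-R)^{\alpha q-1}$ there; integrating the lower bound for the norm over this interval of length $\approx 1-R$ yields
$$
\|f\|_{p,q,\alpha}^q\ge C\,(1-R)^{\alpha q}M_p^q(R,f),
$$
and hence $M_p(R,f)\le C\|f\|_{p,q,\alpha}(1-R)^{-\alpha}$. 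Combining the two displayed bounds and using $1-R\approx 1-r$ finishes this part.

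\medskip

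\noindent\textbf{The test functions.}
For the second claim I would compute (or estimate) $M_p(r,f_z)$ for the specific $f_z(w)=(1-|z|^2)^{\alpha+1/p}(1-\bar z w)^{-2(\alpha+1/p)}$. Writing $s=\alpha+1/p$, the key is the classical asymptotic for integral means of the kernel $(1-\bar z w)^{-2s}$: for $2s>1$,
$$
\int_0^{2\pi}\frac{d\theta}{|1-\bar z re^{i\theta}|^{2s\cdot p}}\approx \frac{1}{(1-r|z|)^{2sp-1}},
$$
which is a standard estimate (a Forelli--Rudin type integral). From this one reads off that $(1-r)^\alpha M_p(r,f_z)$ stays bounded uniformly in $r$, and in fact tends to $0$ as $r\to1^-$ because the factor $(1-|z|^2)^{s}$ in front is fixed while the worst growth of $M_p(r,f_z)$ is $(1-r)^{-s}$ only when $r|z|\to 1$, which for fixed $z$ does not happen; a short computation with the explicit exponents confirms the limit is $0$, giving $f_z\in H(p,0,\alpha)$. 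The value $|f_z(z)|=(1-|z|^2)^{-s}$ is a direct substitution. The norm comparability $\|f_z\|_{p,q,\alpha}\approx 1$ with constants independent of $z$ then follows by inserting the mean estimate into the definition of the norm and carrying out the resulting $r$-integral; the point is that the $z$-dependence cancels, leaving a universal constant.

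\medskip

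\noindent\textbf{Main obstacle.}
The routine but genuinely delicate point is the uniformity in $z$ of the norm estimate $\|f_z\|_{p,q,\alpha}\approx 1$: one must verify that after plugging the Forelli--Rudin bound for $M_p(r,f_z)$ into the weighted $q$-integral, the resulting integral in $r$ produces a quantity comparable to $(1-|z|^2)^{-sq}\cdot(1-|z|^2)^{sq}=1$ with constants not depending on $z$. Tracking the exponents carefully through the change of variables and confirming that both the upper and lower comparison constants are $z$-independent is where the real care lies; the growth estimate itself and the evaluation $|f_z(z)|$ are standard. Since this statement is quoted from \cite{Ar}, I would present the argument in the above outline and refer to that source for the full exponent bookkeeping.
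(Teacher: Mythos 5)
The paper never proves this proposition: it is stated as background (Proposition~\ref{growth}) and quoted directly from \cite{Ar}, so there is no internal proof to compare yours against. Judged on its own, your outline is correct and follows the standard route: the sub-mean-value property of $|f|^p$ on a disk of radius comparable to $1-|z|$, combined with the monotonicity of $r\mapsto M_p(r,f)$ to extract the factor $(1-R)^{-\alpha}$ from the $q$-integral, gives the growth estimate; and Forelli--Rudin asymptotics control $M_p(r,f_z)$ for the test functions.

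Two remarks that would tighten it. First, the ``main obstacle'' you single out, the $z$-uniform lower bound in $\|f_z\|_{p,q,\alpha}\approx 1$, is in fact a one-line consequence of the growth estimate you have already proved: applying $|g(z)|\leq C\|g\|_{p,q,\alpha}(1-|z|)^{-(\alpha+\frac{1}{p})}$ to $g=f_z$ at the point $w=z$ gives $\|f_z\|_{p,q,\alpha}\geq C^{-1}(1-|z|)^{\alpha+\frac{1}{p}}(1-|z|^2)^{-(\alpha+\frac{1}{p})}\geq C^{-1}2^{-(\alpha+\frac{1}{p})}$, uniformly in $z$. Thus only the \emph{upper} bound needs the Forelli--Rudin estimate, and that direction involves no cancellation, only the elementary inequality $1-r|z|\geq\max\{1-r,\,1-|z|\}$; the exponent bookkeeping you defer to \cite{Ar} essentially disappears. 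Second, membership of $f_z$ in $H(p,0,\alpha)$ is immediate because $f_z$ is bounded on $\mathbb{D}$ for each fixed $z$ (since $|1-\bar z w|\geq 1-|z|>0$), so $(1-r)^\alpha M_p(r,f_z)\to 0$ trivially. Two small slips to fix: the condition for the Forelli--Rudin asymptotic $\int_0^{2\pi}|1-\bar z re^{i\theta}|^{-2sp}\,d\theta\approx(1-r|z|)^{1-2sp}$ is $2sp>1$ (not $2s>1$), which here reads $2\alpha p+2>1$ and always holds; and the case $p=\infty$ should be handled separately by the identity $M_\infty(r,f_z)=(1-|z|^2)^{\alpha}(1-r|z|)^{-2\alpha}$ rather than by an integral estimate.
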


In particular, if we denote the point-evaluation functional as $\delta_z,$ $\delta_z(f)=f(z)$, then for every $z\in \mathbb{D}$ $$\|\delta_z\|\approx|f_z(z)|=\frac{1}{(1-|z|)^{\alpha+\frac{1}{p}}}.$$

We have the following results for these spaces \cite[Proposition 7.1.3.]{JVA}:
\begin{propx}	\label{Sarason} For $0\leq r<1$, let $f_r(z)=f(rz),$ $z\in\mathbb{D}.$
\begin{itemize}
\item If $f\in H(p,q,\alpha),$ $0<p\leq\infty,$ $0<q,\alpha<\infty,$ then $\|f_r-f\|_{p,q,\alpha}\to0,$ as $r\to1.$
\item If $f\in H(p,0,\alpha),$ $0<p\leq\infty,$ $0<\alpha<\infty,$ then $\|f_r-f\|_{p,\infty,\alpha}\to0,$ as $r\to1.$ 
\end{itemize}
Moreover, if $f\in H(p,\infty,\alpha)$ and $\|f_r-f\|_{p,\infty,\alpha}\to0,$ as $r\to1,$ then $f\in H(p,0,\alpha).$
\end{propx}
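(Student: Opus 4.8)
The plan rests on the elementary identity
$$M_p(\rho,f_r)=M_p(r\rho,f),\qquad 0\le\rho<1,$$
which holds because $f_r(\rho e^{i\theta})=f(r\rho e^{i\theta})$ and $M_p$ depends only on the radius. Two further standard facts will be used throughout: the map $\rho\mapsto M_p(\rho,f)$ is non-decreasing (subharmonicity of $|f|^p$, the maximum principle if $p=\infty$), and there is a constant $c_p$ depending only on $p$ with $M_p(\rho,g+h)\le c_p\big(M_p(\rho,g)+M_p(\rho,h)\big)$ (here $c_p=1$ when $p\ge1$, and the quasi-triangle inequality gives the general case when $0<p<1$). Combining the identity with monotonicity yields $M_p(\rho,f_r)=M_p(r\rho,f)\le M_p(\rho,f)$, the domination that drives the integral estimate.

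For the first assertion I would argue by dominated convergence on
$$\|f_r-f\|_{p,q,\alpha}^q=\alpha q\int_0^1(1-\rho)^{\alpha q-1}M_p^q(\rho,f_r-f)\,d\rho.$$
For each fixed $\rho\in(0,1)$ the integrand tends to $0$: since $f$ is uniformly continuous on $\{|z|\le\rho\}$ and $r\rho\to\rho$, one has $M_p(\rho,f_r-f)\to0$. For the domination, the quasi-triangle inequality together with $M_p(\rho,f_r)\le M_p(\rho,f)$ gives $M_p(\rho,f_r-f)\le 2c_p\,M_p(\rho,f)$, so
$$(1-\rho)^{\alpha q-1}M_p^q(\rho,f_r-f)\le(2c_p)^q(1-\rho)^{\alpha q-1}M_p^q(\rho,f),$$
and the right-hand side is integrable because $f\in H(p,q,\alpha)$. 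Dominated convergence then yields $\|f_r-f\|_{p,q,\alpha}\to0$. This part is routine.

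The second assertion needs a splitting of the supremum, and this is the delicate point. Fix $\e>0$. Since $f\in H(p,0,\alpha)$, choose $\rho_0<1$ with $(1-s)^\alpha M_p(s,f)<\e$ for all $s\ge\rho_0$, and set the threshold $\rho_1=\sqrt{\rho_0}$. For $\rho\ge\rho_1$ and $r\ge\rho_1$ one has $r\rho\ge\rho_0$, so using $1-\rho\le1-r\rho$,
$$(1-\rho)^\alpha M_p(\rho,f_r)=(1-\rho)^\alpha M_p(r\rho,f)\le(1-r\rho)^\alpha M_p(r\rho,f)<\e,$$
while $(1-\rho)^\alpha M_p(\rho,f)<\e$ directly, whence $(1-\rho)^\alpha M_p(\rho,f_r-f)\le 2c_p\e$ on this range. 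For $\rho\le\rho_1$ the weight is bounded by $1$ and $\sup_{|z|\le\rho_1}|f(rz)-f(z)|\to0$ as $r\to1$ by uniform continuity on the compact disk, which controls $\sup_{\rho\le\rho_1}(1-\rho)^\alpha M_p(\rho,f_r-f)$. Taking $r$ close enough to $1$ makes both pieces small, so $\|f_r-f\|_{p,\infty,\alpha}\to0$. The main obstacle is precisely that the naive choice $\rho_0$ forces $r\rho\ge\rho_0$ only for $r\ge1$; passing to $\sqrt{\rho_0}$ is what makes the tail estimate uniform in $r$.

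Finally, for the ``moreover'' part I would first note that each dilate $f_r$ lies in $H(p,0,\alpha)$: as $f_r$ extends analytically past $\overline{\mathbb{D}}$ it is bounded there, so $(1-\rho)^\alpha M_p(\rho,f_r)\le(1-\rho)^\alpha\|f_r\|_\infty\to0$. It then suffices to know that $H(p,0,\alpha)$ is closed in $H(p,\infty,\alpha)$: if $g_n\in H(p,0,\alpha)$ and $\|g_n-g\|_{p,\infty,\alpha}\to0$, then for any $\e$ and suitable $n$ one has $(1-\rho)^\alpha M_p(\rho,g)\le c_p\big(\|g-g_n\|_{p,\infty,\alpha}+(1-\rho)^\alpha M_p(\rho,g_n)\big)$, whence $\limsup_{\rho\to1}(1-\rho)^\alpha M_p(\rho,g)\le c_p\e$ and thus $g\in H(p,0,\alpha)$. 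Since by hypothesis $f=\lim_r f_r$ in $H(p,\infty,\alpha)$ and each $f_r\in H(p,0,\alpha)$, closedness gives $f\in H(p,0,\alpha)$.
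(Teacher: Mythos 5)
Your proof is correct in all three parts. One thing you could not have known: the paper does not prove this proposition at all --- it is stated as a quoted background result (Proposition~B) with a citation to \cite[Proposition 7.1.3]{JVA}, so there is no proof in the paper to compare yours against. Your argument is the natural self-contained one, and the details are sound: the identity $M_p(\rho,f_r)=M_p(r\rho,f)$ plus monotonicity of $\rho\mapsto M_p(\rho,f)$ yields the domination $M_p(\rho,f_r-f)\le 2c_p\,M_p(\rho,f)$ that makes dominated convergence work for $q<\infty$ (including $p=\infty$); in the $H(p,0,\alpha)$ case the threshold $\rho_1=\sqrt{\rho_0}$ is exactly what is needed so that $r\rho\ge\rho_0$ holds for all $r,\rho\ge\rho_1$, making the tail estimate $(1-\rho)^\alpha M_p(r\rho,f)\le(1-r\rho)^\alpha M_p(r\rho,f)<\varepsilon$ uniform in $r$; and the ``moreover'' part correctly combines the two facts that each dilate $f_r$ lies in $H(p,0,\alpha)$ (being bounded on $\mathbb{D}$) and that $H(p,0,\alpha)$ is closed in $H(p,\infty,\alpha)$. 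The quasi-norm constant $c_p$ for $0<p<1$ is also handled properly throughout, so the argument covers the full stated range of parameters.
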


A first consequence of Proposition \ref{Sarason} is that polynomials are dense in $H(p,q,\alpha),$ $0<~p\leq\infty,$ $0<q,\alpha<\infty$ and $H(p,0,\alpha),$ $0<p\leq\infty,$ $0<\alpha<\infty.$ The closure in $H(p,\infty,\alpha)$ of the set of all analytic polynomials is $H(p,0,\alpha).$
This result  can be also written in terms of semigroups of composition operators on mixed norm spaces. Namely, if we define $\varphi_t(z)=e^{-t}z,$ for all $t\geq 0$ and $z\in\mathbb{D}$, then $(\varphi_t)$ induces a strongly continuous semigroup of composition operators on $H(p,q,\alpha)$ for $q<\infty$ and $$[\varphi_t,H(p,\infty,\alpha)]=H(p,0,\alpha).$$ 

For our study of semigroups of composition operators, the first thing we need to check is that those operators are bounded on the mixed norm spaces.

\begin{prop}\label{CO} Suppose $0< p,q\leq\infty$ and $0<\alpha<\infty,$ and let $\varphi:\mathbb{D}\to\mathbb{D}$ be an analytic function. Then $C_\varphi$ is bounded on $H(p,q,\alpha)$ and on $H(p,0,\alpha).$ Moreover, it holds that
$$\|C_\varphi\|\lesssim\left(\frac{\|\varphi\|_\infty+|\varphi(0)|}{\|\varphi\|_\infty-|\varphi(0)|}\right)^{\alpha+\frac{1}{p}}.$$ If, in addition,  $\varphi(0)=0$, then $\|C_\varphi\|=1$.
\end{prop}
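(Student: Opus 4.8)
The plan is to normalize by the value $\varphi(0)$, handle the case $\varphi(0)=0$ by subordination, and reduce the general case to a single disc automorphism, whose composition operator is the real point. First I would record the easy case $\varphi(0)=0$. Littlewood's subordination principle gives, for any analytic $\phi:\mathbb{D}\to\mathbb{D}$ with $\phi(0)=0$, that $M_p(r,f\circ\phi)\le M_p(r,f)$ for all $0<p\le\infty$ and $0\le r<1$; this comes from the mean-value property applied to the composition of the subharmonic function $|f|^p$ with $\phi$, together with $\phi(\{|z|\le r\})\subseteq\{|w|\le r\}$ (Schwarz). Substituting this into the definitions of $\|\cdot\|_{p,q,\alpha}$ and $\|\cdot\|_{p,\infty,\alpha}$ yields $\|C_\phi f\|\le\|f\|$ in every $H(p,q,\alpha)$, and since $(1-r)^\alpha M_p(r,f\circ\phi)\le(1-r)^\alpha M_p(r,f)\to0$ the space $H(p,0,\alpha)$ is preserved as well, so $\|C_\phi\|\le 1$. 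As $C_\phi\mathbf 1=\mathbf 1$ and $\|\mathbf 1\|_{p,q,\alpha}=1$, this forces $\|C_\phi\|=1$, which is the last assertion of the proposition.

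Next I would reduce the general case. Write $a=\varphi(0)$ and $s=\|\varphi\|_\infty$; the constant case is trivial, so assume $\varphi$ is nonconstant, whence $|a|<s$ by the maximum principle. Factor $\varphi=d_s\circ\mu\circ\phi_0$, where $d_s(w)=sw$, where $\mu$ is the disc automorphism with $\mu(0)=a/s$, and where $\phi_0=\mu^{-1}(\varphi/s)$ satisfies $\phi_0(0)=0$ and maps $\mathbb{D}$ into $\mathbb{D}$. Then $C_\varphi=C_{\phi_0}C_\mu C_{d_s}$, and since $d_s(0)=\phi_0(0)=0$, the first step gives $\|C_{d_s}\|,\|C_{\phi_0}\|\le1$, hence $\|C_\varphi\|\le\|C_\mu\|$. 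Because rotations act isometrically on each $M_p(r,\cdot)$, I may moreover assume $\mu=\psi_c$ with $\psi_c(z)=\frac{c-z}{1-cz}$ and $c=|a|/s$. Everything thus reduces to proving $\|C_{\psi_c}\|\lesssim(\frac{1+c}{1-c})^{\alpha+1/p}$, which for $c=|a|/s$ is exactly the stated bound.

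The automorphism estimate is the heart of the matter. When $p=q$ it is immediate, since $H(p,p,\alpha)$ is a weighted Bergman space and the substitution $w=\psi_c(z)$ in the area integral, together with $1-|\psi_c(z)|^2=\frac{(1-c^2)(1-|z|^2)}{|1-cz|^2}$ and $|1-cz|\ge 1-c$, produces the bound at once. The genuine difficulty is the mixed case $p\ne q$, where there is no area integral and one must argue radius by radius. The tool I would use is a quantitative subordination estimate from harmonic majorants: with $R_\rho=\frac{\rho+c}{1+c\rho}$ (so that $\psi_c(\{|z|\le\rho\})\subseteq\{|w|\le R_\rho\}$) and $v$ the Poisson extension of $|f|^p$ from $\{|w|=R_\rho\}$, subharmonicity gives $|f\circ\psi_c|^p\le v\circ\psi_c$ on $|z|=\rho$, and the mean-value property of the harmonic function $v\circ\psi_c$ at $0$ yields $M_p^p(\rho,f\circ\psi_c)\le v(c)\le\frac{R_\rho+c}{R_\rho-c}\,M_p^p(R_\rho,f)$.

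The hard part is that the factor $\frac{R_\rho+c}{R_\rho-c}$ degenerates as $\rho\to0^+$, the image circle then shrinking to a point near $c$ that is far, in Poisson distance, from $\{|w|=R_\rho\}$; for $p\le q$ this estimate is not even integrable against the weight near the centre. I would circumvent this by splitting $\int_0^1=\int_0^{1/2}+\int_{1/2}^1$. On $[1/2,1)$ the factor $\frac{R_\rho+c}{R_\rho-c}$ is $\lesssim\frac{1}{1-c}$, and after the change of variables $R=R_\rho$, using $1-R_\rho=\frac{(1-\rho)(1-c)}{1+c\rho}$ and $1-cR\ge 1-c$, the weighted integral is dominated by $(\frac{1+c}{1-c})^{q(\alpha+1/p)}\|f\|^q$. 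On $[0,1/2]$ I would discard the $L^p$ averaging and instead use the pointwise growth of Proposition \ref{growth}: $M_p(\rho,f\circ\psi_c)\le M_\infty(R_\rho,f)\le C\|f\|/(1-R_\rho)^{\alpha+1/p}\lesssim \|f\|/(1-c)^{\alpha+1/p}$, whose contribution is harmless because the weight is integrable on $[0,1/2]$; the same two-piece argument handles $q=\infty$. Together these give the automorphism estimate with a constant of the right order. Finally, boundedness on $H(p,0,\alpha)$ follows because polynomials are dense there, $C_\varphi$ sends them into $H^\infty\subseteq H(p,0,\alpha)$, and $C_\varphi$ is bounded, so it preserves the closure.
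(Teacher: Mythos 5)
Your proposal is correct, and it reaches the bound by a genuinely different route than the paper. The paper (following Siskakis) never factors through an automorphism: it keeps the general $\varphi$ (normalized so that $\|\varphi\|_\infty=1$), applies Schwarz--Pick, and then replaces the sharp radius $\frac{|\varphi(0)|+r}{1+|\varphi(0)|r}$ by its \emph{affine} majorant $R=\frac{(1-|\varphi(0)|)r+2|\varphi(0)|}{1+|\varphi(0)|}$. That single linearization does exactly the work that costs you the most effort: since $R\ge\frac{2|\varphi(0)|}{1+|\varphi(0)|}$ for every $r$, the Harnack factor $\frac{R+|\varphi(0)|}{R-|\varphi(0)|}$ stays below $\frac{3+|\varphi(0)|}{1-|\varphi(0)|}$ uniformly down to $r=0$ (so there is no degeneration at the centre and no splitting of the integral), and since $r\mapsto R$ is affine, $(1-r)^{\alpha q-1}\,dr$ transforms exactly into $\bigl(\frac{1+|\varphi(0)|}{1-|\varphi(0)|}\bigr)^{\alpha q}(1-u)^{\alpha q-1}\,du$, with no case distinction on $\alpha q$. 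Your route instead reduces everything to the single automorphism $\psi_c$ via $C_\varphi=C_{\phi_0}C_\mu C_{d_s}$, keeps the sharp Schwarz--Pick radius $R_\rho=\frac{\rho+c}{1+c\rho}$, and repairs the resulting blow-up of the Harnack factor near $\rho=0$ by the split $\int_0^{1/2}+\int_{1/2}^1$ together with the growth estimate of Proposition \ref{growth} on the inner piece; this is sound, and your density argument for $H(p,0,\alpha)$ is a clean alternative to the paper's ``analogous'' re-estimation. One detail in your outer piece deserves care: bounding the Jacobian crudely by $d\rho\lesssim\frac{dR}{1-c}$ gives the exponent $\frac{q}{p}+1$ on $\frac{1}{1-c}$, which exceeds the claimed $q(\alpha+\frac{1}{p})$ when $\alpha q<1$; the correct grouping is $(1-\rho)^{\alpha q-1}\,d\rho=(1-\rho)^{\alpha q}\,\frac{d\rho}{1-\rho}$ together with $\frac{d\rho}{1-\rho}=\frac{(1-c)\,dR}{(1-cR)(1-R)}\le\frac{dR}{1-R}$, which uses precisely the two facts you cite and yields the exponent $\alpha q+\frac{q}{p}$ in all cases. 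In sum, the paper's linearization buys brevity and uniformity (one computation covering all $0<q\le\infty$); your factorization buys conceptual clarity, as the whole problem is visibly concentrated in one automorphism, at the price of the splitting and of Proposition \ref{growth} as an extra input.
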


\begin{proof}
If $\varphi(0)=0,$ then by Littlewood's Subordination Theorem (see \cite[Chapter 1]{Dur}) $M_p(r,f\circ 
\varphi)\leq M_p(r,f),$ and $\|C_\varphi\|\leq1.$ The constant function $f(z)\equiv 1$ shows the equality. 

If $\varphi(0)\neq0,$ we get the bound  arguing as in \cite[Lemma 1]{SisBergman}. Assume firstly that $||\varphi||_\infty=1$.
Fix $0<r<1.$ Applying 
the Schwarz-Pick Lemma, we have 
$$
|\varphi(z)|\leq\frac{|\varphi(0)|+r}{1+|\varphi(0)|r}.$$
Now, since 
$$\frac{|\varphi(0)|+ r}{1+|\varphi(0)|r}\leq\frac{(1-|\varphi(0)|)r+2|\varphi(0)|}{1+|\varphi(0)|}=R
$$ 
we have 
$|\varphi(z)|\leq R$ for $|z|\leq r.$ From here clearly $M_\infty(r,f\circ\varphi)\leq M_\infty(R,f).$
For $0<~p<~\infty$, let $u$ be the harmonic majorant of $|f|^p$ on $|z|\leq R$ such that $u=|f|^p$ on $|z|=R$. 
 Then $|f(z)|^p\leq u(z)$ on $|z|\leq R,$ and hence $|f(\varphi(z))|^p\leq u(\varphi(z))$ for $|z|\leq r.$ From here, 
 $$
 M_p^p(r,f\circ\varphi)=\int_0^{2\pi}|f(\varphi(re^{i\theta}))|^p\,\frac{d\theta}{2\pi}\leq\int_0^{2\pi}u(\varphi(re^{i\theta}))\,\frac{d\theta}{2\pi}=u(\varphi(0)).
 $$ 
 To simplify the notation, let us call 
 $$\frac{ R+|\varphi(0)|}{ R-|\varphi(0)|}=\psi(R),$$ 
 then, by Harnack's inequality $$u(\varphi(0))\leq\psi(R)\,u(0)=\psi(R)\int_0^{2\pi}|f(Re^{i\theta})|^p\,\frac{d\theta}{2\pi}.$$
Therefore, $$M_p(r,f\circ\varphi)\leq\psi(R)^\frac{1}{p}M_p(R,f)$$ for $p<\infty$ and $$M_\infty(r,f\circ\varphi)\leq M_\infty(R,f).$$
Using that $\psi$ is a decreasing function, we obtain
$$
\psi(u)=\frac{ u+|\varphi(0)|}{ u-|\varphi(0)|}\leq\frac{3+|\varphi(0)|}{1-|\varphi(0)|}
$$ for $u\in\left[\frac{2|\varphi(0)|}{1+|\varphi(0)|},1\right]$.

Now we can use these inequalities 
(assuming $1/\infty=0$ so $\psi(R)^{\frac{1}{\infty}}=1$) to bound the norm of $f\circ\varphi$ by the norm of $f$. For $q<\infty,$
\begin{align*}\|f\circ\varphi\|^q_{p,q,\alpha}&=\alpha q\int_0^1(1-r)^{\alpha q-1}\,M_p^q(r,f\circ\varphi)\,dr
\leq\alpha q\int_0^1(1-r)^{\alpha q-1}\,\psi(R)^{\frac{q}{p}}M_p^q(R,f)\,dr.\end{align*}
Next we use the change of variables 
$$
u=R=\frac{(1-|\varphi(0)|)r+2|\varphi(0)|}{1+|\varphi(0)|}
$$ 
getting 
$$
1-r=\frac{1+|\varphi(0)|}{1-|\varphi(0)|}(1-u) \quad \text{and}\quad dr=\frac{1+|\varphi(0)|}{1-|\varphi(0)|}du.
$$
Hence, the norm of the composition operator can be bounded as follows
\begin{align*}
\|f\circ\varphi\|^q_{p,q,\alpha}&\leq \left(\frac{1+|\varphi(0)|}{1-|\varphi(0)|}\right)^{\alpha q}\alpha q\int_{\frac{2|\varphi(0)|}{1+|\varphi(0)|}}^1(1-u)^{\alpha q-1}\,\psi(u)^{\frac{q}{p}}M_p^q(u,f)\,dr
\\&
\leq \left(\frac{1+|\varphi(0)|}{1-|\varphi(0)|}\right)^{\alpha q}\left(\frac{3+|\varphi(0)|}{1-|\varphi(0)|}\right)^{\frac{q}{p}}\alpha q\int_0^1(1-u)^{\alpha q-1}\,M_p^q(u,f)\,du
\\&
\leq C\left(\frac{1+|\varphi(0)|}{1-|\varphi(0)|}\right)^{\alpha q+\frac{q}{p}}\|f\|_{p,q,\alpha}^q.
\end{align*}
And for $H(p,\infty,\alpha),$ just like above,
\begin{align*}
\|f\circ\varphi\|_{p,\infty,\alpha}
&=
\sup_{0<r<1}(1-r)^\alpha M_p(r,f)
\leq\sup_{0<r<1}(1-r)^\alpha\psi(R)^{\frac{1}{p}}M_p(R,f)
\\&
=\left(\frac{1+|\varphi(0)|}{1-|\varphi(0)|}\right)^{\alpha}\sup_{\frac{2|\varphi(0)|}{1+|\varphi(0)|}<u<1}(1-u)^\alpha\left(\frac{ u+|\varphi(0)|}{u-|\varphi(0)|}\right)^{\frac{1}{p}}M_p(u,f)
\\&
\leq C\left(\frac{1+|\varphi(0)|}{1-|\varphi(0)|}\right)^{\alpha+\frac{1}{p}}\|f\|_{p,\infty,\alpha}.
\end{align*}

If $||\varphi||_\infty<1$, writing $U(z)=||\varphi||_\infty z$, then $C_\varphi=C_U\circ C_{\varphi/||\varphi||}$. Applying the previous two cases, we conclude the result. 

The proof for $H(p,0,\alpha)$ is analogous. 
\end{proof}

\section{Semigroups of composition operators on mixed norm spaces given by integrability}

Our first goal is to study the semigroups of composition operators on mixed norm spaces for $q<\infty$. We will see that these spaces behave as the Hardy and Bergman spaces studied in \cite{BerPor} and \cite{SisBergman}.

The next result will help us prove the strong continuity of semigroups of composition operators on Banach spaces of analytic functions where polynomials are dense. The ideas behind the next proposition are taken from \cite{SisSem}, we include its proof for the sake of completeness. 

\begin{prop}\label{general_theorem}  
Let $(\varphi_t)$ be a semigroup of analytic functions in the unit disk and let $X$ be a Banach space of analytic functions such that
\begin{itemize}
\item[(i)] Polynomials are dense in $X$;
\item[(ii)] There is a constant $C>0$ such that if $f$ and $g$ belong to $X$ and $|f|\leq |g|$, then  $\|f\|_X\leq C \|g\|_X$;
\item[(iii)] $M:=\limsup_{t\to 0^+}||C_t||<+\infty$;
\item[(iv)] $\lim_{t\to 0^+}||\varphi_t-\varphi_0||_X=0$.
\end{itemize}
Then the semigroup of operators $(C_t)$ is strongly continuous on $X$.
\end{prop}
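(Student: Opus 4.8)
The plan is to run a standard $\varepsilon/3$ density argument, reducing the problem first to polynomials and then to monomials, where conditions (ii) and (iv) combine to supply the decay.

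First I would exploit (iii) to fix local boundedness of the operators: since $\limsup_{t\to0^+}\|C_t\|=M<\infty$, there is $\delta>0$ with $\|C_t\|\leq M+1$ for all $0<t<\delta$. Then, given $f\in X$ and $\varepsilon>0$, by (i) I choose a polynomial $p$ with $\|f-p\|_X<\varepsilon$ and split, for $0<t<\delta$,
\begin{equation*}
\|C_tf-f\|_X\leq\|C_t(f-p)\|_X+\|C_tp-p\|_X+\|p-f\|_X\leq (M+2)\,\varepsilon+\|C_tp-p\|_X.
\end{equation*}
Hence it suffices to prove $\|C_tp-p\|_X\to0$ as $t\to0^+$ for every polynomial $p$, and by linearity it is enough to treat the monomials $p(z)=z^n$.

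For a monomial, note that $C_t(z^n)=\varphi_t^n$; since $z^n\in X$ and each $C_t$ is bounded, $\varphi_t^n\in X$, and likewise $\varphi_t=C_t(z)\in X$. Using the factorization $\varphi_t^n-z^n=(\varphi_t-z)\sum_{k=0}^{n-1}\varphi_t^k z^{n-1-k}$ together with $|\varphi_t|<1$ and $|z|<1$ on $\mathbb{D}$, I obtain the pointwise bound $|\varphi_t^n-z^n|\leq n\,|\varphi_t-z|$. Both $\varphi_t^n-z^n$ and $n(\varphi_t-z)=n(\varphi_t-\varphi_0)$ lie in $X$ (recall $\varphi_0$ is the identity), so condition (ii) yields
\begin{equation*}
\|C_t(z^n)-z^n\|_X=\|\varphi_t^n-z^n\|_X\leq C\,n\,\|\varphi_t-\varphi_0\|_X,
\end{equation*}
which tends to $0$ by (iv). Summing over the finitely many monomials appearing in $p$ gives $\|C_tp-p\|_X\to0$, and the opening estimate then yields $\limsup_{t\to0^+}\|C_tf-f\|_X\leq (M+2)\varepsilon$ for every $\varepsilon>0$, completing the argument.

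The point requiring the most care is the bookkeeping around condition (ii): before invoking the domination hypothesis one must verify that every function involved actually belongs to $X$, which holds because $\varphi_t^k=C_t(z^k)$ with $z^k$ a polynomial and $C_t$ bounded on $X$. The only genuinely quantitative input is the passage from the pointwise inequality $|\varphi_t^n-z^n|\leq n|\varphi_t-z|$ to the corresponding norm inequality, which is exactly the content of (ii). Given the hypotheses I do not anticipate a serious obstacle, since (iv) furnishes the decay, (ii) transfers the pointwise comparison to the norm, and (iii) provides the local uniform boundedness needed to close the $\varepsilon/3$ estimate.
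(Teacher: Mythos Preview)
Your proof is correct and follows essentially the same approach as the paper: both use the factorization $\varphi_t^n-z^n=(\varphi_t-z)\sum_{k=0}^{n-1}\varphi_t^k z^{n-1-k}$ to get the pointwise bound $|\varphi_t^n-z^n|\le n|\varphi_t-z|$, invoke (ii) and (iv) to handle monomials, and then run the $\varepsilon/3$ density argument with (i) and (iii). The only cosmetic difference is that you extract a $\delta$ with $\|C_t\|\le M+1$ at the outset, whereas the paper carries $\|C_t\|$ through and applies the $\limsup$ at the end; your extra care in checking that $\varphi_t^n-z^n$ and $\varphi_t-z$ lie in $X$ before invoking (ii) is a welcome clarification.
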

\begin{proof}
We have to prove that, given $f\in X,$ it is satisfied that 
$$\|f\circ\varphi_t-f\|_X\to0 \quad \textrm{ as } t\to0.$$ 
Let us fix $n\in\mathbb{N}$. We have that 
\begin{equation*}
\varphi^n_t(z)-z^n=\left(\sum_{k=0}^{n-1}\varphi_t^k(z)z^{n-1-k}\right)(\varphi_t(z)-z).
\end{equation*}
 Hence, $$|\varphi_t^n(z)-z^n|\leq n|\varphi_t(z)-z|.$$ 
 By (ii) and (iv), we deduce that
  $$\|\varphi_t^n-\varphi_0^n\|_X\leq nC\|\varphi_t-\varphi_0\|_X\to0\quad \textrm{ as } t\to0.$$
Then, for every polynomial $p,$ we have that 
\begin{equation}\label{con_polynomial}
\| p\circ\varphi_t-p\|_X\to0.
\end{equation}
Fix $\varepsilon>0$. Since polynomials are dense in $X$ (by (i)),  for every $f\in X$ there exists a  polynomial $p$ such that $||p-f ||_X<\varepsilon$ and 
\begin{align*}
\|f\circ\varphi_t-f\|_X&\leq\|f\circ\varphi_t-p\circ\varphi_t\|_X+\|p\circ\varphi_t-p\|_X+\|p-f\|_X\\&\leq (\|C_{t}\|+1)\|p-f\|_X+\|p\circ\varphi_t-p\|_X\leq(\|C_{t}\|+1)\varepsilon+\|p\circ\varphi_t-p\|_X .
\end{align*}
By (iii) and (\ref{con_polynomial}), 
we get $\limsup_{t\to 0^+}\|f\circ\varphi_t-f\|_X\leq M\varepsilon$. The arbitrariness of $\varepsilon$ implies that $\lim_{t\to 0^+}\|f\circ\varphi_t-f\|_X=0$.
\end{proof}

It is easy to see that, for $q<\infty,$ the mixed norm spaces $H(p,q,\alpha)$ satisfy the axioms of Proposition \ref{general_theorem}.

\begin{thm}\label{finite}
{\rm(a)} Every semigroup of analytic functions generates a strongly continuous semigroup of operators on $H(p,q,\alpha)$ for $q<\infty.$

{\rm(b)} No non-trivial semigroup of analytic functions induces a uniformly continuous semigroup of composition operators on $H(p,q,\alpha)$ for $q<\infty$.
\end{thm}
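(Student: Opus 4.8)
The plan is to handle the two parts separately: part (a) will follow by checking the four hypotheses of Proposition \ref{general_theorem}, while part (b) will be an argument by contradiction built on the test functions $f_z$ of Proposition \ref{growth} and the associated pointwise growth estimate.

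For part (a), I would verify hypotheses (i)--(iv) of Proposition \ref{general_theorem} with $X=H(p,q,\alpha)$, $q<\infty$. Hypothesis (i), density of polynomials, is the consequence of Proposition \ref{Sarason} already recorded in the text. Hypothesis (ii) holds with $C=1$ directly from the definition of the norm: if $|f|\le|g|$ then $M_p(r,f)\le M_p(r,g)$ for every $r$, so $\|f\|_{p,q,\alpha}\le\|g\|_{p,q,\alpha}$. For (iii) I would apply Proposition \ref{CO}: since $\varphi_t\to\varphi_0=\mathrm{id}$ uniformly on compact subsets of $\mathbb{D}$, we have $\varphi_t(0)\to 0$ while $\|\varphi_t\|_\infty$ stays bounded below (e.g. $\|\varphi_t\|_\infty\ge\sup_{|z|\le 1/2}|\varphi_t(z)|\to 1/2$), so the bound furnished by Proposition \ref{CO} stays bounded as $t\to 0^+$, giving $M=\limsup_{t\to 0^+}\|C_t\|<\infty$. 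The only hypothesis needing a genuine argument is (iv), $\|\varphi_t-\varphi_0\|_{p,q,\alpha}\to 0$, which I would obtain by dominated convergence: for each fixed $r\in(0,1)$ the uniform convergence of $\varphi_t$ to the identity on the circle $|z|=r$ gives $M_p(r,\varphi_t-\varphi_0)\to 0$, while $|\varphi_t(z)-z|\le 2$ yields the $t$-independent bound $(1-r)^{\alpha q-1}M_p^q(r,\varphi_t-\varphi_0)\le 2^q(1-r)^{\alpha q-1}$, integrable on $(0,1)$ because $\alpha q>0$. Hence the defining integral tends to $0$, and Proposition \ref{general_theorem} yields strong continuity.

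For part (b), I would argue by contradiction. Suppose a non-trivial semigroup $(\varphi_t)$, with generator $G\not\equiv 0$, induced a uniformly continuous semigroup $(C_t)$ on $X=H(p,q,\alpha)$. By part (a) the semigroup $(C_t)$ is strongly continuous, and a strongly continuous semigroup is uniformly continuous exactly when its infinitesimal generator $\Gamma$ is a bounded operator on $X$. On the test functions $f_z$ of Proposition \ref{growth} the generator acts pointwise as $\Gamma f_z=G\,f_z'$: the difference quotients $\tfrac1t(C_tf_z-f_z)$ converge to $\Gamma f_z$ in $X$, hence also pointwise since point evaluations are continuous by the growth estimate, and the pointwise limit of the quotient is precisely $G\,f_z'$. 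Using the explicit values $f_z(z)=(1-|z|^2)^{-(\alpha+1/p)}$ and $f_z'(z)=2(\alpha+\tfrac1p)\bar z\,(1-|z|^2)^{-(\alpha+1/p)-1}$ together with $\|f_z\|_{p,q,\alpha}\approx 1$, I would apply the growth estimate of Proposition \ref{growth} to $\Gamma f_z\in X$ at the point $z$ to get
$$|G(z)|\,|f_z'(z)|=|\Gamma f_z(z)|\le\frac{C\|\Gamma f_z\|_{p,q,\alpha}}{(1-|z|)^{\alpha+1/p}}\le\frac{C'\|\Gamma\|}{(1-|z|)^{\alpha+1/p}}.$$
Substituting $f_z'(z)$ and simplifying leaves $|G(z)|\lesssim\|\Gamma\|\,(1-|z|^2)/|z|$, so $G(z)\to 0$ as $|z|\to 1$. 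Thus $G$ extends continuously to $\overline{\mathbb{D}}$ with zero boundary values and, being analytic and bounded on $\mathbb{D}$, vanishes identically by the maximum modulus principle. Since $G$ determines the semigroup, $\varphi_t=\mathrm{id}$ for every $t$, contradicting non-triviality.

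I expect the main obstacle to be the rigorous identification of $\Gamma f_z$ with $G\,f_z'$ and the conversion of the operator-norm bound $\|\Gamma f_z\|_{p,q,\alpha}\lesssim\|\Gamma\|$ into the pointwise inequality for $G$; once $|G(z)|\lesssim(1-|z|^2)/|z|$ is established, the continuous extension and the maximum-principle conclusion are routine. A technical point worth flagging is that for $p,q<1$ the spaces are only quasi-Banach, so Proposition \ref{general_theorem} and the notion of bounded generator must be read with the quasi-norm; however the growth estimate and the functions $f_z$ persist unchanged, so both arguments carry over with the appropriate constants.
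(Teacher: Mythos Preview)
Your proof of part (a) is essentially identical to the paper's: both verify the hypotheses of Proposition~\ref{general_theorem}, with dominated convergence supplying hypothesis~(iv) and Proposition~\ref{CO} supplying~(iii). Your account is simply more detailed.

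For part (b) your argument is correct but follows a genuinely different route. The paper tests the boundedness of $\Gamma f=Gf'$ on the monomials $f_n(z)=z^n$: from $\|\Gamma f_n\|\ge n\|Gf_{n-1}\|$ and a comparison of the weight integrals on $[\delta,1]$ versus $[0,1]$, it extracts $nM_p(\delta,G)\le 2^{1/q}\|\Gamma\|$ for all $n$, forcing $M_p(\delta,G)=0$ and hence $G\equiv 0$. You instead use the extremal functions $f_z$ of Proposition~\ref{growth} together with the pointwise growth estimate, obtaining directly $|G(z)|\lesssim(1-|z|^2)/|z|$ and concluding via the maximum modulus principle. Your approach is perhaps more conceptual---it amounts to showing that a bounded $\Gamma$ would force $G'$-type decay of $G$ itself near the boundary---and it avoids the choice of $\delta$ and the integral comparison; the paper's approach is more elementary and self-contained, needing only the monomials and no appeal to the extremal functions or the maximum principle. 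Both identifications of $\Gamma$ with $f\mapsto Gf'$ rest on the same standard fact that uniform continuity is equivalent to boundedness of the generator, which you state explicitly and the paper invokes in passing. Your flagged concern about identifying $\Gamma f_z$ with $Gf_z'$ is not a real obstacle: once $\Gamma$ is bounded its domain is all of $X$, convergence in norm implies pointwise convergence by continuity of point evaluations, and the pointwise limit of the difference quotient is $Gf_z'$.
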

\begin{proof} a) Let $(\varphi_t)$ be a semigroup of analytic functions in the unit disk. By Proposition~ \ref{CO}, we have that $\limsup_{t\to 0^+}||C_t||<+\infty$. Moreover, 
 since $\varphi_t$ tends to $\varphi_0$ uniformly on compact subsets of the unit disk, $M_p(r,\varphi_t-\varphi_0)\to0,$ and by Lebesgue's Dominated Convergence Theorem, $\|\varphi_t-\varphi_0\|_{p,q,\alpha}\to0.$ Thus, by Proposition \ref{general_theorem}, the semigroup $(C_t)$ is strongly continuous on $H(p,q,\alpha)$.
 
 b) Let us recall that a semigroup of operators is uniformly continuous on $X$ if its infinitesimal generator is a bounded operator on it.
On the other hand, given a semigroup $(\varphi_t)$ with infinitesimal generator $G$, the infinitesimal generator of $(C_t)$ is given by $\Gamma (f)=~Gf'$.
So, suppose  that $\Gamma$ is bounded on $H(p,q,\alpha)$. Then $$\|\Gamma f\|_{p,q,\alpha}=\|Gf'\|_{p,q,\alpha}\leq\|\Gamma\|\|f\|_{p,q,\alpha}$$ for every $f\in H(p,q,\alpha).$ In particular, for $f_n(z)=z^n$ we have $\|\Gamma\|^q\|f_n\|^q_{p,q,\alpha}\geq n^q\|G f_{n-1}\|^q_{p,q,\alpha}.$ Now let $\delta\in (0,1)$ be such that,  for every $n\in  \N$, $$\int_\delta^1(1-r)^{\alpha q-1}r^{(n-1)q}dr\geq\frac{1}{2}\int_0^1(1-r)^{\alpha q-1}r^{(n-1)q}dr.$$ Hence, since the integral means are increasing functions of $r,$ 
\begin{align*}
\|\Gamma\|^q\|f_n\|^q_{p,q,\alpha}&\geq n^q\|G f_{n-1}\|^q_{p,q,\alpha}=\alpha qn^q\int_0^1(1-r)^{\alpha q-1}r^{(n-1)q}M_p^q(r,G)\,dr\\&\geq\alpha q n^q\int_\delta^1(1-r)^{\alpha q-1}r^{(n-1)q}M_p^q(r,G)\,dr\\&\geq\alpha q n^qM_p^q(\delta,G)\int_\delta^1(1-r)^{\alpha q-1}r^{(n-1)q}dr\\&\geq\frac{\alpha q n^q}{2}M_p^q(\delta,G)\int_0^1(1-r)^{\alpha q-1}r^{(n-1)q}dr\\&\geq\frac{\alpha q n^q}{2}M_p^q(\delta,G)\int_0^1(1-r)^{\alpha q-1}r^{nq}dr=\frac{1}{2}n^q\|f_n\|^q_{p,q,\alpha}M^q_p(\delta,G).
\end{align*}
From here, $$nM_p(\delta,G)\leq 2^{\frac{1}{q}}\|\Gamma\|$$ for $n\in\mathbb{N},$ therefore $M_p(\delta,G)=0$ and that means $G\equiv0.$
\end{proof}

Nevertheless, we can not apply the last theorem to $H(p,\infty,\alpha)$ since polynomials are not dense. In the following sections we will deal with this space.

\section{integral operators}

Let $g$ be an analytic function on the unit disk and $T_g$ the {\sl integral operator} induced by it, namely, $$T_g(f)(z)=\int_0^z f(\zeta)g'(\zeta)\,d\zeta,\quad f\in \mathcal H(\mathbb D)$$ for any $z\in\mathbb{D}.$ This operator was first studied by Pommerenke on the Hardy space $H^2$ (see \cite{Pom}) and later on Hardy and Bergman spaces (see, for example the papers \cite{ASHp}, \cite{ASAp} by Aleman and Siskakis or the recent one \cite{CPPR} by the second author, Pel\'aez, Pommerenke and R\"atty\"a). In the case $g(z)=z$, the integral operator $T_g$ is the classical {\sl Volterra operator} and we denote it by $V$ throughout the paper. 

The boundedness and compactness of the integral operator on the mixed norm spaces will be given in terms of two well-known spaces of analytic functions: the Bloch spaces. We say that an analytic function on the unit disk $f$ is in the \textit{Bloch space} $\mathcal{B}$ if and only if $$\sup_{z\in\mathbb{D}}(1-|z|^2)\,|f'(z)|<\infty,$$ and in the \textit{little Bloch space} $\mathcal{B}_0$ if and only if $$\lim_{|z|\to1^-}(1-|z|^2)\,|f'(z)|=0.$$
Observe that $f\in \cb$ (resp. $f\in \cb_0$) if and only if $f'\in H(\infty,\infty,1)$ (resp. $f'\in H(\infty,0,1)$).

The following lemma, based on a theorem by Hardy and Littlewood, will allow us to relate the integral operator $T_g$ with the {\sl pointwise multiplier} $M_{g'},$ that is, $M_{g'} f=g'f$ for $f\in X.$ 

\begin{lemmax}\label{derivative} \cite[Thm.~5.5]{Dur}
For $0< p\leq\infty$, $\alpha>0$ and $f\in \mathcal H(\D)$ it is satisfied:
\begin{enumerate}[{(1)}]
\item\label{HLO} $M_p(r,f)=O(1-r)^{-\alpha}$ if and only if $M_p(r,f')=O(1-r)^{-(\alpha+1)}$, that is, $f\in H(p,\infty,\alpha)$ if and only if $f'\in H(p,\infty,\alpha+1)$.
\item\label{HLo} $M_p(r,f)=o(1-r)^{-\alpha}$ if and only if $ M_p(r,f')=o(1-r)^{-(\alpha+1)}$,  that is, $f\in H(p,0,\alpha)$ if and only if $f'\in H(p,0,\alpha+1)$.
\end{enumerate}
\end{lemmax}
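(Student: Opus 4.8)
The plan is to reduce all four implications (the $O$- and $o$-versions of each direction of parts \ref{HLO} and \ref{HLo}) to a pair of one-sided estimates relating the integral means $M_p(r,f)$ and $M_p(r,f')$. The first is a \emph{Cauchy estimate} bounding the derivative by the function,
$$M_p(r,f')\le \frac{C}{1-r}\,M_p\Big(\tfrac{1+r}{2},f\Big),\qquad 0<r<1,$$
with $C=C(p)$; the second is an \emph{integration estimate} bounding the function by its derivative, morally $M_p(r,f)\lesssim|f(0)|+\int_0^r M_p(s,f')\,ds$ (the right side needs reinterpretation when $p<1$). Granting these, the forward part of \ref{HLO} is immediate: if $f\in H(p,\infty,\alpha)$ then $M_p(\tfrac{1+r}{2},f)\le K(\tfrac{1-r}{2})^{-\alpha}$, so the Cauchy estimate gives $M_p(r,f')\lesssim K(1-r)^{-(\alpha+1)}$, i.e. $f'\in H(p,\infty,\alpha+1)$; the converse follows from the integration estimate together with $\int_0^r(1-s)^{-(\alpha+1)}\,ds\approx(1-r)^{-\alpha}$, where the hypothesis $\alpha>0$ is exactly what makes this comparison hold. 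The little-oh statement \ref{HLo} comes out of the same two inequalities upon replacing the uniform bounds by $\limsup_{r\to1^-}$ and running an $\varepsilon$/tail argument on the integral.

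For the Cauchy estimate I would argue uniformly in $0<p\le\infty$ by combining subharmonicity with the monotonicity of $\rho\mapsto M_p(\rho,f)$ (Hardy's convexity theorem). Writing $d=\tfrac{1-r}{2}$ and $R=\tfrac{1+r}{2}$, the Euclidean disk $D(z,d)$ with $|z|=r$ lies in $\{|w|<R\}$, and the scaled point-evaluation estimate for Bergman spaces gives $|f'(z)|^p\le C_p\,d^{-(p+2)}\int_{D(z,d)}|f|^p\,dA$ for every $p>0$. Integrating over $|z|=r$, interchanging the order of integration, and using that the $\theta$-measure of $\{\theta:\,w\in D(re^{i\theta},d)\}$ is $O(1-r)$ (the case of small $r$ being trivial) together with $M_p(\rho,f)\le M_p(R,f)$ for $\rho\le R$ collapses the area average to $\lesssim d^{2}M_p(R,f)^p$; hence $M_p(r,f')^p\lesssim d^{-p}M_p(R,f)^p$, which is the Cauchy estimate. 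For $p\ge1$ there is a slicker route applying Minkowski's integral inequality directly to $f'(z)=\tfrac{1}{2\pi d}\int_0^{2\pi}f(z+de^{it})e^{-it}\,dt$ and the harmonic majorant of $|f|^p$, but the area version has the advantage of covering $p<1$ as well.

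The integration estimate is where the real difficulty lies, and it is the step I expect to be the main obstacle. For $p\ge1$ it is immediate from Minkowski's integral inequality applied to $f(re^{i\theta})-f(0)=\int_0^r f'(se^{i\theta})e^{i\theta}\,ds$, yielding $M_p(r,f)\le|f(0)|+\int_0^r M_p(s,f')\,ds$. For $0<p<1$ Minkowski fails and must be replaced by a subharmonic substitute; here my plan is a dyadic decomposition. With $r_k=1-2^{-k}$, the subadditivity $|a+b|^p\le|a|^p+|b|^p$ gives $|f(re^{i\theta})-f(0)|^p\le\sum_k\big(\int_{r_k}^{r_{k+1}}|f'(se^{i\theta})|\,ds\big)^p$; bounding each radial integral by $(r_{k+1}-r_k)\sup_{s\in[r_k,r_{k+1}]}|f'(se^{i\theta})|$ and controlling the radial maximal function of $|f'|$ by a nearby integral mean (again via the sub-mean value property over disks of radius $\sim2^{-k}$) reduces the sum to a geometric series $\lesssim\sum_k 2^{-kp}M_p(\tilde r_k,f')^p$. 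Inserting $M_p(\tilde r_k,f')\lesssim K\,2^{k(\alpha+1)}$ and summing, with the top term $k\approx\log_2\tfrac{1}{1-r}$ dominating, produces exactly $M_p(r,f)\lesssim K(1-r)^{-\alpha}$; the $o$-statement follows once one checks that the tail of the series can be made arbitrarily small while only finitely many initial terms use the full bound. The two places demanding genuine care, namely the failure of the triangle inequality for $M_p$ when $p<1$ and the passage from radial suprema to integral means, are precisely the subharmonicity inputs that make the $p<1$ analysis harder than the elementary $p\ge1$ case; everything else is bookkeeping with the two model inequalities above.
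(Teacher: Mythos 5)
Your proof is correct, but note that there is nothing internal to compare it with: the paper states this result as Lemma~\ref{derivative} and imports it without proof from Duren's book \cite[Thm.~5.5]{Dur} (it is a classical theorem of Hardy and Littlewood). So what you have produced is a self-contained reconstruction of a cited classical result, and the only question is whether it holds water; it does.

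Both of your model inequalities are sound, and the delicate points are handled correctly. The pointwise bound $|f'(z)|^p\le C_p\,d^{-(p+2)}\int_{D(z,d)}|f|^p\,dA$ follows from Cauchy's formula on radius $d/2$ together with subharmonicity of $|f|^p$ on disks of radius $d/2$, and is valid for all $p>0$; your Fubini step then works because the $\theta$-measure of $\{\theta : w\in D(re^{i\theta},d)\}$ is $O(d)$ and the radial extent of the region of integration is $O(d)$, so the area integral indeed collapses to $\lesssim d^2M_p(R,f)^p$ (the case $p=\infty$ is the plain Cauchy estimate). For the integration direction with $p<1$, which you rightly single out as the real difficulty, your dyadic scheme closes for exactly the reason you indicate: the annulus $r_k\le s\le r_{k+1}$ has width comparable to its distance to the boundary, so taking disks of radius $2^{-k-2}$ the sup-by-area plus Fubini device gives
$$\int_0^{2\pi}\Bigl(\sup_{s\in[r_k,r_{k+1}]}|f'(se^{i\theta})|\Bigr)^p\,\frac{d\theta}{2\pi}\le C\,M_p(r_{k+2},f')^p$$
with $C=C(p)$ independent of $k$. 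Since $1-r_{k+2}\approx 2^{-k}$, the resulting series is $\lesssim K^p\sum_{k}2^{k\alpha p}$ over $k\lesssim\log_2\frac{1}{1-r}$, a geometric series with ratio $2^{\alpha p}>1$ dominated by its top term precisely because $\alpha>0$; this matches the role you assign to $\alpha>0$ in the $p\ge1$ case via $\int_0^r(1-s)^{-(\alpha+1)}\,ds\approx(1-r)^{-\alpha}$. The little-oh statements then follow by the splitting you describe: finitely many initial dyadic blocks contribute $O(1)=o\bigl((1-r)^{-\alpha p}\bigr)$, while the remaining blocks carry an $\varepsilon$. In short, your argument is a legitimate, essentially elementary proof of the Hardy--Littlewood lemma, with the $p<1$ obstructions resolved by subharmonicity/area estimates rather than by appeal to the maximal-function machinery available in the cited source.
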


A part of our results is based on the following theorem due to Aleman and Siskakis.

\begin{thmx}\label{AlSis}\cite{ASAp}
Let $X$ a Banach space of analytic functions such that \begin{enumerate}
\item the point evaluation functionals $\delta_z$ are bounded for every $z\in\mathbb{D},$
\item  for every $\lambda\in\mathbb{T}$ the operator $U_\lambda f(z)=f(\lambda z)$ is bounded and $\sup_{\lambda\in\mathbb{T}}\|U_\lambda\|_X<\infty,$ and
\item for some $s\in(0,1)$ the composition operator $C_{\psi}f=f\circ\psi_s$ with $\psi_s(z)=sz+1-s$ is bounded on $X$.
\end{enumerate}  
Let $g$ be an analytic function on the unit disk.
\begin{enumerate}
\item[{\rm(a)}] If $T_g:X\to X$ is bounded then $g\in\mathcal{B}$ and there is a constant $C$ (which does not depend on $g$) such that  $\|g\|_\mathcal{B}\leq C\|T_g\|.$
\item[{\rm(b)}] Moreover, if the multiplication operator $M_z f(z)=zf(z)$ is bounded on $X$ and, for some fixed $t\in(0,1)$, it is satisfied that $\lim_{n\to\infty}\|(tz+1-t)^nf(z)\|=0$ for all $f\in X$, then $T_g$ being compact on $X$ implies $g\in\mathcal{B}_0.$
\end{enumerate}
\end{thmx}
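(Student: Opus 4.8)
The plan is to reduce both statements to pointwise control of the Bloch seminorm $(1-|z|^2)|g'(z)|$ via the identity $(T_gf)'(z)=g'(z)f(z)$, extracting everything from the point-evaluation functionals. As preliminaries, by the uniform boundedness principle one has $\sup_{|w|\le\rho}\|\delta_w\|<\infty$ for each $\rho<1$ (the family $\{\delta_w\}_{|w|\le\rho}$ is pointwise bounded since $w\mapsto f(w)$ is continuous on $\overline{\mathbb{D}(0,\rho)}$ for fixed $f$), so $X$ embeds continuously into $\mathcal{H}(\mathbb{D})$; consequently, writing $E_z(f)=f'(z)$ and applying Cauchy's formula on a circle of radius $\theta(1-|z|)$ about $z$, the functional $E_z$ is bounded with
$$\|E_z\|\le\frac{C}{1-|z|}\sup_{|w-z|=\theta(1-|z|)}\|\delta_w\|.$$
The engine for (a) is then obtained by taking the supremum over the unit ball of $X$ in $|g'(z)||f(z)|=|(T_gf)'(z)|\le\|E_z\|\,\|T_gf\|_X\le\|E_z\|\,\|T_g\|\,\|f\|_X$, namely
$$|g'(z)|\,\|\delta_z\|\le\|E_z\|\,\|T_g\|.$$

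For part (a) it then suffices to show $\|E_z\|\le C(1-|z|)^{-1}\|\delta_z\|$, i.e. that the evaluation norm is locally stable; this is exactly what hypotheses (2) and (3) supply. Axiom (2) gives rotation invariance: from $\delta_w(f)=(U_{w/|w|}f)(|w|)$ one gets $\|\delta_w\|\le M\|\delta_{|w|}\|$ and $\|\delta_{|w|}\|\le M\|\delta_w\|$ with $M=\sup_\lambda\|U_\lambda\|$, reducing matters to the radial profile $r\mapsto\|\delta_r\|$. Axiom (3) compares evaluation norms toward the boundary fixed point $1$: since $\delta_{\psi_s(z)}(f)=(C_{\psi_s}f)(z)$, we have $\|\delta_{\psi_s(z)}\|\le\|C_{\psi_s}\|\,\|\delta_z\|$, and as $\psi_s$ pushes $[0,1)$ toward $1$, a fixed number of its iterates (together with rotations) shows $\|\delta_w\|\approx\|\delta_z\|$ uniformly for $w$ in a fixed hyperbolic disc about $z$. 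Choosing the Cauchy radius $\theta$ adapted to $s$ then yields $\|E_z\|\le C(1-|z|)^{-1}\|\delta_z\|$, hence $(1-|z|^2)|g'(z)|\le C\|T_g\|$; since this bound is automatic on compact subsets of $\mathbb{D}$, we conclude $g\in\mathcal{B}$ with $\|g\|_{\mathcal{B}}\le C\|T_g\|$.

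For part (b) I argue by contradiction. Suppose $g\notin\mathcal{B}_0$, so there are $z_n\to\partial\mathbb{D}$ and $\varepsilon>0$ with $(1-|z_n|^2)|g'(z_n)|\ge\varepsilon$; by axiom (2) we may rotate to assume $z_n=r_n\to1^-$. The plan is to feed into the compact operator a normalized sequence $k_n$ peaked at $r_n$, with $\|k_n\|_X\approx1$, $|k_n(r_n)|\approx\|\delta_{r_n}\|$, and $k_n\to0$ weakly. These are built from the functions $(tz+1-t)^n=\psi_t(z)^n$, which concentrate at the boundary point $1$, using the boundedness of $M_z$ to form the required products and the hypothesis $\|\psi_t^n f\|_X\to0$ to force weak nullity (point evaluations, hence all functionals, annihilate the sequence in the limit). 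Compactness of $T_g$ then gives $\|T_gk_n\|_X\to0$, while the derivative estimate combined with the ratio bound $\|E_{r_n}\|\le C(1-r_n)^{-1}\|\delta_{r_n}\|$ from part (a) yields
$$\|T_gk_n\|_X\ge\frac{|(T_gk_n)'(r_n)|}{\|E_{r_n}\|}=\frac{|g'(r_n)|\,|k_n(r_n)|}{\|E_{r_n}\|}\ge c\,(1-r_n^2)|g'(r_n)|\ge c\varepsilon,$$
a contradiction. Hence $g\in\mathcal{B}_0$.

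The main obstacle is the construction and verification of the test functions and the two norm-comparison facts they encode. For (a) the delicate point is the ratio estimate $\|E_z\|\lesssim(1-|z|)^{-1}\|\delta_z\|$: one must show the evaluation norm is rotation-stable and quasi-monotone along rays toward $1$, which is precisely why the three structural axioms are imposed rather than working with a bare Banach space of analytic functions. For (b) the subtle step is establishing genuine weak nullity (not merely uniform-on-compacta decay) of the peaked sequence $k_n$ from the single quantitative hypothesis $\lim_n\|(tz+1-t)^nf\|_X=0$, since compactness can only be exploited against weakly convergent sequences.
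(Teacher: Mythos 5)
First, a point of order: the paper does not prove this statement at all --- it is Theorem~B, quoted from Aleman--Siskakis \cite{ASAp} --- so your proposal can only be measured against the standard argument in that reference, which your part (a) essentially reproduces. The scheme Cauchy estimate $\|E_z\|\le C(1-|z|)^{-1}\sup_{|w-z|=\theta(1-|z|)}\|\delta_w\|$ plus the evaluation-norm comparison via the maps $w=\lambda\psi_s(\mu z)$ is the right one and it works: for $\theta\le 1-s$ and $|z|\ge r_0(s,\theta)$ every $w$ with $|w-z|=\theta(1-|z|)$ can be written as $\lambda\psi_s(\mu z)$ with $|\lambda|=|\mu|=1$, whence $\|\delta_w\|=\|\delta_z\circ U_\mu C_{\psi_s}U_\lambda\|\le M^2\|C_{\psi_s}\|\|\delta_z\|$. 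Two details are glossed over but are harmless: this comparison is only available for $|z|$ bounded away from $0$ (from the origin the maps $\lambda\psi_s(\mu z)$ only reach points of modulus $1-s$), and the bound $(1-|z|^2)|g'(z)|\le C\|T_g\|$ on the remaining compact set is \emph{not} ``automatic'' --- it follows from the estimate on the annulus $r_0\le|z|<1$ by the maximum principle applied to $g'$.

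The genuine gap is in part (b), precisely at the step you flag yourself. You assert that the peaked sequence $k_n$ is weakly null because ``point evaluations, hence all functionals, annihilate the sequence in the limit.'' That implication is false in a general Banach space of analytic functions satisfying (1)--(3): a bounded sequence tending to zero uniformly on compacta need not be weakly null. For instance, in $H^\infty$ (which satisfies (1)--(3)) the sequence $(z^n)$ is bounded and tends to $0$ locally uniformly, yet any Hahn--Banach extension to $H^\infty$ of $f\mapsto f(1)$ from the disc algebra takes the value $1$ on every $z^n$. You offer no mechanism by which the extra hypothesis $\|\psi_t^nf\|_X\to0$ upgrades pointwise nullity to weak nullity, so as written the compactness step does not go through. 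Moreover, your closing claim that ``compactness can only be exploited against weakly convergent sequences'' is also wrong, and correcting it is exactly how the proof is repaired: since point evaluations are bounded, a compact operator sends bounded sequences that tend to $0$ uniformly on compacta to norm-null sequences. Indeed, $T_g(B_X)$ is relatively compact, and if $f_j\to0$ uniformly on compacta then $(T_gf_j)(w)=\int_0^wf_jg'\,d\zeta\to0$ for each $w$, so every norm-limit point of $(T_gf_j)$ is $0$, hence $\|T_gf_j\|\to0$. With this substitute, your construction does work: take $k_j(z)=\psi_t(\overline{\lambda_j}z)^{m_j}f_j(z)$, where $z_j=\lambda_jr_j$, $f_j$ is near-extremal for $\delta_{z_j}$, and $m_j\approx(1-r_j)^{-1}$ is chosen so that $(1-t(1-r_j))^{m_j}\ge\tfrac12$. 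Then $|k_j(z_j)|\ge\tfrac14\|\delta_{z_j}\|$; the norms $\|k_j\|_X$ are uniformly bounded --- this is where the hypothesis $\|\psi_t^nf\|_X\to0$ is genuinely used, since by the uniform boundedness principle it gives $\sup_n\|M_{\psi_t^n}\|<\infty$, while $M_{\psi_t}=tM_z+(1-t)I$ is bounded because $M_z$ is; and $k_j\to0$ uniformly on compacta because $|\psi_t(\overline{\lambda_j}z)|\le1-t(1-\rho)<1$ on $|z|\le\rho$ and $m_j\to\infty$. Your final derivative estimate then yields the contradiction exactly as written. So the architecture of (b) is correct, but the weak-nullity step is both unjustified and unnecessary, and it is the one point where the proof, as submitted, fails.
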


With the last two results we are ready to characterize the symbols for which the integral operator is bounded and compact on $H(p,q,\alpha)$ for $q<\infty.$ For $q=\infty$, the spaces $H(p,0,\alpha)$ and $H(p,\infty,\alpha)$ do not satisfy the hypothesis (b) above, so we will need a different and deeper argument.

\begin{prop}
Let $g$ be an analytic function on the unit disk, $T_g$ the integral operator that induces, and $0<p\leq\infty,$ $0<\alpha,q<\infty.$
\begin{itemize}
\item $T_g:H(p,q,\alpha)\to H(p,q,\alpha)$ is bounded if and only if $g\in\mathcal{B}.$
\item $T_g:H(p,q,\alpha)\to H(p,q,\alpha)$ is compact if and only if $g\in\mathcal{B}_0.$
\end{itemize}
\end{prop}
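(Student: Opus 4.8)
The plan is to prove each of the two equivalences by splitting it into necessity and sufficiency, using Theorem~\ref{AlSis} for the necessity directions and direct integral estimates for sufficiency. First I would verify that $H(p,q,\alpha)$ with $q<\infty$ satisfies the hypotheses of Theorem~\ref{AlSis}. Proposition~\ref{growth} gives the boundedness of the point evaluations $\delta_z$; the rotation operators $U_\lambda$ fix the origin, so by Proposition~\ref{CO} they are isometries and $\sup_{\lambda\in\mathbb{T}}\|U_\lambda\|=1$; and Proposition~\ref{CO} also shows that every composition operator, in particular $C_{\psi_s}$, is bounded. This already yields that $T_g$ bounded implies $g\in\mathcal{B}$. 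For the compactness half of Theorem~\ref{AlSis} I additionally need $M_z$ bounded, which is immediate from $M_p(r,zf)=rM_p(r,f)\le M_p(r,f)$, and the decay $\|(tz+1-t)^nf\|_{p,q,\alpha}\to0$ for every $f$. The latter I would get from dominated convergence: on $|z|=r$ one has $|tz+1-t|\le 1-t(1-r)<1$, so $(tz+1-t)^n\to0$ uniformly on each circle and $M_p(r,(tz+1-t)^nf)\to0$ for each $r$, dominated by $M_p(r,f)$; since $q<\infty$ the integral defining the norm tends to $0$. Hence $T_g$ compact implies $g\in\mathcal{B}_0$.

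For the sufficiency of boundedness I would pass to derivatives. Note that $(T_gf)'=fg'$ and $T_gf(0)=0$, and recall the integral analogue of Lemma~\ref{derivative}, valid for $0<q<\infty$, namely $\|h\|_{p,q,\alpha}\approx|h(0)|+\|h'\|_{p,q,\alpha+1}$. Thus it suffices to estimate $\|fg'\|_{p,q,\alpha+1}$. If $g\in\mathcal{B}$ then $|g'(re^{i\theta})|\le \|g\|_\mathcal{B}/(1-r^2)$ uniformly in $\theta$, whence $M_p(r,fg')\le \frac{C\|g\|_\mathcal{B}}{1-r}M_p(r,f)$. Substituting this and using $(1-r)^{(\alpha+1)q-1-q}=(1-r)^{\alpha q-1}$ gives $\|fg'\|_{p,q,\alpha+1}\lesssim\|g\|_\mathcal{B}\|f\|_{p,q,\alpha}$, so $T_g$ is bounded.

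For the sufficiency of compactness I would use that, since point evaluations are bounded (Proposition~\ref{growth}), a bounded operator on $H(p,q,\alpha)$ is compact if and only if it maps norm-bounded sequences converging to $0$ uniformly on compact sets to norm-null sequences. So take $\|f_n\|_{p,q,\alpha}\le1$ with $f_n\to0$ uniformly on compacta, and bound $\|T_gf_n\|_{p,q,\alpha}\lesssim\|f_ng'\|_{p,q,\alpha+1}$ as above. I would split the integral at a radius $\delta$. On $[\delta,1)$ the little-Bloch condition gives $\sup_{|z|\ge\delta}(1-|z|^2)|g'(z)|=:\eta(\delta)\to0$ as $\delta\to1$, so this part is controlled by $C\eta(\delta)^q\|f_n\|_{p,q,\alpha}^q\le C\eta(\delta)^q$; on $[0,\delta]$ the factor $g'$ is bounded and $M_p(r,f_n)\to0$ uniformly for $r\le\delta$, so this part tends to $0$ as $n\to\infty$. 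Choosing $\delta$ first and then $n$ large shows $\|T_gf_n\|_{p,q,\alpha}\to0$, proving compactness.

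The main obstacle is the passage from the derivative back to the function, that is, the equivalence $\|h\|_{p,q,\alpha}\approx|h(0)|+\|h'\|_{p,q,\alpha+1}$ for $q<\infty$, since Lemma~\ref{derivative} as stated only covers the endpoint cases $q\in\{0,\infty\}$; I would either quote the classical integral form of the Hardy--Littlewood theorem (as in \cite{Flett} or \cite{JVA}) or establish directly the single inequality I actually use. A minor technical point to check carefully is the compactness criterion via uniform convergence on compacta, which relies on the growth estimate of Proposition~\ref{growth} to ensure that norm-bounded sets form normal families.
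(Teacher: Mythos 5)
Your proposal is correct and follows essentially the same route as the paper: Theorem~\ref{AlSis} for the implications from boundedness/compactness of $T_g$ to $g\in\mathcal{B}$/$g\in\mathcal{B}_0$, and the factorization $T_g=V\circ M_{g'}$ (your equivalence $\|h\|_{p,q,\alpha}\approx|h(0)|+\|h'\|_{p,q,\alpha+1}$ is exactly the boundedness of $V:H(p,q,\alpha+1)\to H(p,q,\alpha)$) together with the same split-the-integral argument on null sequences for the converses. If anything, you are more careful than the paper on two points it glosses over: you verify the hypotheses of Theorem~\ref{AlSis} for $H(p,q,\alpha)$, and you note that Lemma~\ref{derivative} as stated covers only the cases $q\in\{0,\infty\}$, so for $q<\infty$ one must invoke the integral (Flett) form of the Hardy--Littlewood theorem.
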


\begin{proof}
The sufficiency in both statements is given by Theorem \ref{AlSis}. 

If $g\in\mathcal{B}$ then the multiplication operator $M_{g'}$ is bounded from $H(p,q,\alpha)$ to $H(p,q,\alpha+~1)$ and (\ref{HLO}) in Lemma~\ref{derivative} shows that $V(f)(z)=\int_0^zf(\zeta)\,d\zeta$ is a bounded operator from $H(p,q,\alpha+1)$ to $H(p,q,\alpha),$ so $T_g=V\circ M_{g'}$ is bounded. 

If $g\in\mathcal{B}_0,$ using again that $T_g=V\circ M_{g'},$ and that $V$ is bounded, the result follows from the compactness of the multiplication operator $M_{g'}:H(p,q,\alpha)\to H(p,q,\alpha+1)$. Indeed, let $(f_n)$ be a sequence in the unit ball of $H(p,q,\alpha)$ that converges to zero uniformly on compact subsets of the unit disk. We have to prove that $||M_{g'}f_n||_{p,q,\alpha}\to 0$ as $n\to \infty$. Since $g\in\mathcal{B}_0,$ for $\varepsilon>0$ there exists a $R<1$ such that $|g'(z)|(1-|z|)<\varepsilon$ for $|z|\geq R.$ Now, let $N_0\in \N$ be such that $|f_n(z)|\leq \varepsilon/||g||_\cb$ for $n\geq N_0$ and $|z|\leq R$. Then 
\begin{align*}
\|g'f_n\|_{p,q,\alpha+1}^q&=(\alpha+1) q\int_0^1(1-r)^{(\alpha+1)q-1}M_p^q(r,g'f_n)\,dr\\&
\leq(\alpha+1) q\int_0^1(1-r)^{(\alpha+1)q-1}\left(\sup_{\theta\in [0,2\pi]}|g'(re^{i\theta})|\right)^qM_p^q(r,f_n)\,dr\\&
\leq\|g\|_\mathcal{B}^q\,(\alpha+1) q\int_0^R(1-r)^{\alpha q-1}M_p^q(r,f_n)\,dr\\&
+\varepsilon^q\,(\alpha+1) q\int_R^1(1-r)^{\alpha q-1}M_p^q(r,f_n)\,dr\leq 2\frac{\alpha+1}{\alpha}\varepsilon^q.
\end{align*}

Then $\|g'f_n\|_{p,q,\alpha}\to 0$ and therefore $M_{g'}:H(p,q,\alpha)\to H(p,q,\alpha+1)$ is compact.
\end{proof}

Now we prove directly the boundedness on $H(p,\infty,\alpha)$ and $H(p,0,\alpha).$

\begin{prop}\label{integralO} Let $g$ be an analytic function in the unit disk. The following are equivalent:
\begin{itemize}
\item[(a)] $T_g: H(p,\infty,\alpha)\to H(p,\infty,\alpha)$ is bounded;
\item[(b)] $T_g: H(p,0,\alpha)\to H(p,0,\alpha)$ is bounded;
\item[(c)] $T_g: H(p,0,\alpha)\to H(p,\infty,\alpha)$ is bounded;
\item[(d)] $g\in\mathcal{B}.$
\end{itemize}
\end{prop}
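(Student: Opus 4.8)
The plan is to prove the chain $(d)\Rightarrow(a)\Rightarrow(c)\Rightarrow(d)$ together with $(d)\Rightarrow(b)\Rightarrow(c)$, which renders all four statements equivalent. The two implications emanating from $(d)$ are the sufficiency part and repeat verbatim the factorization argument used in the previous proposition for $q<\infty$: write $T_g=V\circ M_{g'}$, where $V(f)(z)=\int_0^z f(\zeta)\,d\zeta$ and $M_{g'}f=g'f$. If $g\in\mathcal{B}$ then $g'\in H(\infty,\infty,1)$, so $(1-r)M_\infty(r,g')\lesssim\|g\|_{\mathcal{B}}$, and from $M_p(r,g'f)\leq M_\infty(r,g')\,M_p(r,f)$ one reads off that $M_{g'}$ maps $H(p,\infty,\alpha)$ boundedly into $H(p,\infty,\alpha+1)$; the little-oh factor $(1-r)^\alpha M_p(r,f)\to0$ shows likewise that $M_{g'}$ carries $H(p,0,\alpha)$ into $H(p,0,\alpha+1)$. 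Composing with $V$, which is bounded from $H(p,\infty,\alpha+1)$ to $H(p,\infty,\alpha)$ by the integration direction of Lemma~\ref{derivative}\eqref{HLO} and from $H(p,0,\alpha+1)$ to $H(p,0,\alpha)$ by \eqref{HLo}, yields $(d)\Rightarrow(a)$ and $(d)\Rightarrow(b)$.

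The two implications into $(c)$ are immediate from the inclusions of the spaces. Indeed $H(p,0,\alpha)\subseteq H(p,\infty,\alpha)$ isometrically (same norm), so restricting a bounded operator $T_g\colon H(p,\infty,\alpha)\to H(p,\infty,\alpha)$ to the subspace $H(p,0,\alpha)$ gives a bounded operator into $H(p,\infty,\alpha)$, proving $(a)\Rightarrow(c)$; and a bounded operator $T_g\colon H(p,0,\alpha)\to H(p,0,\alpha)$ followed by the norm-one inclusion of the codomain into $H(p,\infty,\alpha)$ proves $(b)\Rightarrow(c)$. Thus everything reduces to the single substantive implication $(c)\Rightarrow(d)$.

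For $(c)\Rightarrow(d)$ I would argue directly with the test functions of Proposition~\ref{growth}. Assume $T_g\colon H(p,0,\alpha)\to H(p,\infty,\alpha)$ is bounded and fix $z\in\mathbb{D}$. The function $f_z(w)=(1-|z|^2)^{\alpha+\frac1p}(1-\bar z w)^{-2(\alpha+\frac1p)}$ lies in $H(p,0,\alpha)$ with $\|f_z\|_{p,0,\alpha}\approx1$ and $|f_z(z)|=(1-|z|^2)^{-(\alpha+\frac1p)}$, so $\|T_g f_z\|_{p,\infty,\alpha}\leq\|T_g\|\,\|f_z\|_{p,0,\alpha}\lesssim\|T_g\|$. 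Since $(T_g f_z)'=f_z\,g'$ and $T_g f_z(0)=0$, the differentiation direction of Lemma~\ref{derivative}\eqref{HLO} gives $\|f_z g'\|_{p,\infty,\alpha+1}\lesssim\|T_g f_z\|_{p,\infty,\alpha}\lesssim\|T_g\|$. Now apply the pointwise growth estimate of Proposition~\ref{growth}, with exponent $\alpha+1$ in place of $\alpha$, to the function $f_z g'\in H(p,\infty,\alpha+1)$ at the point $w=z$:
\[
|f_z(z)|\,|g'(z)|\;\leq\;\frac{C\,\|f_z g'\|_{p,\infty,\alpha+1}}{(1-|z|)^{(\alpha+1)+\frac1p}}\;\lesssim\;\frac{\|T_g\|}{(1-|z|)^{\alpha+1+\frac1p}}.
\]
Dividing by $|f_z(z)|=(1-|z|^2)^{-(\alpha+\frac1p)}$ and using $(1-|z|^2)\approx(1-|z|)$ collapses the powers of $(1-|z|)$ and leaves $(1-|z|)\,|g'(z)|\lesssim\|T_g\|$ uniformly in $z$, whence $g\in\mathcal{B}$ with $\|g\|_{\mathcal{B}}\lesssim\|T_g\|$.

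The main obstacle is precisely this last implication $(c)\Rightarrow(d)$: the off-diagonal boundedness $H(p,0,\alpha)\to H(p,\infty,\alpha)$ falls outside the scope of Theorem~\ref{AlSis}, which only produces $g\in\mathcal{B}$ from operators acting within a single space $X\to X$. The test-function argument above circumvents this by localizing near each boundary point, and the key technical point to verify carefully is that Lemma~\ref{derivative}\eqref{HLO} supplies the norm inequality $\|f_z g'\|_{p,\infty,\alpha+1}\lesssim\|T_g f_z\|_{p,\infty,\alpha}$ with a constant independent of $z$, so that the resulting Bloch estimate is genuinely uniform.
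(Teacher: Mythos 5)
Your proof is correct and takes essentially the same route as the paper: sufficiency $(d)\Rightarrow(a),(b)$ via the factorization $T_g=V\circ M_{g'}$ and Lemma~\ref{derivative}, and necessity $(c)\Rightarrow(d)$ via the test functions $f_z$ of Proposition~\ref{growth} combined with the pointwise growth estimate in $H(p,\infty,\alpha+1)$ applied to $(T_gf_z)'=f_zg'$. The only cosmetic difference is that the paper first states that $M_{g'}\colon H(p,0,\alpha)\to H(p,\infty,\alpha+1)$ is bounded and then tests on $f_z$, whereas you apply the Hardy--Littlewood norm inequality to each $T_gf_z$ directly; both arguments rest on the same uniform-constant version of Lemma~\ref{derivative} (and on the trivial restriction/inclusion implications into $(c)$, which you make explicit and the paper leaves implicit).
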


\begin{proof}
If $g\in\mathcal{B}$ and $f\in H(p,\infty,\alpha)$, then 
\begin{align*}
\|g'f\|_{p,\infty,\alpha+1}&=\sup_{0\leq r<1}(1-r)^{\alpha+1}M_p(r,g'f)\\&\leq ||g||_\mathcal B\sup_{0\leq r<1}\frac{(1-r)^{\alpha+1}}{1-r}M_p(r,f)= ||g||_\mathcal B\,\|f\|_{p,\infty,\alpha}.
\end{align*}
So, by Lemma \ref{derivative}, (d) implies (a) and (c). The same inequalities show that if $f\in~H(p,0,\alpha)$, then $g'f\in  H(p,0,\alpha+1)$. Again, by Lemma \ref{derivative}, (d) implies (b).

On the other hand, suppose $g'f\in H(p,\infty,\alpha+1)$ for every $f\in H(p,0,\alpha).$ This means that the operator $M_{g'}$ is bounded from  $H(p,0,\alpha)$ into $H(p,\infty,\alpha+1)$. Let us denote by $M$ the norm of this operator.  Then, by Proposition \ref{growth}, there exists a constant $C>0$ such that, for every $z\in\mathbb{D},$
$$
|g'(z)f(z)|\leq\frac{C\|g'f\|_{p,\infty,\alpha+1}}{(1-|z|)^{\alpha+1+\frac{1}{p}}}\leq\frac{CM\|f\|_{p,\infty,\alpha}}{(1-|z|)^{\alpha+1+\frac{1}{p}}}.
$$ 
Choosing $f_z\in H(p,0,\alpha)$ as $$f_z(w)=\frac{(1-|z|^2)^{\alpha+\frac{1}{p}}}{(1-\overline{w}z)^{2(\alpha+\frac{1}{p})}},$$ a function that satisfies $|f_z(z)|=(1-|z|^2)^{-(\alpha+\frac{1}{p})}$ and $\|f_z\|_{p,\infty,\alpha}\approx1,$ we get $$|g'(z)f_z(z)|=\frac{|g'(z)|}{(1-|z|^2)^{\alpha+\frac{1}{p}}}\lesssim\frac{CM}{(1-|z|)^{\alpha+1+\frac{1}{p}}}.$$ From here it is clear that $g\in\mathcal{B}.$ This argument shows that (b) or (c) implies (d).
\end{proof}

The last case, that is, the boundedness of the integral operator from the bigger space $H(p,\infty,\alpha)$ to the smaller space $H(p,0,\alpha)$ will be very interesting for us in the study of semigroups of composition operators on $H(p,\infty,\alpha).$ 

We will need two lemmas to continue. As usual, 
given an analytic function $f\in \ch(\D)$, we denote by 
$\widehat f(k)$ its $k$'th Taylor coefficient. 

\begin{lemma}\label{Fejer}
For every $N\in\N$ there exists a polynomial $G_N$ satisfying:
\begin{itemize}
\item[(a)] $\widehat{G_N}(k)\ge 0$, for every $k\in [N,3 N]$, 
\item[(b)] $\widehat{G_N}(k)=0$, for every $k\notin (N,3 N)$,
\item[(c)] $\|G_N\|_{H^1}=1$, and 
\item[(d)] $\|G_N\|_{H^\infty}=\sum_{k=N}^{3N} \widehat{G_N}(k)=N$.
\item[(e)] For $1<p<+\infty$, $\|G_N\|_{H^p}\le N^{\frac{1}{p'}}$, where $\frac{1}{p'} + \frac{1}{p}=1$.
\end{itemize}
\end{lemma}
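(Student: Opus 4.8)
The plan is to build $G_N$ explicitly from the Fejér kernel, whose non-negativity makes every requested quantity computable. Recall that the Fejér kernel of order $m$,
$$K_m(\theta) = \sum_{j=-m}^{m}\Big(1 - \frac{|j|}{m+1}\Big)e^{ij\theta} = \frac{1}{m+1}\Big|\sum_{j=0}^{m}e^{ij\theta}\Big|^2 \ge 0,$$
has constant Fourier coefficient equal to $1$ and attains its maximum $K_m(0) = m+1$ at $\theta = 0$. First I would set $m = N-1$ and define
$$G_N(z) = z^{2N}\sum_{j=-(N-1)}^{N-1}\Big(1 - \frac{|j|}{N}\Big)z^{j}, \qquad\text{so that}\qquad G_N(e^{i\theta}) = e^{2iN\theta}K_{N-1}(\theta).$$
The factor $z^{2N}$ shifts the frequency band $\{-(N-1),\dots,N-1\}$ of $K_{N-1}$ onto $\{N+1,\dots,3N-1\}$, which is exactly the support required. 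This gives (b) at once, and since the shifted coefficients are the values $1-|j|/N\ge 0$ (with zeros at the endpoints $N$ and $3N$), it also gives (a).

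For the norm identities I would exploit that $|G_N(e^{i\theta})| = K_{N-1}(\theta)$, since multiplication by the unimodular factor $e^{2iN\theta}$ leaves the modulus unchanged and $K_{N-1}\ge 0$. Hence $\|G_N\|_{H^1} = \frac{1}{2\pi}\int_0^{2\pi}K_{N-1}(\theta)\,d\theta = \widehat{K_{N-1}}(0) = 1$, which is (c). For (d), the non-negativity of the coefficients forces $\|G_N\|_{H^\infty} = \sum_k \widehat{G_N}(k) = G_N(1) = K_{N-1}(0) = N$, using the elementary identity $\sum_{j=-(N-1)}^{N-1}(1-|j|/N) = N$.

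Finally, (e) follows from a one-line interpolation between the $L^1$ and $L^\infty$ bounds just obtained. Since $0 \le K_{N-1}(\theta) \le K_{N-1}(0) = N$ pointwise, for $1 < p < \infty$ I would write $K_{N-1}^{\,p} = K_{N-1}^{\,p-1}K_{N-1} \le N^{p-1}K_{N-1}$ and integrate, obtaining
$$\|G_N\|_{H^p}^p = \frac{1}{2\pi}\int_0^{2\pi}K_{N-1}(\theta)^p\,d\theta \le N^{p-1}\cdot\frac{1}{2\pi}\int_0^{2\pi}K_{N-1}(\theta)\,d\theta = N^{p-1},$$
so that $\|G_N\|_{H^p}\le N^{(p-1)/p} = N^{1/p'}$.

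There is no genuine obstacle once one commits to the Fejér kernel; the only delicate point is the bookkeeping of the shift $z^{2N}$, so that the spectrum lands precisely in $(N,3N)$ while simultaneously the choice $m+1 = N$ matches the prescribed value $N$ for both the $H^\infty$ norm and the coefficient sum. Everything else reduces to the two classical facts that $K_{N-1}$ is non-negative with integral $1$ and peak value $N$.
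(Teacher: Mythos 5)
Your proof is correct and is essentially identical to the paper's: both define $G_N(z)=z^{2N}F_{N-1}(z)$ with $F_{N-1}$ the Fej\'er kernel, and both obtain (e) from the interpolation bound $\|f\|_p\le \|f\|_\infty^{1/p'}\|f\|_1^{1/p}$ (your pointwise estimate $K_{N-1}^p\le N^{p-1}K_{N-1}$ is just its standard proof). The only difference is that you write out the verifications that the paper dismisses as ``easy to check.''
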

\begin{proof}
Let $F_{N-1}$ be the $(N-1)$'th Fej\'er Kernel, namely
$$
F_{N-1}(e^{it})=\sum_{k=-N+1}^{N-1} \Bigl(1-\frac{|k|}{N}\Bigr) e^{ikt}\,, \qquad e^{it}\in \T\,.
$$
It is known that $\|F_{N-1}\|_{L^1(\T)}=1$. Define $G_N(z)=z^{2N}F_{N-1}(z)$ for 
$z\in \T$, and we obtain the required polynomial. All the properties are easy to check.
The last one comes from the inequality $\|f\|_p\le \|f\|_\infty^{\frac{1}{p'}}\|f\|_1^{\frac{1}{p}}$.
\end{proof}

\begin{lemma}\label{Onoo}
Suppose $g\in \cb \setminus \cb_0$. Then there exist  $\delta\in (0,\pi/8)$,
an increasing sequence $(r_n)_n$ in $(0,1)$, and a sequence $(t_n)_n$ in $ [0,2\pi)$ satisfying:
\begin{itemize}
\item[(a)] For every $n\in\N$ and every $t\in [-\delta(1-r_n),\delta(1-r_n)]$ we have
$$
(1-r_n)|g'(r_n e^{i(t_n+t)})|\ge \delta.
$$
\item[(b)] $\lim_{n\to\infty} r_n =1$.
\end{itemize}
\end{lemma}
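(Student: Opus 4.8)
The plan is to exploit the two hypotheses separately: the failure of $g\in\cb_0$ produces radii where $(1-r)|g'|$ is bounded below at a single point, while the membership $g\in\cb$ controls the size of $g''$, hence the oscillation of $g'$, allowing me to promote that pointwise lower bound to a uniform lower bound over a whole arc of angular width comparable to $1-r_n$.

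First I would extract the points. Since $g\notin\cb_0$ we have $L:=\limsup_{|z|\to1^-}(1-|z|^2)|g'(z)|>0$, and $L<\infty$ because $g\in\cb$. Fix $\eta\in(0,L)$. Then there is a sequence $z_n=r_ne^{it_n}$ with $t_n\in[0,2\pi)$, $r_n\to1$, and $(1-r_n^2)|g'(z_n)|\geq\eta$; thinning to a subsequence I may assume $(r_n)$ is increasing, which gives (b). As $1-r_n^2\leq2(1-r_n)$, this already yields the single-point bound $(1-r_n)|g'(z_n)|\geq\eta/2$.

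Next I would use $g\in\cb$ to bound the second derivative. Since $g\in\cb$ is equivalent to $g'\in H(\infty,\infty,1)$, Lemma~\ref{derivative}(\ref{HLO}) applied to $g'$ gives $g''\in H(\infty,\infty,2)$, i.e.\ there is $C>0$ with $(1-|z|)^2|g''(z)|\leq C$ on $\D$ (alternatively this is a Cauchy estimate on the disk $D(z,(1-|z|)/2)$). Now fix $n$ and $t$ with $|t|\leq\delta(1-r_n)$, and integrate $g''$ along the circular arc $\zeta(s)=r_ne^{is}$ joining $z_n$ to $w=r_ne^{i(t_n+t)}$. On this arc $|\zeta(s)|=r_n$, so $|g''(\zeta(s))|\leq C(1-r_n)^{-2}$, and since the arc has length $r_n|t|\leq\delta(1-r_n)$ I obtain
$$|g'(w)-g'(z_n)|\leq\frac{C}{(1-r_n)^2}\cdot\delta(1-r_n)=\frac{C\delta}{1-r_n},$$
whence $(1-r_n)|g'(w)|\geq(1-r_n)|g'(z_n)|-C\delta\geq\eta/2-C\delta$.

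Finally I would fix the constant by setting $\delta=\min\{\eta/4,\ \eta/(4C),\ \pi/8\}$, so that $C\delta\leq\eta/4$ and the previous line gives $(1-r_n)|g'(w)|\geq\eta/2-\eta/4=\eta/4\geq\delta$, which is precisely (a). I do not anticipate a serious obstacle: the argument is a mean-value/oscillation estimate, and the only delicate point is the interplay of constants, namely that a single $\delta$ must simultaneously control the oscillation (through $C\delta\leq\eta/4$) and serve as the target lower bound ($\delta\leq\eta/4$), which the displayed choice achieves. The conceptual heart is the passage from $\cb$-membership to the $g''$ bound, which forces the pointwise spikes of $g'$ to be ``fat'' on the scale $1-r_n$; this is exactly the input that the subsequent Fej\'er-kernel construction of Lemma~\ref{Fejer} will need.
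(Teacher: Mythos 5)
Your proof is correct and follows essentially the same route as the paper's: extract a sequence $z_n=r_ne^{it_n}$ from the failure of $g\in\cb_0$, use the Bloch condition to get the second-derivative bound $|g''(z)|\le C(1-r_n)^{-2}$ (the paper does this via the Maximum Principle plus Cauchy's estimate, which you mention as your alternative), and then an oscillation estimate over an arc of length $\approx\delta(1-r_n)$ with $\delta$ chosen small relative to $\eta$ and $C$. The only (immaterial) differences are that you integrate $g''$ along the circular arc rather than the chord and that you display the choice of $\delta$ explicitly instead of saying ``$\delta$ small enough.''
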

\begin{proof} Write $M=||g||_{\mathcal B}$.
Since $g\notin \cb_0$ we get the existence of $\eta>0$, an increasing sequence $(r_n)_n$ in 
$(0,1)$ with $\lim_{n\to\infty} r_n=1$, and a sequence $(t_n)_n$ in $[0,2\pi)$, such that
\begin{equation}\label{g_no_Bloch0}
(1-r_n)|g'(r_ne^{it_n})|\ge \eta\,,\qquad\text{for all $n$.}
\end{equation}

From  the Maximum Principle we have 
$$
|g'(z)|\le \frac{2M}{1-r_n}, \qquad \text{if }\,|z|\le\frac{1+r_n}{2}.
$$
By Cauchy's inequality this gives 
$$
|g''(z)|\le \frac{4M}{(1-r_n)^2},  \qquad \text{if }\,|z|\le r_n.
$$
Take $\delta>0$ and $|s-t_n|<\delta(1-r_n)$. Then
$$
|g'(r_ne^{is})-g'(r_ne^{it_n})|\le |r_ne^{is} - r_ne^{it_n}|\frac{4M}{(1-r_n)^2}\le \frac{4|s-t_n|M}{(1-r_n)^2}
\le \frac{4\delta M}{1-r_n}
\le \frac{\eta/2}{1-r_n}\,,
$$
if $\delta$ is small enough.
This fact and (\ref{g_no_Bloch0}) imply, for $|t|\le \delta(1-r_n)$,
$$
|g'(r_ne^{i(t+t_n)})|(1-r_n)\ge \frac{\eta}{2} \ge \delta\,,
$$
if $\delta$ is small enough. The lemma follows.
\end{proof}


Now we are ready to state the main result of this section.

\begin{thm}\label{copy_l_infty}If $g\in \cb \setminus \cb_0$, then the operator $T_g: H(p,0,\alpha)\to H(p,0,\alpha)$ fixes 
a copy of $c_0$ and the operator $T_g: H(p,\infty,\alpha)\to H(p,\infty,\alpha)$ fixes 
a copy of $\ell_\infty$. Consequently this last operator has a non separable image.
\end{thm}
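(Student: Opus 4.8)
The plan is to construct, out of the Bloch-non-little-Bloch obstruction furnished by Lemma~\ref{Onoo}, a sequence of ``bump'' functions in $H(p,\infty,\alpha)$ whose images under $T_g$ are almost disjointly supported on annuli, and then to show that $T_g$ acts on the closed span of these bumps like a multiple of the natural inclusion of $\ell_\infty$ (resp. $c_0$). Concretely, using the sequence $(r_n)$, $(t_n)$ and the angle $\delta$ from Lemma~\ref{Onoo}, I would build for each $n$ a function $f_n$ concentrated near the point $r_ne^{it_n}$, taking the Fej\'er-type polynomials $G_N$ of Lemma~\ref{Fejer} as the vehicle: a rotated, dilated version $G_{N_n}$ (with $N_n$ chosen so that the ``width'' $1/N_n$ of the polynomial matches the scale $(1-r_n)$ of the annulus) will have its mass concentrated on $|z|\approx r_n$ and, on an arc of angular width $\approx\delta(1-r_n)$ around $e^{it_n}$, will be comparably large. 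Property~(d) of Lemma~\ref{Fejer} ($\|G_N\|_{H^\infty}=N$) together with property~(e) then lets me estimate $M_p(r,f_n)$ from above and below so that $\|f_n\|_{p,\infty,\alpha}\approx 1$.

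\medskip

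Next I would analyze $T_g(f_n)$. Since $T_g(f_n)'=f_ng'$, the growth of $M_p(r,T_g(f_n))$ is governed, via Lemma~\ref{derivative}, by $M_p(r,f_ng')$ at the critical radius $r\approx r_n$. On the arc selected by Lemma~\ref{Onoo}(a) we have $(1-r_n)|g'|\ge\delta$, so on that arc $|f_ng'|\gtrsim (1-r_n)^{-1}|f_n|$; integrating over the arc of width $\approx\delta(1-r_n)$ and using the lower bound on $f_n$ gives $(1-r_n)^{\alpha+1}M_p(r_n,f_ng')\gtrsim 1$, whence $\|T_g(f_n)\|_{p,\infty,\alpha}\gtrsim 1$ uniformly in $n$. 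The key structural point is to pass from individual bumps to \emph{arbitrary bounded linear combinations}: for a bounded scalar sequence $(\lambda_n)\in\ell_\infty$ I would set $f=\sum_n\lambda_nf_n$ and verify that $\|f\|_{p,\infty,\alpha}\approx\sup_n|\lambda_n|$ and $\|T_gf\|_{p,\infty,\alpha}\gtrsim\sup_n|\lambda_n|$, because the annuli on which the $f_n$ (and hence the $T_g(f_n)$) live are essentially separated, so at each radius at most boundedly many terms interfere. This realizes the map $(\lambda_n)\mapsto f$ as an isomorphic embedding of $\ell_\infty$ into $H(p,\infty,\alpha)$ on which $T_g$ is bounded below, i.e.\ $T_g$ fixes a copy of $\ell_\infty$. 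Restricting to sequences $(\lambda_n)\in c_0$ lands the $f$ inside $H(p,0,\alpha)$ (the bumps decay because $\lambda_n\to0$ dominates the uniformly bounded tails), giving the $c_0$ statement; the non-separability of $\mathrm{Im}\,T_g$ is then immediate since a bounded-below image of the non-separable space $\ell_\infty$ is non-separable.

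\medskip

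The hard part, as I see it, is the \emph{disjointness/interference} control needed in the last step: making precise that the rescaled Fej\'er bumps $f_n$ have genuinely (almost) separated supports in the radial variable so that $\|\sum\lambda_nf_n\|_{p,\infty,\alpha}$ and $\|T_g\sum\lambda_nf_n\|_{p,\infty,\alpha}$ are both comparable to $\sup_n|\lambda_n|$. One cannot expect exact disjointness of analytic functions, so this requires a careful choice of a rapidly increasing sequence of radii $r_n$ (passing to a subsequence of the one from Lemma~\ref{Onoo}) such that the scales $(1-r_n)$ decay geometrically and the ``leakage'' of $f_m$ into the annulus of $f_n$ for $m\neq n$ is summably small. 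The upper estimate $\|f\|_{p,\infty,\alpha}\lesssim\sup_n|\lambda_n|$ demands that at each fixed radius $r$ only the few bumps whose critical radius is near $r$ contribute non-negligibly, which is where the geometric separation and the precise decay rate of $M_p(r,G_N)$ away from $|z|\approx 1-1/N$ must be exploited; and the lower estimate for $T_gf$ must survive this same interference, using that the dominant term at radius $r_n$ is the $n$-th one. Once this localization is engineered, the $\ell_\infty$-embedding and the behaviour of $T_g$ on it follow by the estimates sketched above.
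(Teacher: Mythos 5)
Your proposal is correct and follows essentially the same route as the paper: rotated Fej\'er bumps $N_n^{\nu}G_{N_n}(e^{-it_n}z)$ at scales $N_n\approx (1-r_n)^{-1}$ taken along a lacunary subsequence of the radii from Lemma~\ref{Onoo}, an $\ell_\infty$-embedding $(\lambda_n)\mapsto\sum_n\lambda_n f_n$ whose upper bound comes from geometric separation of scales and whose lower bound comes from the arc where $(1-r_n)|g'|\ge\delta$, with the $c_0$ case and non-separability deduced exactly as you indicate. The only cosmetic difference is that the paper factors $T_g=V\circ M_{g'}$ and proves that the multiplication operator $M_{g'}\colon H(p,\infty,\alpha)\to H(p,\infty,\alpha+1)$ fixes a copy of $\ell_\infty$ (using that $V$ is an isomorphism onto $\{f:f(0)=0\}$), which is the same as your use of Lemma~\ref{derivative} to pass between $T_gf$ and $fg'$.
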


\begin{proof} Since $g\in \cb$, we have that 
$T_g: H(p,\infty,\alpha)\to H(p,\infty,\alpha)$ is bounded. 
Once again we write $T_g$ as $V\circ M_{g'},$ with $M_{g'}\colon H(p,\infty,\alpha )\to H(p,\infty,\alpha+1)$ the operator of multiplication by $g'$ and $V\colon H(p,\infty,\alpha+1)\to H(p,\infty,\alpha)$ the Volterra operator.
We know that, if we avoid the constant functions, $V$ is an isomorphism.
More concretely, if we call $X$ the one-codimensional subspace of $H(p,\infty,\alpha)$ defined by
$$
X=\{f\in H(p,\infty,\alpha) : f(0)=0\}\,,
$$
then $V\colon H(p,\infty,\alpha+1)\to X$ is an onto isomorphism whose inverse is the derivation.
Therefore we only need to prove that $M_{g'}\colon H(p,\infty,\alpha)\to H(p,\infty,\alpha+1)$ 
fixes a copy of $\ell_\infty$ if $g\in \cb \setminus \cb_0$.

To do this we need to construct a bounded linear operator
$\Phi\colon\ell_\infty \to H(p,\infty,\alpha)$ such that, for certain $C>0$,
\begin{equation}\label{fijarcopia}
C \|M_{g'}\left(\Phi {\bf a}\right)\|_{p,\infty,\alpha+1} \ge \|{\bf a}\|_{\ell_\infty}\,,
\qquad \text{for all ${\bf a}\in \ell_\infty$.}
\end{equation}
Apply Lemma~\ref{Onoo} to $g$ and fix $\beta>0$ big enough (this $\beta$ will depend on 
$||g||_{\mathcal B}$, $\delta$, $\alpha$ and $p$).
Passing to a subsequence if necessary, we can assume that $r_n\ge 1/2$, for all $n$ and
\begin{equation}\label{loserres}
\frac{1-r_n}{1-r_{n+1}}\ge \beta\,,\qquad \text{for all $n$.}
\end{equation}

Now consider a sequence $(N_n)_n$ of positive integers such that 
\begin{equation}\label{enerre}
N_n(1-r_n)\in [1,2]\,, \qquad\text{for all $n\in\N$.}
\end{equation}
By \eqref{loserres} and \eqref{enerre} we have,  if $\beta$ is big enough,
$$
\frac{N_{n+1}}{N_n}\ge \frac{\beta}{2} \ge 3\,, \qquad\text{for all $n\in\N$.}
$$
Let $	\nu=\alpha +\frac{1}{p} -1=\alpha-\frac{1}{p'}$.
For every $n\in\N$ define the function  $g_n$ by
$$
g_n(z)= N_n^{\nu}G_{N_n}(e^{-it_n} z)\,,\qquad z\in\D.
$$
The $G_N$'s are given in Lemma~\ref{Fejer} and the $t_n$'s in Lemma~\ref{Onoo}. 

Observe that, for every $r\in (0,1)$ and every $t\in [0,2\pi)$, we have
\begin{equation}\label{cotauniforme}
|g_n(re^{it})|\le N_n^{\nu} \sum_{k=N_n}^{3N_n} \widehat{G_{N_n}}(k)r^{N_n}=r^{N_n} N_n^{1+\nu}
= e^{N_n\log r + (\alpha+\frac{1}{p})\log N_n}\,.
\end{equation}
This and the fact that $N_n\ge 3^{n-1}$ yield that, for every ${\bf a}=(a_n)_n\in\ell_\infty$, the series 
$$
\sum_{n=1}^\infty a_n g_n(z)
$$
converges uniformly on compact subsets of $\D$ and its sum defines a function $\Phi{\bf a}$
holomorphic on $\D$. 

Let us see that $\Phi{\bf a}$ belongs to $H(p,\infty,\alpha)$ for all ${\bf a}\in\ell_\infty$. Since 
$\widehat{g_n}(k)=0$, for $k\le N_n$,
we have the estimate, for $r\in (0,1)$ and $n\in\N$,
\begin{equation*}
M_p(r,g_n)\le N_n^{\nu} r^{N_n} \|G_{N_n}\|_{H^p} \le r^{N_n} N_n^{\nu+\frac{1}{p'}} =
r^{N_n} N_n^{\alpha} .
\end{equation*}
Consequently, for ${\bf a}\in\ell_\infty$, we have
\begin{equation*}
M_p(r,\Phi{\bf a})\le \|{\bf a}\|_{\ell_\infty} \sum_{n=1}^\infty M_p(r,g_n)\le 
 \|{\bf a}\|_{\ell_\infty} \sum_{n=1}^\infty r^{N_n} N_n^{\alpha} \,.
\end{equation*}

Take $r\in (0,1)$ and, putting $r_0=0$, define $l\in\N$ by the condition $r_{l-1}<r\le r_l$.
Thus, in the case $l\ge 2$, we have
\begin{align*}
(1-r)^{\alpha}\sum_{n=1}^{l-1} r^{N_n} N_n^{\alpha} &\le (1-r_{l-1})^{\alpha}\sum_{n=1}^{l-1} N_n^{\alpha}   
=[(1-r_{l-1})N_{l-1}]^{\alpha} \sum_{n=1}^{l-1} \left(\frac{N_n}{N_{l-1}}\right)^{\alpha} \\
&\le 2^{\alpha}\sum_{k=0}^\infty \left(\frac{\beta}{2}\right)^{-k\alpha}:=A<+\infty\,.
\end{align*}
For $n\ge l$ we have, using $\log r\le r-1$ and $N_n(1-r)\ge N_n(1-r_n)\ge 1$,
\begin{align*}
\log\bigl[ (1-r)^{\alpha} r^{N_n} N_n^{\alpha}\bigr]&\le 
\alpha\log\bigl[N_n(1-r)\bigr]+ N_n(r-1) \\
&\le C_\alpha-\frac{N_n(1-r)}{2} \le C_\alpha-\frac{N_n(1-r_l)}{2}\,,
\end{align*} 
for certain $C_\alpha>0$ satisfying $\alpha\log x\le C_\alpha + \frac{x}{2}$, for all $x\ge 1$.
Therefore
$$
\sum_{n=l}^\infty (1-r)^{\alpha} r^{N_n} N_n^{\alpha} \le \sum_{n=l}^\infty e^{C_\alpha} e^{-\frac{\beta^{n-l}}{2}}
=e^{C_\alpha}\sum_{j=0}^\infty e^{-\frac{\beta^j}{2}}:=B<+\infty\,.
$$
Putting all together we have, for all $r\in(0,1)$, 
$$
(1-r)^{\alpha}M_p(r,\Phi{\bf a})\le (A+B)\|{\bf a}\|_{\ell_\infty}\,.
$$
Taking the supremum over $r$ we see that $\Phi{\bf a}\in H(p,\infty,\alpha)$
and $\Phi\colon\ell_\infty\to H(p,\infty,\alpha)$ is a bounded linear operator.

It remains to prove \eqref{fijarcopia}. We can assume ${\bf a}=(a_n)_n\in  \ell_\infty$ and
 $\|{\bf a}\|_{\ell_\infty}=1$. Pick $l\in \N$ such that
$|a_l|\ge 1/2$. We are going to prove that, for certain $\eta>0$ we have
\begin{equation}\label{inferior}
|\Phi{\bf a}(r_le^{i(t+t_l)})|\ge \eta (1-r_l)^{-\frac{1}{p}-\alpha}\,,\qquad
\text{for all $t\in [-\delta(1-r_l),\delta(1-r_l)]$.}
\end{equation}
This and Lemma~\ref{Onoo}(a) yield 
$$
|M_{g'}(\Phi{\bf a})(r_le^{i(t+t_l)})|\ge \eta\delta (1-r_l)^{-\frac{1}{p}-1-\alpha}\,,\qquad
\text{for all $t\in [-\delta(1-r_l),\delta(1-r_l)]$,}
$$
and consequently we get \eqref{fijarcopia} since
$$
 \|M_{g'}\bigl(\Phi {\bf a}\bigr)\|_{p,\infty,\alpha+1}\ge
 (1-r_l)^{1+\alpha}M_p(r_l,M_{g'}(\Phi{\bf a}))\ge \eta\delta\left(\frac{\delta}{\pi}\right)^{\frac{1}{p}}:=\frac{1}{C}\,.
$$

Let us prove \eqref{inferior}. 
By the definition of $\Phi{\bf a}$, for every $t\in \mathbb{R}$, we have
\begin{equation}\label{mamapata}
\begin{split}
|\Phi{\bf a}(r_le^{i(t+t_l)})|
&\ge |a_l||g_l(r_le^{i(t+t_l)})|-\sum_{n\ne l}|a_n||g_n(r_le^{i(t+t_l)})| \\
&\ge \frac{1}{2}\; \bigl|g_l(r_le^{i(t+t_l)})\bigr| -\sum_{n\ne l} M_\infty(r_l,g_n)\,.
\end{split}
\end{equation}
By \eqref{enerre},
for  $t\in [-\delta(1-r_l),\delta(1-r_l)]$ and $k\le 3N_l$, we have $|kt|\le 6\delta\le \pi/3$.
and $\cos(kt)\ge 1/2$. Therefore
\begin{equation}\label{patito1}
\begin{split}
\bigl|g_l(r_le^{i(t+t_l)})\bigr| &\ge \text{Re}\bigl(g_l(r_le^{i(t+t_l)})\big)
= N_l^{\nu}\sum_{k=N_l}^{3N_l} r_l^k \widehat{G_{N_l}}(k)\cos(kt) \\
&\ge \frac{N_l^\nu r_l^{3N_l}}{2} \sum_{k=N_l}^{3N_l} \widehat{G_{N_l}}(k)\ge \kappa N_l^{\nu+1} \,,
\end{split}
\end{equation}
for certain $\kappa>0$.

Using \eqref{cotauniforme} we can estimate $M_\infty(r_l,g_n)\le 2r_l^{N_n} N_n^{\nu+1}$. Thus, 
\begin{equation}\label{patito2}
\begin{split}
\sum_{n=1}^{l-1}M_\infty(r_l,g_n)&\le \sum_{n=1}^{l-1}2N_n^{\nu+1}\le 2N_l^{1+\nu}\sum_{n=1}^{l-1}\left(\frac{2}{\beta}\right)^{(l-n)(1+\nu)}\\
&\le 4 \left(\frac{2}{\beta}\right)^{\nu+1} N_l^{\nu+1}\le \frac{\kappa N_l^{\nu+1}}{4}\,,
\end{split}
\end{equation}
if $\beta$ is big enough.

We use $r_l^{N_l}= \exp( N_l\log r_l  )\le \exp\bigl( N_l(r_l-1)   \bigr) \le e^{-1}$, to obtain, if $l<n$,
$$
M_\infty(r_l,g_n)\le 2r_l^{N_n} N_n^{\nu+1}\le 2N_l^{\nu+1}  \Bigl(\frac{N_n}{N_l}\Bigr)^{\nu+1} \exp\Bigl(-\frac{N_n}{N_l}\Bigr) \,.
$$
If $\beta$ is big enough we have $x^{-1}\ge x^{1+\nu}e^{-x}$ for $x\ge \beta$. Thus 
\begin{equation}\label{patito3}
\sum_{n=l+1}^\infty M_\infty(r_l,g_n)\le2N_l^{\nu+1} \sum_{n=l+1}^\infty \Bigl(\frac{N_l}{N_n}\Bigr)   \le 2N_l^{1+\nu}\sum_{n=l+1}^\infty\beta^{l-n}
\le \frac{\kappa N_l^{\nu+1}}{4}\,,
\end{equation}
if $\beta$ is big enough.

Finally we collect all the estimates. For $t\in [-\delta(1-r_l),\delta(1-r_l)]$, 
by \eqref{mamapata}, \eqref{patito1}, \eqref{patito2} and \eqref{patito3},
\begin{equation*}
\begin{split}
|\Phi{\bf a}(r_le^{i(t+t_l)})|& \ge \frac{1}{2}\; \bigl|g_l(r_le^{i(t+t_l)})\bigr| -\sum_{n=1}^{l-1} M_\infty(r_l,g_n)-\sum_{n=l+1}^\infty M_\infty(r_l,g_n) \\
&\ge \left(\kappa- \frac{\kappa}{4} - \frac{\kappa}{4}\right) N_l^{1+\nu}=\frac{\kappa}{2} N_l^{\alpha + \frac{1}{p}}
\ge \frac{\kappa}{2^{\alpha+\frac{1}{p}+1}} (1-r_l)^{-\alpha-\frac{1}{p}}\,.
\end{split}
\end{equation*}
We have proved \eqref{inferior} and thus the theorem follows for the space $H(p,\infty,\alpha)$. Since the functions $g_n$ are polynomials, a similar argument shows that if ${\bf a}\in c_0$, then $\Phi{\bf a} \in H(p,0,\alpha)$. This finishes the proof.   
\end{proof}

%

%

With this we are now ready to characterize the boundedness of $T_g: H(p,\infty,\alpha)\to H(p,0,\alpha)$ and the compactness of $T_g$ on $H(p,\infty,\alpha)$ and on $H(p,0,\alpha).$ All of them are equivalent to $g\in\mathcal{B}_0.$

\begin{cor}\label{integral_compact}  Let $g$ be a function in the Bloch space $\mathcal B$.  The following are equivalent:
\begin{itemize}
\item[(a)] $T_g: H(p,\infty,\alpha)\to H(p,\infty,\alpha)$ is compact;
\item[(b)] $T_g: H(p,\infty,\alpha)\to H(p,\infty,\alpha)$ is weakly compact;
\item[(c)] $T_g: H(p,\infty,\alpha)\to H(p,0,\alpha)$ is bounded;
\item[(d)] $T_g: H(p,0,\alpha)\to H(p,0,\alpha)$ is compact;
\item[(e)] $T_g: H(p,0,\alpha)\to H(p,0,\alpha)$ is weakly compact;
\item[(f)] $T_g: H(p,\infty,\alpha)\to H(p,\infty,\alpha)$ does not fix a copy of $\ell_\infty$;
\item[(g)] $T_g: H(p,0,\alpha)\to H(p,0,\alpha)$ does not fix a copy of $c_0$;
\item[(h)] $g\in\mathcal{B}_0.$
\end{itemize}
\end{cor}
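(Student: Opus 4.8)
The plan is to prove the eight conditions equivalent by building a cycle of implications with condition (h), namely $g\in\cb_0$, as its hub. The engine for every reverse implication is Theorem~\ref{copy_l_infty}: its contrapositive states that if $T_g$ does \emph{not} fix a copy of $\ell_\infty$ on $H(p,\infty,\alpha)$ (resp.\ a copy of $c_0$ on $H(p,0,\alpha)$), then $g$ must belong to $\cb_0$. This makes (f)$\Rightarrow$(h) and (g)$\Rightarrow$(h) immediate. Everything else is assembled from (i) a direct compactness estimate valid when $g\in\cb_0$, and (ii) standard Banach-space facts relating compactness, weak compactness, and the fixing of copies of $c_0$ and $\ell_\infty$.

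First I would establish the three forward implications out of (h). For (h)$\Rightarrow$(c): if $f\in H(p,\infty,\alpha)$ and $g\in\cb_0$, then $(1-r)^{\alpha+1}M_p(r,g'f)\le \bigl[(1-r)M_\infty(r,g')\bigr]\bigl[(1-r)^{\alpha}M_p(r,f)\bigr]$, where the first factor tends to $0$ and the second stays bounded; hence $g'f\in H(p,0,\alpha+1)$, and by Lemma~\ref{derivative}(\ref{HLo}) together with the factorization $T_g=V\circ M_{g'}$ we get $T_gf\in H(p,0,\alpha)$. For (h)$\Rightarrow$(a) and (h)$\Rightarrow$(d): since bounded subsets of these spaces are normal families (by the growth estimate of Proposition~\ref{growth} and Montel's theorem), it suffices to check that $\|T_gf_n\|\to0$ for every bounded sequence $(f_n)$ converging to $0$ uniformly on compact subsets of $\D$. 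Writing $\|T_gf_n\|_{p,\infty,\alpha}\lesssim\|g'f_n\|_{p,\infty,\alpha+1}$ (boundedness of $V$, Lemma~\ref{derivative}(\ref{HLO})) and splitting the supremum over $r$ at a radius $R$ where $(1-r)|g'|<\varepsilon$, the argument is identical to the compactness computation already carried out for $q<\infty$: on $|z|\le R$ one uses $f_n\to0$ uniformly, and on $|z|>R$ the smallness of $(1-r)|g'|$.

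Next come the soft implications. Compactness trivially gives weak compactness, so (a)$\Rightarrow$(b) and (d)$\Rightarrow$(e). A weakly compact operator cannot fix a copy of $c_0$ or of $\ell_\infty$: restricting it to such a copy would be an isomorphism onto its image, forcing the non-reflexive space $c_0$ (resp.\ $\ell_\infty$) to have relatively weakly compact unit ball, a contradiction; this yields (b)$\Rightarrow$(f) and (e)$\Rightarrow$(g). Finally, (c)$\Rightarrow$(f) is a separability argument: if $T_g$ maps $H(p,\infty,\alpha)$ into $H(p,0,\alpha)$, then, regarded as an operator on $H(p,\infty,\alpha)$, it has range inside the separable space $H(p,0,\alpha)$ (the closure of the polynomials), whereas an operator fixing a copy of $\ell_\infty$ has non-separable range. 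Chaining (h)$\Rightarrow$(a)$\Rightarrow$(b)$\Rightarrow$(f)$\Rightarrow$(h), then (h)$\Rightarrow$(d)$\Rightarrow$(e)$\Rightarrow$(g)$\Rightarrow$(h), and finally (h)$\Rightarrow$(c)$\Rightarrow$(f)$\Rightarrow$(h), closes every loop and establishes the equivalence of (a)--(h).

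The conceptual heavy lifting has already been done in Theorem~\ref{copy_l_infty}; within the corollary itself there is no single hard step, only the task of arranging the implications so that each of (a)--(g) both follows from and returns to (h). The point demanding the most care is making the compactness criterion (h)$\Rightarrow$(a),(d) rigorous on the non-separable space $H(p,\infty,\alpha)$: one must justify the normal-families characterization of compactness (a sequence in the unit ball has a subsequence converging uniformly on compacts to an element of a bounded ball, and on differences the estimate applies) rather than appeal to sequential compactness of the unit ball directly, but the estimate itself is routine and mirrors the $q<\infty$ case.
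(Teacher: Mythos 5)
Your proposal is correct and follows essentially the same route as the paper: the hard implications (f)$\Rightarrow$(h) and (g)$\Rightarrow$(h) are delegated to Theorem~\ref{copy_l_infty}, the implications (h)$\Rightarrow$(a),(c),(d) are proved via the factorization $T_g=V\circ M_{g'}$ and the same split-the-radius compactness estimate, and the remaining links ((a)$\Rightarrow$(b)$\Rightarrow$(f), (d)$\Rightarrow$(e)$\Rightarrow$(g), (c)$\Rightarrow$(f) by separability of $H(p,0,\alpha)$) are the same soft Banach-space facts the paper invokes. The only difference is that you spell out details the paper leaves implicit (why weakly compact operators cannot fix copies of $c_0$ or $\ell_\infty$, and the normal-families justification of the compactness criterion), which is fine.
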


\begin{proof}
Let us assume that $g\in\mathcal{B}_0.$
Take $(f_n)$ a sequence in the unit ball of $H(p,\infty,\alpha)$ that goes to zero uniformly on compact subsets of the unit disk. Let us fix $\varepsilon>0$. Then there is $R<1$ such that $|g'(z)|(1-|z|)<\varepsilon$ whenever $|z|\geq R$. Moreover, there is $N_0\in \N$ such that if $n\geq N_0$ we have that $|f_n(z)|\leq \varepsilon/||g||_\cb$ for all $|z|\leq R$. 
On the one hand, if $r\leq R$, then 
\begin{align*}
(1-r)^{\alpha+1}M_p(r,g'f_n)&\leq(1-r)^{\alpha+1}\sup_{\theta\in [0,2\pi]}|g'(re^{i\theta})|M_p(r,f_n)\\
&\leq ||g||_\cb (1-r)^{\alpha}M_p(r,f_n)\leq \varepsilon.
\end{align*}
On the other hand, if $r>R$, then 
\begin{align*}
(1-r)^{\alpha+1}M_p(r,g'f_n)&\leq(1-r)^{\alpha+1}\sup_{\theta\in [0,2\pi]}|g'(re^{i\theta})|M_p(r,f_n)\\
&\leq ||f_n||_{p,\infty, \alpha} (1-r)\sup_{\theta\in [0,2\pi]}|g'(re^{i\theta})|\leq \varepsilon.
\end{align*}
Thus, $\lim ||g'f_n||_{p,\infty, \alpha+1}=0$. This implies that $M_{g'}:  H(p,\infty,\alpha)\to H(p,\infty,\alpha+1)$ and $M_{g'}:  H(p,0,\alpha)\to H(p,0,\alpha+1)$ are compact. So, (h) implies (a) and (d). 

A similar argument shows that (h) implies (c). 

Being trivial that (a) $\Rightarrow$ (b) $\Rightarrow$ (f), that (c) $\Rightarrow $ (f) (since $H(p,0,\alpha)$ is separable) and that (d) $\Rightarrow$ (e) $\Rightarrow$ (g), we only have to prove that both (f) and (g) imply (h). But this  is just Theorem \ref{copy_l_infty}.
\end{proof}

To conclude this section on the integral operator, we give an application to the inclusion of exponential functions in the space. 

\begin{prop}
Let $X$ be a Banach space of analytic functions and $g$ an analytic function on the unit disk. 
\begin{itemize}
\item If $T_g:X\to X$ is bounded, then $e^{sg}\in X$ for some $s>0.$
\item If $T_g:X\to X$ is compact, then $e^{sg}\in X$ for every $s>0.$
\end{itemize}
\end{prop}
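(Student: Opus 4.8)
The plan is to exploit the fundamental relationship between the integral operator $T_g$ and the exponential function $e^{sg}$, namely that $e^{sg}$ is (up to a constant) a fixed point of a shifted version of $T_g$. The key computation is that if we set $u=e^{sg}$, then $u'=sg'u=sg'e^{sg}$, so that $T_g(u)(z)=\int_0^z u(\zeta)g'(\zeta)\,d\zeta = \frac{1}{s}(e^{sg(z)}-e^{sg(0)})$. Rearranging, this gives the operator identity $e^{sg}=e^{sg(0)}+sT_g(e^{sg})$. This suggests solving for $e^{sg}$ via a Neumann-type series in $T_g$. I would first establish this identity carefully by differentiation, and then try to realize $e^{sg}$ as the sum of a convergent series of the form $\sum_{k\ge 0} s^k T_g^k(\mathbf{1})$, where $\mathbf{1}$ is the constant function $1$.

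The heart of the argument is the following series expansion. Iterating $T_g$ on the constant function $1$, I expect to find
\begin{equation*}
T_g^k(\mathbf 1)(z)=\frac{g(z)^k}{k!}\quad\text{(after normalizing $g(0)=0$),}
\end{equation*}
which one verifies by induction: differentiating $g^k/k!$ gives $g^{k-1}g'/(k-1)!$, so $T_g(g^{k-1}/(k-1)!)=g^k/k!$ exactly. Consequently the formal series $\sum_{k\ge 0} s^k T_g^k(\mathbf 1)=\sum_{k\ge 0}\frac{(sg)^k}{k!}=e^{sg}$. Thus the whole matter reduces to showing that this series converges in the norm of $X$ for an appropriate range of $s$, and that its sum therefore lies in $X$. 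Here I would use that the constant functions lie in $X$ (or reduce to this; $T_g^0(\mathbf 1)=\mathbf 1$ must be interpreted in $X$, so I should assume or arrange $\mathbf 1\in X$) and that $T_g$ is a bounded operator.

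For the first bullet, boundedness of $T_g$ gives $\|T_g^k(\mathbf 1)\|_X\le \|T_g\|^k\|\mathbf 1\|_X$, so the series $\sum_k s^k\|T_g^k(\mathbf 1)\|_X\le \|\mathbf 1\|_X\sum_k (s\|T_g\|)^k$ converges whenever $s<1/\|T_g\|$; for any such fixed $s>0$ the series converges absolutely in $X$, whence $e^{sg}\in X$. For the second bullet, compactness of $T_g$ forces its spectral radius to control the growth of $\|T_g^k\|^{1/k}$; more precisely, for a compact operator on a Banach space the spectral radius $r(T_g)=\lim_k\|T_g^k\|^{1/k}$, and I expect this to be $0$ (a Volterra-type operator, being compact and quasinilpotent in this setting, has spectral radius zero). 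Granting $r(T_g)=0$, we get $\|T_g^k(\mathbf 1)\|_X^{1/k}\to 0$, so $\sum_k s^k T_g^k(\mathbf 1)$ converges for every $s>0$, yielding $e^{sg}\in X$ for all $s>0$.

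The main obstacle I anticipate is the spectral-radius-zero claim in the compact case: proving that a compact $T_g$ is quasinilpotent requires knowing that the only possible eigenvalue is $0$, which follows from the structure of $T_g$ (any eigenfunction equation $T_g f=\lambda f$ with $\lambda\ne 0$ forces, upon differentiation, $fg'=\lambda f'$ and the boundary condition $T_g f(0)=0$, leading to $f\equiv 0$). I would need to verify this eigenvalue analysis is valid in $X$, taking care that differentiation and the pointwise identities are legitimate for elements of $X\subseteq \mathcal H(\mathbb D)$. A secondary technical point is ensuring $\mathbf 1\in X$ and that the normalization $g(0)=0$ (replacing $g$ by $g-g(0)$, which changes neither $T_g$ nor membership of $e^{sg}$ up to the constant factor $e^{sg(0)}$) is harmless.
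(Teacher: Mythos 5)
Your proposal is correct and follows essentially the same route as the paper: the identity $T_g^k(\mathbf 1)=g^k/k!$ (after normalizing $g(0)=0$), the Neumann-type series $\sum_k s^kT_g^k(\mathbf 1)=e^{sg}$, and quasinilpotency of a compact $T_g$ obtained by showing it has no nonzero eigenvalues. The only cosmetic difference is that for the first bullet you take $s<1/\|T_g\|$ while the paper takes $s<1/r(T_g)$; both guarantee convergence of the series, so the arguments are essentially identical.
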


\begin{proof}
Let $g$ be an analytic function on $\mathbb{D}$ such that $T_g:X\to X$ is bounded and suppose $g(0)=0.$ Then, $T_g(1)=g$ and in general $T_g^n(1)=\frac{1}{n!}g^n.$ Let $0<s<1/r(T_g)$ where $r(T_g)$ is the spectral radius of the operator $T_g.$ Hence, $\sum_{n=0}^\infty s^n T_g^n$ converges in the operator norm. With $f\equiv 1$ we have that $$\sum_{n=0}^\infty s^n T_g^n(1)=\sum_{n=0}^\infty \frac{s^ng^n}{n!}=e^{sg}\in X$$ for some $s>0.$

If $T_g:X\to X$ is compact, its spectrum is the set of its eigenvalues and $\{0\}.$ Let $\lambda\neq0,$ then $T_gf=\lambda f$ implies $f(0)=0.$ Differentiating we get $f(z)g'(z)=\lambda f'(z),$ that is, $f'(0)=0,$ and, in general, $f^{(n)}(0)=0$ for every $n\in\mathbb{N}.$ This means that if $f$ is an eigenfunction of $T_g,$ then $f\equiv 0,$ that is, $T_g$ has no eigenvalues $\lambda\neq0,$ and since it is compact, $r(T_g)=0.$ From the first part we have that $e^{sg}\in X$ for every $s>0.$ 
\end{proof}

In our mixed norm spaces, the last result becomes:
\begin{cor}
Let $g$ be an analytic function on the unit disk. 
\begin{itemize}
\item If $g\in\mathcal{B},$ then $e^g\in H(p,\infty,\alpha)$ for some $p>0.$
\item If $g\in\mathcal{B}_0,$ then $e^g\in H(p,\infty,\alpha)$ for every $p>0.$
\end{itemize}
\end{cor}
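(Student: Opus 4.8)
The plan is to deduce both statements from the preceding Proposition applied to the concrete space $X=H(p,\infty,\alpha)$, after identifying boundedness and compactness of $T_g$ there via Proposition~\ref{integralO} and Corollary~\ref{integral_compact}. By those two results, $g\in\mathcal{B}$ is exactly the condition for $T_g\colon H(p,\infty,\alpha)\to H(p,\infty,\alpha)$ to be bounded, and $g\in\mathcal{B}_0$ is exactly the condition for it to be compact; moreover both characterizations hold for every $p>0$. Thus the abstract dichotomy ``bounded $\Rightarrow e^{sg}\in X$ for some $s$'' versus ``compact $\Rightarrow e^{sg}\in X$ for every $s$'' translates directly into a dichotomy between the two Bloch classes.

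First I would dispatch the little-Bloch case, which is the clean one. Fix an arbitrary $p>0$. Since $g\in\mathcal{B}_0$, Corollary~\ref{integral_compact} gives that $T_g$ is compact on $H(p,\infty,\alpha)$, so by the preceding Proposition $e^{sg}\in H(p,\infty,\alpha)$ for every $s>0$. Choosing in particular $s=1$ yields $e^{g}\in H(p,\infty,\alpha)$, and since $p>0$ was arbitrary this proves the second statement for every $p$. Note that it is precisely the compactness (equivalently $r(T_g)=0$) that licenses the choice $s=1$.

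For $g\in\mathcal{B}$ only boundedness is available, so the preceding Proposition produces $e^{sg}\in H(p_0,\infty,\alpha)$ for some $s>0$ (an $s$ below $1/r(T_g)$), and the real issue is to pass from $e^{sg}$ to $e^{g}$. Here I would use the homogeneity $|e^{sg}(z)|=|e^{g}(z)|^{s}$, which gives the identity $M_{p_0}(r,e^{sg})=M_{sp_0}(r,e^{g})^{s}$; substituting it into the definition of the norm shows that membership of $e^{sg}$ in $H(p_0,\infty,\alpha)$ is equivalent to $e^{g}\in H(sp_0,\infty,\alpha/s)$, so $e^{g}$ does lie in a mixed norm space $H(p,\infty,\cdot)$ with $p=sp_0$.

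The main obstacle is exactly this rescaling: the homogeneity forces the weight exponent to change from $\alpha$ to $\alpha/s$, so the abstract argument does not, by itself, place $e^{g}$ in the space with the prescribed exponent $\alpha$. To land in $H(p,\infty,\alpha)$ for the given $\alpha$ I expect one must either (i) secure the value $s=1$ by exhibiting a space on which $r(T_g)<1$, after which the Jensen inequality $M_p(r,e^{g})\le M_p(r,e^{sg})^{1/s}$ upgrades $e^{sg}\in H(p,\infty,\alpha)$ to $e^{g}\in H(p,\infty,\alpha)$ whenever $s\ge 1$; or (ii) argue directly, using that the Bloch growth estimate $|e^{g}(z)|\le C(1-|z|)^{-\|g\|_{\mathcal{B}}/2}$ together with the exponential integrability of Bloch functions keeps $M_p(r,e^{g})$ bounded once $p\|g\|_{\mathcal{B}}$ is small enough, so that $e^{g}\in H^p\subseteq H(p,\infty,\alpha)$ for every sufficiently small $p$. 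The extremal example $g=\tfrac{\|g\|_{\mathcal{B}}}{2}\log\frac{1}{1-z}$, for which $e^{g}=(1-z)^{-\|g\|_{\mathcal{B}}/2}\in H^p$ precisely when $p<2/\|g\|_{\mathcal{B}}$, indicates that small $p$ is genuinely necessary and accounts for the asymmetry ``some $p$'' versus ``every $p$'' between the two statements.
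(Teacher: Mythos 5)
Your route is the paper's own: the corollary is stated there without proof, as an immediate consequence of the preceding Proposition together with Proposition~\ref{integralO} ($T_g$ bounded on $H(p,\infty,\alpha)$ iff $g\in\mathcal{B}$) and Corollary~\ref{integral_compact} ($T_g$ compact iff $g\in\mathcal{B}_0$). Your treatment of the $\mathcal{B}_0$ case (compactness kills the spectral radius, so $s=1$ is allowed, for every $p$ and every $\alpha$) is exactly the intended argument. Moreover, the rescaling obstacle you isolate in the $\mathcal{B}$ case is genuine: the Proposition only gives $e^{sg}\in H(p_0,\infty,\alpha)$ for some $s>0$, and your identity $M_{p_0}(r,e^{sg})=M_{sp_0}(r,e^{g})^{s}$ places $e^{g}$ in $H(sp_0,\infty,\alpha/s)$, with the weight rescaled. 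That is all the paper's implicit argument yields; the corollary's ``for some $p>0$'' must therefore be read as membership in \emph{some} space of the mixed-norm family, with the weight exponent not prescribed in advance. Under that reading your proof was already complete at the point where you obtained $e^{g}\in H(sp_0,\infty,\alpha/s)$, and the ``main obstacle'' paragraph is a (correct) observation about what the argument does \emph{not} give, rather than a gap you needed to close.

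Where your proposal goes wrong is in the two repairs you sketch for the fixed-$\alpha$ reading. Repair (ii) rests on a false claim: uniform-in-$r$ exponential integrability is a John--Nirenberg/BMOA phenomenon (this is precisely Pommerenke's argument in \cite{Pom}), not a Bloch one, and it is \emph{not} true that $g\in\mathcal{B}$ implies $e^{g}\in H^p$ for small $p$. For Bloch functions one only has a Makarov-type estimate $M_p^p(r,e^{g})\lesssim (1-r)^{-cp^2\|g\|_{\mathcal{B}}^2}$, which grows as $r\to1^-$, and this growth is unavoidable: for the lacunary Bloch function $g(z)=\sum_k z^{2^k}$, the Salem--Zygmund central limit theorem shows that $\int_0^{2\pi}e^{p\,\mathrm{Re}\,g(re^{i\theta})}\,d\theta\to\infty$ as $r\to1^-$ for \emph{every} $p>0$, so $e^{g}$ belongs to no Hardy space at all. (The Makarov bound does give $e^{g}\in H(p,\infty,\alpha)$ once $p\le c\,\alpha/\|g\|_{\mathcal{B}}^2$, so the fixed-$\alpha$ statement is true, but by tools entirely outside this paper.) Repair (i) also cannot be completed with the paper's ingredients: the only quantitative control available is of the form $\|T_g\|\le C(q,\beta)\,\|g\|_{\mathcal{B}}$ on $H(q,\infty,\beta)$, and the elementary constant one can extract (for $q\ge1$, via Minkowski and Hardy--Littlewood) is $C=1/\beta$; this yields $r(T_g)<1$ only when $\beta>\|g\|_{\mathcal{B}}$, i.e.\ again in a weight one cannot prescribe, and genuinely better spectral-radius bounds (decaying as $q\to0$) are not proved in the paper. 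So: your derivation coincides with the paper's up to and including the weak statement, but you should flag the fixed-$\alpha$ version as requiring extra input, and strike the assertion $e^{g}\in H^p$.
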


We can also prove the converse. This result appears in \cite{Pom} in the case of the Hardy space $H^2$ as a first step to prove that $g\in BMOA.$ For $g$ analytic on the unit disk we denote by $g_\zeta$ the function $g_\zeta(z)=g(\phi_\zeta(z))-g(\zeta),$ $z\in\mathbb{D},$ with $\phi_\zeta(z)=\frac{z+\zeta}{1+\overline{\zeta}z}$ the automorphism of the disk associated with $\zeta.$

\begin{prop}
Let $X$ be a Banach space of analytic functions such that the point evaluation functionals $\delta_z$ are bounded for every $z	\in\mathbb{D}.$ If $\sup_{\zeta\in\mathbb{D}}\|e^{g_\zeta}\|_X<\infty$ then $g\in\mathcal{B}.$ In particular, if $X$ is conformal invariant and $e^g\in X$, then $g\in \mathcal B$. 
\end{prop}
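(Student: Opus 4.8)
The plan is to extract the Bloch quantity $(1-|\zeta|^2)|g'(\zeta)|$ directly from the first Taylor coefficient of the \emph{normalized} function $e^{g_\zeta}$ and then to dominate that coefficient by $\|\cdot\|_X$. The point of subtracting $g(\zeta)$ in the definition of $g_\zeta$ is exactly that it anchors the function at the origin: since $\phi_\zeta(0)=\zeta$ we have $g_\zeta(0)=g(\zeta)-g(\zeta)=0$, hence $e^{g_\zeta}(0)=1$. Differentiating, $\phi_\zeta'(0)=1-|\zeta|^2$ gives $g_\zeta'(0)=g'(\phi_\zeta(0))\,\phi_\zeta'(0)=(1-|\zeta|^2)g'(\zeta)$, and therefore
$$
\widehat{e^{g_\zeta}}(1)=\bigl(e^{g_\zeta}\bigr)'(0)=g_\zeta'(0)\,e^{g_\zeta(0)}=(1-|\zeta|^2)\,g'(\zeta).
$$
Thus the entire Bloch seminorm of $g$ is read off from the linear Taylor coefficients of the family $\{e^{g_\zeta}\}_{\zeta\in\mathbb{D}}$.

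The crucial step is then to bound the coefficient functional $f\mapsto\widehat f(1)=f'(0)$ by the norm of $X$. I would first upgrade the pointwise boundedness of the $\delta_z$ to continuity of the inclusion $X\hookrightarrow\mathcal{H}(\mathbb{D})$: its graph is closed, because if $f_n\to f$ in $X$ and $f_n\to h$ uniformly on compacta, then $\delta_z(f_n)\to\delta_z(f)$ and $f_n(z)\to h(z)$ force $f=h$; the closed graph theorem (between the Banach space $X$ and the Fréchet space $\mathcal{H}(\mathbb{D})$) then applies. Continuity yields, for any fixed $r\in(0,1)$, a constant $C_r$ with $\sup_{|w|=r}|f(w)|\le C_r\|f\|_X$, and a Cauchy estimate gives $|f'(0)|\le (C_r/r)\|f\|_X$. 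Writing $C=C_r/r$ and combining with the identity above,
$$
(1-|\zeta|^2)\,|g'(\zeta)|=\bigl|\widehat{e^{g_\zeta}}(1)\bigr|\le C\,\|e^{g_\zeta}\|_X\le C\,\sup_{\zeta\in\mathbb{D}}\|e^{g_\zeta}\|_X,
$$
and taking the supremum over $\zeta$ shows $g\in\mathcal{B}$.

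For the concluding assertion I would reduce the conformally invariant case to the hypothesis just used. From $g_\zeta=g\circ\phi_\zeta-g(\zeta)$ one has $e^{g_\zeta}=e^{-g(\zeta)}\,C_{\phi_\zeta}(e^g)$, so if $e^g\in X$ and $X$ is conformally invariant then each $e^{g_\zeta}$ lies in $X$ and one only needs the uniform bound $\sup_\zeta\|e^{g_\zeta}\|_X<\infty$ before invoking the first part. I expect this last point to be the real obstacle of the statement: the scalar factor $e^{-g(\zeta)}$ has modulus $e^{-\text{Re}\,g(\zeta)}$, which a priori is not controlled, so the uniform bound is not formal and must come precisely from the way conformal invariance ties $\|C_{\phi_\zeta}(e^g)\|_X$ to the base-point value $|e^{g(\zeta)}|$; this is exactly the place where the precise meaning of ``conformal invariant'' in the hypothesis enters, whereas the two preceding steps are a short computation together with a standard closed-graph and Cauchy argument.
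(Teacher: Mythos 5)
For the main assertion your argument is correct and essentially identical to the paper's: both hinge on the identity $(e^{g_\zeta})'(0)=(1-|\zeta|^2)g'(\zeta)$ and on bounding the functional $f\mapsto f'(0)$ by $\|\cdot\|_X$, the paper obtaining this bound from the Banach--Steinhaus theorem applied to $\{\delta_z:|z|\le r\}$ followed by a Cauchy estimate, you from the closed graph theorem applied to $X\hookrightarrow\mathcal{H}(\mathbb{D})$ followed by the same Cauchy estimate --- interchangeable standard steps. As for the ``in particular'' clause, the paper's proof never addresses it at all, and the obstacle you flag is genuine: the scalar $e^{-g(\zeta)}$ in $e^{g_\zeta}=e^{-g(\zeta)}\,(e^g)\circ\phi_\zeta$ is not controlled by norm-invariance under automorphisms, and under that naive reading the clause is actually false --- take $X=H^\infty$ and $g(z)=-\frac{1+z}{1-z}$, so $e^g$ is the atomic singular inner function, which lies in $H^\infty$, while $(1-r^2)|g'(r)|=\frac{2(1+r)}{1-r}\to\infty$ shows $g\notin\mathcal{B}$ --- so the final clause requires a more restrictive meaning of conformal invariance (one yielding $\sup_\zeta\|e^{g_\zeta}\|_X<\infty$), exactly as you suspected.
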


\begin{proof}
Since $X$ is a Banach space and  every point evaluation functional is bounded, they are uniformly bounded on compact subsets of the unit disk (using the Banach--Steinhaus theorem). Therefore, every point evaluation of the derivative is bounded. Write $k(f)=f'(0)$. 
Taking in particular the evaluation of the derivative at zero we have 
$$|k(e^{g_\zeta})|=|\phi_\zeta'(0)g'(\phi_\zeta(0))e^{g_\zeta(0)}|=(1-|\zeta|^2)|g'(\zeta)|\leq\|k\|\|e^{g_\zeta}\|_X.$$
From here, $$\sup_{\zeta\in\mathbb{D}}(1-|\zeta|^2)|g'(\zeta)|\leq\sup_{\zeta\in\mathbb{D}}\|k\|\|e^{g_\zeta}\|_X<\infty$$ and we conclude that $g\in\mathcal{B}.$
\end{proof}

\section{Semigroups of composition operators on $H(p,\infty,\alpha)$ and $H(p,0,\alpha)$}

Before considering the ``big-Oh'' space $H(p,\infty,\alpha),$ we will study the semigroups on the ``little-oh'' space $H(p,0,\alpha),$ where polynomials are dense.

\begin{prop}
Every semigroup of analytic functions generates a strongly continuous semigroup of operators on $H(p,0,\alpha)$, but no non-trivial semigroup of analytic functions induces a uniformly continuous semigroup of composition operators on it.
\end{prop}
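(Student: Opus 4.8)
The plan is to prove the two assertions separately, following exactly the scheme of Theorem~\ref{finite}, the only novelty being that $H(p,0,\alpha)$ carries the supremum norm $\|\cdot\|_{p,\infty,\alpha}$ rather than an integral norm. The structural fact I will exploit throughout is that polynomials are dense in $H(p,0,\alpha)$, already recorded as a consequence of Proposition~\ref{Sarason}. For the strong continuity I would verify the four hypotheses of Proposition~\ref{general_theorem} with $X=H(p,0,\alpha)$. Hypothesis (i) is that density. Hypothesis (ii) holds with $C=1$: if $|f|\le|g|$ pointwise then $M_p(r,f)\le M_p(r,g)$ for every $r$, whence $\|f\|_{p,\infty,\alpha}\le\|g\|_{p,\infty,\alpha}$. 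Hypothesis (iii) follows from Proposition~\ref{CO}, since as $t\to0^+$ one has $\|\varphi_t\|_\infty\to1$ and $|\varphi_t(0)|\to0$, so the bound on $\|C_t\|$ stays bounded (indeed tends to $1$). The only point requiring care is (iv), namely $\|\varphi_t-\varphi_0\|_{p,\infty,\alpha}\to0$; unlike the case $q<\infty$, this cannot be obtained by dominated convergence and must be handled through the weight.

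To establish (iv) I would split the supremum defining the norm at a fixed radius. Since $\varphi_t$ and $\varphi_0=\mathrm{id}$ are disk self-maps, $|\varphi_t(z)-z|\le2$, so $(1-r)^\alpha M_p(r,\varphi_t-\varphi_0)\le2(1-r)^\alpha$; given $\varepsilon>0$ I fix $r_0<1$ with $2(1-r_0)^\alpha<\varepsilon$, which controls the tail $r\ge r_0$ \emph{uniformly in $t$}. For $r\le r_0$ I use $(1-r)^\alpha\le1$ together with $M_p(r,\varphi_t-\varphi_0)\le\max_{|z|\le r_0}|\varphi_t(z)-z|$, and the latter tends to $0$ as $t\to0^+$ by the uniform convergence of $\varphi_t$ to $\varphi_0$ on compact subsets of $\mathbb D$ (property 3 of semigroups). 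Letting $\varepsilon\to0$ gives (iv), and Proposition~\ref{general_theorem} then yields the strong continuity of $(C_t)$ on $H(p,0,\alpha)$. I expect this verification of (iv) in the supremum norm to be the only genuinely delicate step, precisely because the integral tools available when $q<\infty$ are absent; the tail/compact splitting is what replaces them.

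For the second assertion I would argue as in Theorem~\ref{finite}(b): a semigroup of operators is uniformly continuous exactly when its infinitesimal generator $\Gamma f=Gf'$ is bounded, so it suffices to show that boundedness of $\Gamma$ on $H(p,0,\alpha)$ forces $G\equiv0$. Testing on the monomials $f_n(z)=z^n\in H(p,0,\alpha)$ gives $\Gamma f_n=nGz^{n-1}$, hence $\|\Gamma f_n\|_{p,\infty,\alpha}=n\sup_r(1-r)^\alpha r^{n-1}M_p(r,G)$, while $\|f_n\|_{p,\infty,\alpha}=\sup_r(1-r)^\alpha r^n$. The clean sup-norm analogue of the integral estimate in Theorem~\ref{finite}(b) is to work at the maximizing radius $r_n=n/(n+\alpha)$ of $(1-r)^\alpha r^n$: bounding the supremum in $\|\Gamma f_n\|$ below by its value at $r_n$ and using $\|f_n\|_{p,\infty,\alpha}=(1-r_n)^\alpha r_n^n$, the inequality $\|\Gamma f_n\|_{p,\infty,\alpha}\le\|\Gamma\|\,\|f_n\|_{p,\infty,\alpha}$ simplifies, after dividing by $(1-r_n)^\alpha r_n^{n-1}$, to $nM_p(r_n,G)\le r_n\|\Gamma\|\le\|\Gamma\|$. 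Since $r_n\to1$, the monotonicity of $r\mapsto M_p(r,G)$ gives $M_p(\delta,G)\le\|\Gamma\|/n$ for each fixed $\delta\in(0,1)$ and all large $n$, so $M_p(\delta,G)=0$ for every $\delta$, i.e. $G\equiv0$. This part should be a short, essentially computational variant of Theorem~\ref{finite}(b).
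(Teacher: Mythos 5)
Your proposal is correct and follows essentially the same route as the paper: strong continuity via Proposition~\ref{general_theorem}, with hypothesis (iv) verified by splitting the supremum at a fixed radius $r_0$ (uniform tail bound from $|\varphi_t - \varphi_0|\le 2$, compact-convergence near the origin), and non-uniform continuity by testing the generator $\Gamma f = Gf'$ on the monomials $z^n$. Your only deviation is cosmetic: you evaluate at the maximizing radius $r_n = n/(n+\alpha)$ and divide, whereas the paper restricts the supremum to $(\delta,1)$ for a fixed $\delta$ with a factor $\tfrac12$; both yield $nM_p(\delta,G)\lesssim\|\Gamma\|$ and hence $G\equiv 0$.
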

\begin{proof} Let $(\varphi_t)$ be a semigroup of analytic functions in the unit disk. By Proposition \ref{CO}, we have that $\limsup_{t\to 0^+}||C_t||<+\infty$. Let $\varepsilon>0$ and $r_0<1$ such that $(1-~r_0)^\alpha<~\varepsilon/4.$ Since $\varphi_t\to \varphi_0$ uniformly on compact sets, in particular in $\overline{D(0,r_0)},$ if $t$ is small enough then $M_p(r,\varphi_t-\varphi_0)<\varepsilon$ for $r\leq r_0.$ Hence, using that $M_p(r,\varphi_t-\varphi_0)\leq 2(M_p(r,\varphi_t)+M_p(r,\varphi_0))\leq 4,$ we have that
\begin{multline*}
\|\varphi_t-\varphi_0\|_{p,\infty,\alpha}=\sup_{0\leq r<1}(1-r)^\alpha M_p(r,\varphi_t-\varphi_0)\\
\begin{aligned}
&=\max\left\{\sup_{0\leq r\leq r_0}(1-r)^\alpha M_p(r,\varphi_t-\varphi_0),\sup_{r_0<r<1}(1-r)^\alpha M_p(r,\varphi_t-\varphi_0)\right\}\\&\leq\max\left\{\varepsilon\sup_{0\leq r\leq r_0}(1-r)^\alpha,4\sup_{r_0<r<1}(1-r)^\alpha\right\}=\varepsilon.
\end{aligned}
\end{multline*}
Thus, by Proposition \ref{general_theorem}, the semigroup $C_t$ is strongly continuous on $H(p,0,\alpha)$.

As in the proof of Theorem \ref{finite} we will show that the infinitesimal generator of $(C_t),$ $\Gamma(f)(z)=G(z)f'(z)$ with $G$ the generator of the semigroup $(\varphi_t),$ is not a bounded operator on $H(p,0,\alpha).$ For this, suppose $\|\Gamma f\|_{p,\infty,\alpha}=\|Gf'\|_{p,\infty,\alpha}\leq\|\Gamma\|\|f\|_{p,\infty,\alpha}$ for every $f\in H(p,0,\alpha).$ In particular, if $f_n(z)=z^n,$ $n\geq1,$ then $\|\Gamma\|\|f_n\|_{p,\infty,\alpha}\geq n\|G f_{n-1}\|_{p,\infty,\alpha}.$
Let $\delta\in (0,1)$ be such that, for all $n$, $$\sup_{\delta<r<1}(1-r)^{\alpha}r^{n-1}\geq\frac{1}{2}\sup_{0<r<1}(1-r)^{\alpha}r^{n}.$$ Hence, since the integral means are increasing functions of $r,$ 
\begin{align*}
\|\Gamma\|\|f_n\|_{p,\infty,\alpha}&\geq n\|G f_{n-1}\|_{p,\infty,\alpha}=n\sup_{0<r<1}(1-r)^\alpha M_p(r,f_{n-1}G)\\&\geq n\sup_{\delta<r<1}(1-r)^\alpha r^{n-1} M_p(r,G)\geq n\,M_p(\delta,G)\sup_{\delta<r<1}(1-r)^\alpha r^{n-1}
\\&\geq \frac{n}{2}M_p(\delta,G)\sup_{0<r<1}(1-r)^{\alpha}r^{n}=\frac{n}{2}M_p(\delta,G)\|f_n\|_{p,\infty,\alpha}.
\end{align*}
That is, $$nM_p(\delta,G)\leq 2\|\Gamma\|$$ for $n\in\mathbb{N},$ thus $M_p(\delta,G)=0$ and $G\equiv0.$
\end{proof}

Therefore $H(p,0,\alpha)=[\varphi_t,H(p,0,\alpha)],$ so, for every semigroup $(\varphi_t),$ $$H(p,0,\alpha)\subseteq[\varphi_t,H(p,\infty,\alpha)]\subseteq H(p,\infty,\alpha).$$ But the second inclusion is never an equality, as the following theorem shows.


\begin{thm}\label{nosem}
No nontrivial semigroup induces a strongly continuous semigroup of operators on $H(p,\infty,\alpha).$ In other words, $$[\varphi_t,H(p,\infty,\alpha)]\subsetneq H(p,\infty,\alpha)$$ for every semigroup of analytic functions $(\varphi_t).$
\end{thm}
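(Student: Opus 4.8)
The plan is to reduce the statement to the behaviour of the integral operator $T_\gamma$ attached to the $g$-symbol $\gamma$ of $(\varphi_t)$, and then to split according to whether $\gamma$ lies in the little Bloch space. First I would verify that $X=H(p,\infty,\alpha)$ meets the hypotheses of Proposition~\ref{maximal}: it contains the constants, it has the division property $f\in X\Leftrightarrow (f(z)-f(b))/(z-b)\in X$, and $M:=\sup_{t\in[0,1]}\|C_t\|<\infty$ by Proposition~\ref{CO} together with the continuity of $t\mapsto\varphi_t$. Granting this, Proposition~\ref{maximal} yields
\[
[\varphi_t,H(p,\infty,\alpha)]=\overline{H(p,\infty,\alpha)\cap\bigl(T_\gamma(H(p,\infty,\alpha))\oplus\mathbb C\bigr)}.
\]
Since $H(p,0,\alpha)\subsetneq H(p,\infty,\alpha)$ (for instance $(1-z)^{-(\alpha+1/p)}$ lies in the larger space but not the smaller), it suffices to exhibit, for every nontrivial $(\varphi_t)$, a single function of $H(p,\infty,\alpha)$ outside this closure.

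The easy case is $\gamma\in\cb_0$. Then by Corollary~\ref{integral_compact} (the implication (h)$\Rightarrow$(c)) the operator $T_\gamma\colon H(p,\infty,\alpha)\to H(p,0,\alpha)$ is bounded, so $T_\gamma(H(p,\infty,\alpha))\subseteq H(p,0,\alpha)$. As the constants also belong to $H(p,0,\alpha)$, the set $H(p,\infty,\alpha)\cap(T_\gamma(H(p,\infty,\alpha))\oplus\mathbb C)$ sits inside the closed subspace $H(p,0,\alpha)$, whence $[\varphi_t,H(p,\infty,\alpha)]\subseteq H(p,0,\alpha)\subsetneq H(p,\infty,\alpha)$. (In fact one recovers equality with $H(p,0,\alpha)$, in agreement with the model semigroup $\varphi_t(z)=e^{-t}z$.)

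The substantial case is $\gamma\notin\cb_0$, i.e. $\limsup_{|z|\to1}(1-|z|^2)|\gamma'(z)|>0$, so that $|\gamma'|$ stays large along some sequence $z_n$ with $|z_n|\to1$. Here I would produce an explicit $F\in H(p,\infty,\alpha)$ at positive distance from $H(p,\infty,\alpha)\cap(T_\gamma(H(p,\infty,\alpha))\oplus\mathbb C)$, built from the ingredients of Theorem~\ref{copy_l_infty}: Lemma~\ref{Onoo} locates arcs near the boundary where $(1-r)|\gamma'|$ is bounded below, and Lemma~\ref{Fejer} supplies lacunary Fej\'er blocks concentrated there. When moreover $\gamma\in\cb\setminus\cb_0$, one can shortcut this by reusing the operator $\Phi\colon\ell_\infty\to H(p,\infty,\alpha)$ from Theorem~\ref{copy_l_infty}: for a coefficient sequence $\mathbf a\in\ell_\infty\setminus c_0$ the function $F=\Phi\mathbf a$ is the natural candidate, since $\Phi(c_0)\subseteq H(p,0,\alpha)$ while $\Phi\mathbf a$ keeps non-negligible mass on infinitely many blocks. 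The remaining possibility $\gamma\notin\cb$ arises only when the Denjoy--Wolff point lies on $\mathbb T$; there Theorem~\ref{copy_l_infty} is unavailable, so the lacunary function has to be assembled directly from $G$ through Lemmas~\ref{Onoo} and~\ref{Fejer}.

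The main obstacle is exactly this last estimate: proving that the constructed $F$ stays a fixed distance $\delta>0$ from every $f$ with $Gf'\in H(p,\infty,\alpha)$, equivalently that $\liminf_{t\to0^+}\|F\circ\varphi_t-F\|_{p,\infty,\alpha}>0$. The mechanism is that $Gf'\in H(p,\infty,\alpha)$ forces extra decay of $f'$ precisely on the arcs where $\gamma'$ is large (where $G$ is small), whereas $F$ is engineered to violate that decay on infinitely many such arcs; turning this into a quantitative lower bound, uniform as $t\to0^+$ and matched to the flow $\varphi_t$, is the delicate point and is where the Fej\'er/Onoo machinery of Section~5 does the real work.
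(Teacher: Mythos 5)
Your reduction via Proposition \ref{maximal} and your first case ($\gamma\in\cb_0$, settled by Corollary \ref{integral_compact}) are correct, and agree with what the paper proves later for $b\in\D$. But the substance of the theorem is the case $\gamma\notin\cb_0$, and there you do not give a proof: you name ingredients (Lemmas \ref{Fejer} and \ref{Onoo}, the operator $\Phi$ of Theorem \ref{copy_l_infty}) and explicitly defer the ``delicate point.'' Moreover, the mechanism you describe is backwards. You write that $Gf'\in H(p,\infty,\alpha)$ ``forces extra decay of $f'$ precisely on the arcs where $\gamma'$ is large (where $G$ is small).'' It is the opposite: where $|G|$ is small the condition $Gf'\in H(p,\infty,\alpha)$ imposes essentially no constraint on $f'$; it constrains $f'$ only where $|G|$ is bounded below. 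So a witness concentrated on the arcs of Lemma \ref{Onoo} applied to $\gamma$ (where $(1-r)|\gamma'|\ge\delta$, i.e.\ where $G$ is small) lives exactly in the region where membership in $E=\{f:Gf'\in H(p,\infty,\alpha)\}$ is easiest. Indeed, for $b=0$ one has $\gamma'=z/G$, hence $G\,(T_\gamma f)'=zf\in H(p,\infty,\alpha)$ for every $f$, so $T_\gamma(H(p,\infty,\alpha))\subset E$; in particular $T_\gamma\Phi(\ell_\infty)$ is a copy of $\ell_\infty$ \emph{inside} $[\varphi_t,H(p,\infty,\alpha)]$ --- this is exactly how the paper proves, in the theorem following this one, that $[\varphi_t,H(p,\infty,\alpha)]$ is non-separable when $\gamma\notin\cb_0$. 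There is therefore no reason to expect your candidate $F=\Phi\mathbf{a}$, $\mathbf{a}\in\ell_\infty\setminus c_0$, to lie outside the closure, and you offer no argument that it does. The same objection applies to your third case $\gamma\notin\cb$ (which can indeed occur when $b\in\T$), where you have no plan beyond ``assemble a lacunary function.''

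The paper's proof looks where $G$ is \emph{not} small and uses none of the Section 5 machinery. With $E$ as above (equivalently $Ph'\in H(p,\infty,\alpha)$ for $b=0$, or $(1-z)^2Ph'\in H(p,\infty,\alpha)$ for $b=1$), set $\psi(\theta)=\inf_{z\in S_\theta}|P(z)|$, resp.\ $\psi(\theta)=\inf_{z\in S_\theta}|(1-z)^2P(z)|$. Since $\operatorname{Re}P\ge 0$ gives $1/P\in H^s$ for $s<1$, and $1/(1-z)^2\in H^s$ for $s<1/2$, the relevant nontangential maximal functions are finite a.e., so $\psi(\theta)>0$ for a.e.\ $\theta$. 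Then a short computation (the lower bound $|G|\gtrsim\psi(\theta)$ on the Stolz region, Minkowski's integral inequality, and $\int_0^r(1-\rho)^{-\alpha}\,d\rho\lesssim 1+(1-r)^{1-\alpha}$) shows that every $h\in\overline{E}$ satisfies the local vanishing property
$$\lim_{r\to1^-}(1-r)^\alpha\Bigl(\int_{\theta-(1-r)}^{\theta+(1-r)}|h(re^{it})|^p\,dt\Bigr)^{\frac1p}=0$$
at each $\theta$ with $\psi(\theta)>0$; and the function $f(z)=(1-e^{-i\theta}z)^{-(\alpha+\frac1p)}$ (the very function you mention only to separate $H(p,0,\alpha)$ from $H(p,\infty,\alpha)$) fails this property at such a $\theta$, so $f\notin\overline{E}=[\varphi_t,H(p,\infty,\alpha)]$. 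This argument is uniform in the Denjoy--Wolff point, requires no dichotomy on $\gamma$, and replaces the quantitative lower bound you left unproven.
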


To prove this theorem, by Proposition \ref{maximal} we are interested in the integral operator on $H(p,\infty,\alpha)$ and whether the subspace 
\begin{equation*}
T_\gamma(H(p,\infty,\alpha))=\{h\in H(p,\infty,\alpha):T_\gamma(f)=h\text{ for some }f\in H(p,\infty,\alpha)\}
\end{equation*} 
is dense in $H(p,\infty,\alpha).$ Differentiating we have $$T_\gamma(H(p,\infty,\alpha))=\left\{h\in H(p,\infty,\alpha):\frac{h'}{\gamma'}\in H(p,\infty,\alpha)\right\}.$$ 
If $\gamma$ is the associated g-symbol of a semigroup with Denjoy-Wolff point $b=0$ we study the density of $$E=\left\{h\in H(p,\infty,\alpha):P\, h'\in H(p,\infty,\alpha)\right\}$$ or $$E=\left\{h\in H(p,\infty,\alpha):(1-z)^2P\,h'\in H(p,\infty,\alpha)\right\},$$ for $b=1,$  where $P$ is the function with $\text{Re } P\geq0$ associated to the infinitesimal generator of the semigroup. 

First we prove a lemma that gives us information about the functions in $\overline{E}.$
For $\theta\in[0,2\pi]$ define $S_\theta$ as the Stolz region with vertex $e^{i\theta}$ and the function 
$$
\psi(\theta)=\inf_{z\in S_\theta}|P(z)|
$$ if $b=0$ and  
$$\psi(\theta)=\inf_{z\in S_\theta}|(1-z)^2P(z)|$$ if $b=1$. In both cases, we have that  $\psi(\theta)>0$ a.e. $\theta\in[0,2\pi].$ We only have to prove it for the second case. Since the function $z\mapsto \frac{1}{(1-z)^2}$ belongs to the Hardy space $H^\beta$ for $\beta <1/2$ and $1/P$ belongs to  $H^\alpha$ for $\alpha<1$ (because its real part is positive), we have that  the function $z\mapsto \frac{1}{(1-z)^2P(z)}$ belongs to the Hardy space $H^\delta$ for $\delta<1/3$. Consequently, for almost every $\theta$, 
$$\sup_{z\in S_\theta}\frac{1}{|(1-z)^2P(z)|}<+\infty.$$

\begin{lemma}
Let $h\in \overline{E}$ and $\theta\in[0,2\pi]$ such that $\psi(\theta)>0,$ then 
$$\lim_{r\to1^-}(1-r)^\alpha\left(\int_{\theta-(1-r)}^{\theta+(1-r)}|h(re^{it})|^pdt\right)^{\frac{1}{p}}=0.$$
\end{lemma}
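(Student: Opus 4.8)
The plan is to prove the estimate first for functions $h\in E$ and then transfer it to the closure $\overline{E}$ by a soft continuity argument. For $f\in H(p,\infty,\alpha)$ and the fixed $\theta$, set
\[
\Lambda_r(f)=(1-r)^\alpha\Big(\int_{\theta-(1-r)}^{\theta+(1-r)}|f(re^{it})|^p\,dt\Big)^{1/p},\qquad L(f)=\limsup_{r\to1^-}\Lambda_r(f).
\]
Since the inner integral runs over an arc contained in one period, $\Lambda_r(f)\le(2\pi)^{1/p}(1-r)^\alpha M_p(r,f)\le(2\pi)^{1/p}\|f\|_{p,\infty,\alpha}$, whence $L(f)\le(2\pi)^{1/p}\|f\|_{p,\infty,\alpha}$. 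Moreover $f\mapsto\Lambda_r(f)$ is a seminorm for $p\ge1$ (quasi-seminorm for $p<1$), so $L$ is subadditive (quasi-subadditive). Consequently, if $L$ vanishes on $E$ and $h_n\in E$ satisfy $\|h_n-h\|_{p,\infty,\alpha}\to0$, then $L(h)\lesssim L(h_n)+L(h-h_n)\le C\|h-h_n\|_{p,\infty,\alpha}\to0$, giving the claim for every $h\in\overline{E}$.

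It remains to show $L(h)=0$ for $h\in E$ when $\psi(\theta)>0$. The key point is that on the Stolz region $S_\theta$ one has $|P|\ge\psi(\theta)>0$ (resp.\ $|(1-z)^2P|\ge\psi(\theta)$ if $b=1$). Since $h\in E$ means precisely that $Ph'$ (resp.\ $(1-z)^2Ph'$) belongs to $H(p,\infty,\alpha)$, the growth estimate of Proposition~\ref{growth} applied to this function gives, for $z\in S_\theta$,
\[
|h'(z)|\le\frac{|P(z)h'(z)|}{\psi(\theta)}\le\frac{C\,\|Ph'\|_{p,\infty,\alpha}}{\psi(\theta)\,(1-|z|)^{\alpha+\frac1p}},
\]
and analogously with $\|(1-z)^2Ph'\|_{p,\infty,\alpha}$ when $b=1$. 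Using $|re^{is}-e^{i\theta}|\approx(1-r)+|s-\theta|$ one checks that, once the aperture of $S_\theta$ is fixed large enough, both the radial segment $\{\rho e^{i\theta}:r_1\le\rho\le r\}$ and the short arc $\{re^{it}:|t-\theta|\le1-r\}$ lie in $S_\theta$, so this pointwise bound on $h'$ is available along the whole path.

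Then I would estimate $|h(re^{it})|$ for $|t-\theta|\le1-r$ by integrating $h'$ first radially from a fixed $r_1e^{i\theta}$ to $re^{i\theta}$, then along the arc to $re^{it}$. The radial part contributes $\int_{r_1}^r(1-\rho)^{-\alpha-1/p}\,d\rho$ and the arc part $|t-\theta|(1-r)^{-\alpha-1/p}\le(1-r)^{1-\alpha-1/p}$; distinguishing whether $\alpha+\tfrac1p$ is smaller, equal, or larger than $1$ one obtains $|h(re^{it})|\le C_{r_1}+C(1-r)^{1-\alpha-1/p}$ (with a logarithmic factor in the borderline case). Substituting into $\Lambda_r(h)$ and using that the arc has length $2(1-r)$ yields
\[
\Lambda_r(h)\le(1-r)^{\alpha+\frac1p}\big(C_{r_1}+C(1-r)^{1-\alpha-\frac1p}\big)\longrightarrow0,
\]
because $(1-r)^{\alpha+1/p}\to0$ and $(1-r)^{\alpha+1/p}(1-r)^{1-\alpha-1/p}=(1-r)\to0$. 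This gives $L(h)=0$.

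The main obstacle is the geometric bookkeeping of the last two paragraphs: one must ensure that the aperture of the Stolz region used to define $\psi(\theta)$ is large enough that the radial segment and the arc joining $re^{i\theta}$ to $re^{it}$ remain inside $S_\theta$, so that the bound on $h'$ is legitimate all along the integration path. Everything else—the case analysis on the exponent $\alpha+\tfrac1p$ and the quasi-norm adjustments for $p<1$—is routine.
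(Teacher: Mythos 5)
Your proposal is correct, and its overall architecture (prove the decay for $h\in E$, then pass to $\overline{E}$ using that $\Lambda_r(f)\le (2\pi)^{1/p}\|f\|_{p,\infty,\alpha}$ together with (quasi-)subadditivity of $\limsup_r \Lambda_r$) is exactly the paper's. Where you diverge is in the core estimate on $E$. The paper never passes to pointwise bounds: since $|P|\ge\psi(\theta)$ on the short arcs, it bounds the $L^p$ mean of $h'$ over the arc $\{re^{it}:|t-\theta|\le 1-r\}$ directly by $\|Ph'\|_{p,\infty,\alpha}/\bigl(\psi(\theta)(1-r)^{\alpha}\bigr)$, and then integrates radially, using Minkowski's integral inequality, to get $\bigl(\int_{|t-\theta|\le 1-r}|h(re^{it})|^p\,dt\bigr)^{1/p}\le C+C'(1-r)^{1-\alpha}$. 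You instead apply the growth estimate of Proposition~\ref{growth} to $Ph'\in H(p,\infty,\alpha)$ to get a pointwise bound $|h'(z)|\lesssim (1-|z|)^{-\alpha-\frac1p}$ on $S_\theta$, and then integrate $h'$ along a radial-plus-arc path; quantitatively this yields the same gain of one power of $(1-r)$, since your pointwise bound integrated over the arc reproduces the paper's $(1-r)^{-\alpha}$ arc estimate. Each route has a merit: the paper's is shorter and needs no point-evaluation lemma, but Minkowski's integral inequality as used there requires $p\ge 1$, so for $0<p<1$ the paper's argument needs modification, whereas your pointwise/path-integral version works for all $0<p<\infty$ (only the harmless quasi-triangle constant enters in the closure step). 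The geometric bookkeeping you flag as the main obstacle is not a defect of your approach relative to the paper's: the paper's proof equally needs the arcs $\{\rho e^{it}:|t-\theta|\le 1-\rho\}$ to lie in the Stolz region defining $\psi(\theta)$, so fixing the aperture (any fixed aperture with $|z-e^{i\theta}|\le A(1-|z|)$, $A\ge 2$, suffices) is a shared, routine requirement of both arguments.
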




\begin{proof}
We may assume that $b=0$ and that $\theta=0$.  If $h\in E$ then for every $r\in (0,1)$ we have
\begin{gather*}
(1-r)^\alpha\left(\int_{-(1-r)}^{1-r}\psi^p(0)|h'(re^{it})|^pdt\right)^{\frac{1}{p}}\\\leq (1-r)^\alpha\left(\int_{-(1-r)}^{1-r}|P(re^{it})|^p|h'(re^{it})|^pdt\right)^{\frac{1}{p}}\leq \|Ph'\|_{p,\infty,\alpha}=M.
\end{gather*}
From here, $$\left(\int_{-(1-r)}^{1-r}|h'(re^{it})|^pdt\right)^{\frac{1}{p}}\leq\frac{M/\psi(0)}{(1-r)^\alpha}.$$

Now, writing $$h(re^{it})=h(0)+e^{it}\int_0^rh'(\rho e^{it})\,d\rho$$ and using Minkowski's integral inequality we have that

$$\left(\int_{-(1-r)}^{1-r}|h(re^{it})|^pdt\right)^{\frac{1}{p}}\leq C+\int_0^r\frac{M/\psi(0)}{(1-\rho)^\alpha}=C+\frac{C'}{(1-r)^{\alpha-1}}.$$

Hence, $$\lim_{r\to1^-}(1-r)^\alpha\left(\int_{-(1-r)}^{1-r}|h(re^{it})|^pdt\right)^{\frac{1}{p}}\leq \lim_{r\to1^-}(1-r)^\alpha\left(C+\frac{C'}{(1-r)^{\alpha-1}}\right)=0. $$

Since $$(1-r)^\alpha\left(\int_{-(1-r)}^{1-r}|h(re^{it})|^pdt\right)^{\frac{1}{p}}$$ is bounded by the norm $\|h\|_{p,\infty,\alpha},$ the result also holds for every $h\in \overline{E}.$
\end{proof}

Thus, given any $\theta$ such that $\psi(\theta)>0$, the function $f(z)=\frac{1}{(1-e^{-i\theta}z)^{\alpha+\frac{1}{p}}},$ $z\in\mathbb{D},$ is an example of a function in $H(p,\infty,\alpha)$ that does not belong in $\overline{E},$ proving Theorem \ref{nosem}.

Now that we know $$H(p,0,\alpha)\subseteq[\varphi_t,H(p,\infty,\alpha)]\subsetneq H(p,\infty,\alpha)$$ for every semigroup $(\varphi_t),$ we want to characterize the semigroups for which $$H(p,0,\alpha)=[\varphi_t,H(p,\infty,\alpha)].$$ 

First, we deal with the case $b=0.$ It is worth noticing that, unlike the BMOA case (see \cite[Prop. 3]{BCDMPS}), the integral operator we are interested in is bounded in $H(p,\infty,\alpha)$ for every admissible function $\gamma.$ Indeed, by Proposition \ref{integralO}, $T_\gamma$ is bounded from $H(p,\infty,\alpha)$ to itself if and only if $\gamma\in\mathcal{B},$ that is, $\sup_{z\in\mathbb{D}}(1-|z|)\frac{1}{|P(z)|}<\infty,$ and this is true since every such $P$ induced by a semigroup of analytic functions has positive real part.

\begin{thm} Let $(\varphi_t)$ be a semigroup with Denjoy-Wolff point $b\in\mathbb{D}.$ Then 
$$H(p,0,\alpha)=[\varphi_t,H(p,\infty,\alpha)]\Leftrightarrow \gamma\in\mathcal{B}_0.$$ Moreover, if $\gamma\not\in \mathcal{B}_0$ then the space $[\varphi_t,H(p,\infty,\alpha)]$ contains a subspace isomorphic to $l_\infty.$
\end{thm}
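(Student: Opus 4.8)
The plan is to reduce the whole statement to the behaviour of the single operator $T_\gamma$ via Proposition~\ref{maximal}, and then to read off the dichotomy directly from Corollary~\ref{integral_compact} and Theorem~\ref{copy_l_infty}. Write $X=H(p,\infty,\alpha)$. First I would check the hypotheses of Proposition~\ref{maximal}: $X$ contains the constants, it enjoys the division property $f\in X\Leftrightarrow (f(z)-f(b))/(z-b)\in X$ valid for these spaces, and, since $b=0$ forces $\varphi_t(0)=0$ for all $t$, Proposition~\ref{CO} gives $\|C_t\|=1$, so $\sup_{t\in[0,1]}\|C_t\|<\infty$. Hence
$$
[\varphi_t,X]=\overline{X\cap\bigl(T_\gamma(X)\oplus\mathbb{C}\bigr)}.
$$
I would then record the one fact used in both directions: because $\operatorname{Re}P\ge 0$ forces $\gamma\in\mathcal{B}$ (as noted just before the statement), $T_\gamma$ is bounded on $X$, so $T_\gamma(X)\subseteq X$ and therefore $T_\gamma(X)\subseteq X\cap(T_\gamma(X)\oplus\mathbb{C})\subseteq[\varphi_t,X]$.

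For the implication $\gamma\in\mathcal{B}_0\Rightarrow H(p,0,\alpha)=[\varphi_t,X]$, I would invoke the equivalence (c)$\Leftrightarrow$(h) of Corollary~\ref{integral_compact}: $\gamma\in\mathcal{B}_0$ says exactly that $T_\gamma\colon X\to H(p,0,\alpha)$ is bounded, i.e. $T_\gamma(X)\subseteq H(p,0,\alpha)$. Adding constants keeps us inside $H(p,0,\alpha)$, so $X\cap(T_\gamma(X)\oplus\mathbb{C})\subseteq H(p,0,\alpha)$; as $H(p,0,\alpha)$ is a closed subspace of $X$, passing to closures gives $[\varphi_t,X]\subseteq H(p,0,\alpha)$. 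Combined with the inclusion $H(p,0,\alpha)\subseteq[\varphi_t,X]$ already established for every semigroup, this yields the equality.

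For the converse together with the ``moreover'' clause I would argue by contraposition. Assume $\gamma\notin\mathcal{B}_0$; since $\gamma\in\mathcal{B}$ always, we have $\gamma\in\mathcal{B}\setminus\mathcal{B}_0$, so Theorem~\ref{copy_l_infty} applies and $T_\gamma\colon X\to X$ fixes a copy of $\ell_\infty$. Concretely there is a subspace $Y\subseteq X$ isomorphic to $\ell_\infty$ on which $T_\gamma$ is an isomorphic embedding, so $T_\gamma(Y)$ is again a copy of $\ell_\infty$. By the observation of the first paragraph, $T_\gamma(Y)\subseteq T_\gamma(X)\subseteq[\varphi_t,X]$, hence $[\varphi_t,X]$ contains a subspace isomorphic to $\ell_\infty$ and is in particular non-separable. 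Since $H(p,0,\alpha)$ is separable, $[\varphi_t,X]\neq H(p,0,\alpha)$, which proves the missing direction, while the same copy $T_\gamma(Y)$ is precisely the $\ell_\infty$ announced in the final assertion.

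I expect the only genuinely delicate point to be the transfer in the last paragraph: one must be sure that ``$T_\gamma$ fixes a copy of $\ell_\infty$'' really deposits that copy inside $[\varphi_t,X]$ and not merely inside $X$. This is exactly what the containment $T_\gamma(X)\subseteq X\cap(T_\gamma(X)\oplus\mathbb{C})\subseteq[\varphi_t,X]$ settles, so once that inclusion is in place no further estimates are needed; all of the hard analysis has already been absorbed into the construction behind Theorem~\ref{copy_l_infty}.
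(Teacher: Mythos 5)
Your proof is correct and follows essentially the same route as the paper: both arguments use Proposition~\ref{maximal} together with the boundedness of $T_\gamma$ (from $\gamma\in\mathcal{B}$, since $\mathrm{Re}\,P\ge 0$) to reduce everything to the identity $[\varphi_t,H(p,\infty,\alpha)]=\overline{T_\gamma(H(p,\infty,\alpha))\oplus\mathbb{C}}$, and then read off the dichotomy from Corollary~\ref{integral_compact} and Theorem~\ref{copy_l_infty}. The only difference is organizational: you prove the implication ``equality $\Rightarrow\gamma\in\mathcal{B}_0$'' by contraposition directly from Theorem~\ref{copy_l_infty} (which is exactly how the paper itself proves the relevant implication of Corollary~\ref{integral_compact}), and in doing so you spell out the ``moreover'' clause that the paper's proof leaves implicit.
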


\begin{proof}
By Proposition \ref{maximal}, $$[\varphi_t, H(p,\infty,\alpha)]=\overline{H(p,\infty,\alpha)\cap(T_\gamma(H(p,\infty,\alpha))\oplus \mathbb{C})},$$ 
where $T_\gamma$ is the integral operator with symbol $\gamma(z)=\int_0^z\frac{1}{P(\zeta)}d\zeta.$ 
Since $T_\gamma$ is bounded on $H(p,\infty,\alpha)$ we have $$T_\gamma(H(p,\infty,\alpha))\oplus \mathbb{C}\subset H(p,\infty,\alpha)$$ and therefore $$\overline{H(p,\infty,\alpha)\cap(T_\gamma(H(p,\infty,\alpha))\oplus \mathbb{C})}=\overline{T_\gamma(H(p,\infty,\alpha))\oplus \mathbb{C}}.$$ From here, $$H(p,0,\alpha)=[\varphi_t,H(p,\infty,\alpha)]=\overline{T_\gamma(H(p,\infty,\alpha))\oplus \mathbb{C}}$$ if and only if $T_\gamma(H(p,\infty,\alpha))\subseteq H(p,0,\alpha),$ and, by Proposition \ref{integral_compact}, this is equivalent to $\gamma\in\mathcal{B}_0.$ 
\end{proof}

Now, for $b=1,$ recall that $$E=\left\{h\in H(p,\infty,\alpha):(1-z)^2P\,h'\in H(p,\infty,\alpha)\right\}.$$ As before, we have that $[\varphi_t,H(p,\infty,\alpha)]=\overline{E}$ for $(1-z)^2P(z)$ the generator of the semigroup $(\varphi_t).$ 

\begin{thm}
For every semigroup of analytic functions with Denjoy-Wolff $b\in\mathbb{T}$ the set $\overline{E}$ contains a copy of $l_\infty.$ Consequently, $[\varphi_t,H(p,\infty,\alpha)]\supsetneq H(p,0,\alpha).$
\end{thm}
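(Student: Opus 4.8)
The plan is to produce an isomorphic copy of $\ell_\infty$ directly inside $E$, which suffices because $E\subseteq\overline{E}=[\varphi_t,H(p,\infty,\alpha)]$ (as established just before the statement, via Theorem~\ref{maxsubspace}), and because a subspace isomorphic to the non-separable $\ell_\infty$ cannot sit inside the separable $H(p,0,\alpha)$; this last remark gives the ``consequently'' immediately. Recall that here $G(z)=(1-z)^2P(z)$ with $\text{Re}\,P\ge0$, so membership $h\in E$ means $Gh'\in H(p,\infty,\alpha)$. The whole point is that the extra boundary vanishing of $G$ at the Denjoy--Wolff point $z=1$ makes $E$ large.

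The construction reuses the operator $\Phi\colon\ell_\infty\to H(p,\infty,\alpha)$ built in the proof of Theorem~\ref{copy_l_infty}, now concentrated at $z=1$ by taking $t_n=0$. With $\nu=\alpha+\tfrac1p-1$, I would choose a lacunary sequence $(N_n)_n$ (say $N_{n+1}/N_n\ge 3$) and radii $r_n$ with $N_n(1-r_n)\in[1,2]$, set $g_n=N_n^{\nu}G_{N_n}$ for the Fej\'er-type polynomials of Lemma~\ref{Fejer}, and put $\Phi{\bf a}=\sum_n a_n g_n$. The estimates in the proof of Theorem~\ref{copy_l_infty} apply verbatim (they never required $t_n\ne0$) to show that $\Phi$ maps $\ell_\infty$ boundedly into $H(p,\infty,\alpha)$, and the lower bound \eqref{inferior}, read on the radius $z=r_l$, gives $\|\Phi{\bf a}\|_{p,\infty,\alpha}\gtrsim\|{\bf a}\|_{\ell_\infty}$; hence $\Phi$ is an isomorphic embedding into $H(p,\infty,\alpha)$. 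It then remains only to verify that $\Phi{\bf a}\in E$ for every ${\bf a}\in\ell_\infty$.

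The key new ingredient is an upper bound on the generator near $z=1$. Since $\text{Re}\,P\ge0$, the Herglotz representation yields $|P(z)|\le C/(1-|z|)$, and therefore the pointwise estimate $|G(z)|\le C\,|1-z|^2/(1-|z|)$ on $\mathbb{D}$. Consequently, on $|z|=r$,
$$
(1-r)^{\alpha}M_p\bigl(r,G(\Phi{\bf a})'\bigr)\le C\,(1-r)^{\alpha-1}M_p\bigl(r,|1-z|^2(\Phi{\bf a})'\bigr),
$$
so the problem reduces to estimating the right-hand side. Here one exploits that the blocks $g_n$, hence $g_n'$, are concentrated on the circle near $z=1$ at scale $1/N_n\sim 1-r_n$, where $|1-z|^2\lesssim (1-r)^2+t^2$ has size $N_n^{-2}$. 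This smallness absorbs both the factor $(1-r)^{\alpha-1}$ and the extra power of $N_n$ produced by differentiating $g_n$: a computation at the peak $1-r\sim 1/N_n$ shows the relevant exponent of $N_n$ is $-\alpha+1+\nu-\tfrac1p=0$, so each block contributes $O(1)$, while the off-peak and cross terms are handled by the same lacunary geometric summation already used for \eqref{enerre} and \eqref{loserres} in Theorem~\ref{copy_l_infty}. This gives $\sup_r(1-r)^\alpha M_p(r,G(\Phi{\bf a})')\lesssim\|{\bf a}\|_{\ell_\infty}$, i.e.\ $\Phi{\bf a}\in E$, and the copy of $\ell_\infty$ is $\Phi(\ell_\infty)\subseteq E\subseteq\overline{E}$.

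I expect the main obstacle to be precisely this $L^p$-estimate for $|1-z|^2(\Phi{\bf a})'$. Because $|1-z|^2$ is not analytic, one cannot simply treat it as a Fourier multiplier, and one must genuinely control the $L^p$-tails of the Fej\'er blocks away from the concentration arc, where $|1-z|^2$ is no longer small. The delicate point is to confirm that these far tails contribute at the same order $N_n^{-1/p}$ as the peak (and hence do not dominate), using the decay of the Fej\'er kernel and its derivative; once the single-block scaling and the lacunary summation are in place, both modeled on the machinery of Theorem~\ref{copy_l_infty}, the conclusion follows.
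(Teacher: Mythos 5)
Your embedding step is fine (the estimates of Theorem~\ref{copy_l_infty} do survive taking $t_n=0$, and the separability argument for the ``consequently'' part is correct), but the central claim --- that $\Phi{\bf a}\in E$ for every ${\bf a}\in\ell_\infty$ --- is where the proof breaks, and it is not merely delicate: it is false in general. The first obstruction is that membership in $E$ is sensitive to the boundary behaviour of $P$ \emph{away} from the Denjoy--Wolff point, which your argument never sees. Take $P(z)=\frac{e^{i\theta_0}+z}{e^{i\theta_0}-z}$ with $e^{i\theta_0}\ne 1$; by the Berkson--Porta representation, $G=(1-z)^2P$ generates a semigroup with Denjoy--Wolff point $1$. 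For any polynomial $h$ with $h'(e^{i\theta_0})\ne0$ one has $|G(re^{it})h'(re^{it})|\approx c/\bigl((1-r)+|t-\theta_0|\bigr)$ near $\theta_0$, so for $p>1$, $M_p(r,Gh')\gtrsim (1-r)^{-1/p'}$ and $(1-r)^{\alpha}M_p(r,Gh')\gtrsim (1-r)^{\alpha-1/p'}$, which is unbounded when $\alpha<1/p'$. Hence for such semigroups and such parameters even the individual blocks $g_l=\Phi e_l$ (polynomials, generically with $g_l'(e^{i\theta_0})\neq 0$) fail to lie in $E$. This is precisely why the paper proves $X_{\nu,\beta}\subseteq E$ only when $\alpha>1$, and for general $\alpha$ lands only in $\overline{E}$, via the regularization $h_n=f(0)+V\bigl(\frac{nf'}{n+\psi}\bigr)$ with $\psi=[(1-z)P]^{1-\theta}$, whose factor $\frac{n}{n+\psi}$ damps $f'$ exactly where $(1-z)P$ is large. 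Any correct proof must target $\overline{E}$, not $E$.

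The second obstruction is quantitative and hits even when $P$ causes no trouble away from $1$: the reduction to bounding $(1-r)^{\alpha-1}M_p\bigl(r,|1-z|^2(\Phi{\bf a})'\bigr)$ cannot succeed with Fej\'er blocks, because that quantity is genuinely unbounded --- the tails dominate the peak rather than matching it. Indeed, from $F_{N-1}(z)=\frac{1}{N}z^{1-N}\frac{(1-z^N)^2}{(1-z)^2}$ one gets $(1-z)^2G_N(z)=\frac{1}{N}z^{N+1}(1-z^N)^2$ and hence
$$
(1-z)^2G_N'(z)=\frac{N+1}{N}\,z^N(1-z^N)^2-2z^{2N}(1-z^N)+\frac{2}{N}\,\frac{z^{N+1}(1-z^N)^2}{1-z}\,.
$$
At the critical radius $N(1-r)\in[1,2]$ we have $r^N\approx e^{-1}$ and $|1-z^N|\ge 1-r^N\gtrsim 1$, so the first two terms have unit size and, because $z^N$ winds $N$ times around a fixed circle, they can nearly cancel only on a set of small measure, while the third term is $O\bigl(1/(N|t|)\bigr)$; thus $|(1-z)^2G_N'|\gtrsim 1$ on a set of $t$ of measure bounded below independently of $N$. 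Consequently $|1-z|^2|g_l'|\gtrsim N_l^{\nu}$ on a set of measure $\approx 1$, and $(1-r)^{\alpha-1}M_p\bigl(r,|1-z|^2g_l'\bigr)\gtrsim N_l^{1-\alpha}N_l^{\nu}=N_l^{1/p}\to\infty$, for every $0<p\le\infty$. The structural point is that the Fej\'er kernel's tail decay is exactly $t^{-2}$ (forced by $\|F_N\|_1=1$, $\|F_N\|_\infty=N$ and positivity), and this is exactly cancelled by $|1-z|^2\approx t^2$, leaving the factor $1/(1-|z|)$ unabsorbed on a set of full measure; no finer analysis of the Fej\'er tails can recover it. The paper avoids this by using the kernels $f_n(z)=\delta_n^{\nu}(1+\delta_n-z)^{\beta}$ with $\beta$ as negative as needed, for which $|1-z|^2|f_n'|\lesssim \delta_n^{\nu}|1+\delta_n-z|^{\beta+1}$ still has fast tail decay --- that is what allows the crude bound $|P(z)|\lesssim 1/(1-|z|)$ to suffice there (and, even so, only for $\alpha>1$, with the closure argument covering the remaining cases).
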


Several auxiliary results are needed to prove this theorem. First, we define the space $$X_{\nu,\beta}=\left\{\sum_{n=1}^\infty a_nf_n:(a_n)\in l_\infty\right\},$$ where $f_n(z)=\delta_n^\nu(1+\delta_n-z)^\beta,$ $\delta_n=K^{-n}$ for $K$ big enough, $\nu>0$ and $\beta p<-1.$ 

The first lemma relates the space $X_{\nu,\beta}$ with $H(p,\infty,\alpha)$ for some $\nu$ depending on $p$ and $\alpha.$

\begin{lemma}\label{nu}
Let $F(z)=\sum_{n=1}^\infty |f_n(z)|$ then $$(1-r)^\alpha M_p(r,F)\leq C$$ for every $r$ and for $\nu=-(\alpha+\beta+\frac{1}{p}).$
\end{lemma}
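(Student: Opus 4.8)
The plan is to estimate the integral mean $M_p(r,F)$ for the function $F(z)=\sum_{n=1}^\infty \delta_n^\nu|1+\delta_n-z|^\beta$ by exploiting the geometric spacing $\delta_n=K^{-n}$. The key observation is that each summand $|f_n(z)|=\delta_n^\nu|1+\delta_n-z|^\beta$ is concentrated near the boundary point $z=1$ at ``scale'' $\delta_n$: on the circle $|z|=r$ the factor $|1+\delta_n-re^{it}|$ is comparable to $\max\{\delta_n,|t|\}$ when $r$ is close to $1$, because $|1+\delta_n-re^{it}|^2 \approx (1-r+\delta_n)^2 + t^2$. So the whole problem reduces to a careful size estimate of these ``bump'' functions and the summation over $n$.

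First I would fix $r\in(0,1)$ and compute, for each fixed $n$, the quantity $M_p^p(r,f_n)=\delta_n^{\nu p}\int_0^{2\pi}|1+\delta_n-re^{it}|^{\beta p}\,\frac{dt}{2\pi}$. Using the comparison $|1+\delta_n-re^{it}|\approx (1-r)+\delta_n+|t|$ (valid since $\beta p<-1$ makes the integral converge and dominated by the region near $t=0$), a standard one-variable estimate of $\int (s+|t|)^{\beta p}\,dt$ with $s=(1-r)+\delta_n$ gives $\int_0^{2\pi}|1+\delta_n-re^{it}|^{\beta p}\,dt \approx s^{\beta p+1}=((1-r)+\delta_n)^{\beta p+1}$, the convergence at $t=0$ being guaranteed precisely by the hypothesis $\beta p<-1$. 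Hence $M_p(r,f_n)\approx \delta_n^\nu ((1-r)+\delta_n)^{\beta+\frac{1}{p}}$.

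Next I would sum in $n$. Using the triangle/Minkowski inequality in $L^p$ of the circle, $M_p(r,F)\le \sum_{n=1}^\infty M_p(r,f_n)\lesssim \sum_{n=1}^\infty \delta_n^\nu((1-r)+\delta_n)^{\beta+\frac1p}$. Now I would split the sum at the index where $\delta_n\approx 1-r$: for the ``large-scale'' terms $\delta_n\gtrsim 1-r$ one has $(1-r)+\delta_n\approx\delta_n$, giving a geometric-type sum $\sum \delta_n^{\nu+\beta+\frac1p}=\sum \delta_n^{-\alpha}$; for the ``small-scale'' terms $\delta_n\lesssim 1-r$ one has $(1-r)+\delta_n\approx 1-r$, giving $(1-r)^{\beta+\frac1p}\sum \delta_n^\nu$. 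Because $\delta_n=K^{-n}$ decays geometrically and $\nu>0$, the small-scale sum is a convergent geometric series bounded by a constant times $(1-r)^\nu$ (its largest term), while the large-scale sum is bounded by a constant times $(1-r)^{-\alpha}$ (its largest term at the transition index). In both regimes, after multiplying by $(1-r)^\alpha$ and recalling $\nu=-(\alpha+\beta+\frac1p)$, the two contributions are each bounded by an absolute constant, which yields $(1-r)^\alpha M_p(r,F)\le C$ uniformly in $r$.

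The main obstacle I anticipate is making the splitting-and-summation step genuinely uniform in $r$: one must verify that the constant from the geometric series does not blow up as the transition index moves, and that the comparison $|1+\delta_n-re^{it}|\approx (1-r)+\delta_n+|t|$ holds with constants independent of $n$ and $r$ (this is where choosing $K$ large enough, so that the scales $\delta_n$ are well separated, is used to prevent overlap of the bumps from accumulating). The exponent bookkeeping is delicate but routine once the single-bump estimate $M_p(r,f_n)\approx \delta_n^\nu((1-r)+\delta_n)^{\beta+\frac1p}$ is established; the real care lies in confirming that the choice $\nu=-(\alpha+\beta+\frac1p)$ is exactly what balances the two regimes so that both pieces of the split sum contribute $O(1)$ after multiplication by $(1-r)^\alpha$.
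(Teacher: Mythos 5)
Your argument is correct and is essentially the paper's own proof: the same single-bump estimate $M_p(r,f_n)\le C\,\delta_n^\nu(1+\delta_n-r)^{\beta+\frac{1}{p}}$ (the paper cites the Lemma in Section 4.6 of Duren instead of deriving it from the comparison $|1+\delta_n-re^{it}|\approx(1-r)+\delta_n+|t|$, but that lemma is proved in exactly this way), followed by the same splitting of the sum at the index $l$ determined by $\delta_{l+1}<1-r\le\delta_l$ and the same geometric-series bounds, which close up precisely because $\nu=-(\alpha+\beta+\frac{1}{p})$ with $\alpha,\nu>0$. One cosmetic remark: the uniformity you worry about requires only $K>1$ (so the geometric series converge with constants independent of $l$ and $r$); the largeness of $K$ is needed elsewhere in the paper, not in this lemma.
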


\begin{proof} Adapting the proof of 
\cite[Lemma in Section 4.6]{Dur}, we can see that there exists a constant $C$ such that $$M_p(r,f_n)\leq C \delta_n^\nu (1+\delta_n-r)^{\beta+\frac{1}{p}}.$$

Let $l\in\mathbb{N}$ such that $1-r\in(\delta_{l+1},\delta_l].$ Then, since $1+\delta_n-r\approx \delta_n$ for $n\leq l$ and $1+\delta_n-r\approx 1-r$ for $n>l,$ we have 
\begin{align*}
(1-r)^\alpha M_p(r,F)&\leq \sum_{n=1}^\infty(1-r)^\alpha M_p(r,f_n)\leq C\sum_{n=1}^\infty(1-r)^\alpha\delta_n^\nu(1+\delta_n-r)^{\beta+\frac{1}{p}}\\&\leq C\sum_{n=1}^l(1-r)^\alpha\delta_n^{\nu+\beta+\frac{1}{p}}+C\sum_{n=l+1}^\infty(1-r)^{\alpha+\beta+\frac{1}{p}}\delta_n^\nu\\&\leq C(1-r)^\alpha \delta_l^{-\alpha}+C(1-r)^{-\nu}\delta_{l+1}^\nu\leq C',
 \end{align*}
given that both $\alpha$ and $\nu$ are positive.
\end{proof}

The first property we will need about this spaces is the following:

\begin{prop}\label{xnu}
Define \begin{align*}
\Phi:\,l_\infty&\to X_{\nu,\beta}\\
(a_n)&\to \sum_{n=1}^\infty a_nf_n.
\end{align*} 
Then $\Phi$ is an isomorphism between $l_\infty$ and $X_{\nu,\beta}$ with the norm of $H(p,\infty,\alpha)$ if $\nu=-(\alpha+\beta+\frac{1}{p}).$
\end{prop}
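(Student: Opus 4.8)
The plan is to check the three ingredients that make $\Phi$ an isomorphism onto $X_{\nu,\beta}$ endowed with the $H(p,\infty,\alpha)$ norm: that $\Phi$ is well defined, bounded, and bounded below. Boundedness is immediate from Lemma~\ref{nu}. For ${\bf a}=(a_n)\in l_\infty$ we have the pointwise bound $|\Phi{\bf a}(z)|\le \|{\bf a}\|_{l_\infty}\sum_n|f_n(z)|=\|{\bf a}\|_{l_\infty}F(z)$, so $M_p(r,\Phi{\bf a})\le\|{\bf a}\|_{l_\infty}M_p(r,F)$ and hence $\|\Phi{\bf a}\|_{p,\infty,\alpha}\le C\|{\bf a}\|_{l_\infty}$ by Lemma~\ref{nu}. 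The same estimate (together with the fact that, on a compact subset $|z|\le\rho<1$, $|f_n(z)|\le C_\rho\delta_n^{\nu}=C_\rho K^{-n\nu}$ decays geometrically) shows the series defining $\Phi{\bf a}$ converges uniformly on compact subsets of $\D$, so $\Phi{\bf a}$ is a well-defined element of $H(p,\infty,\alpha)$; linearity is clear.

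The heart of the matter is the lower bound $\|\Phi{\bf a}\|_{p,\infty,\alpha}\ge c\,\|{\bf a}\|_{l_\infty}$. By homogeneity we may assume $\|{\bf a}\|_{l_\infty}=1$ and choose $m$ with $|a_m|\ge 1/2$. The strategy is to show that, at the scale of the $m$-th bump, the term $a_m f_m$ dominates the whole series. I would work on the arc $z=r_m e^{i\theta}$ with $r_m=1-\delta_m$ and $|\theta|\le c_0\delta_m$ for a small fixed $c_0$. There $|1+\delta_m-z|\approx\delta_m$, and since $\beta<0$ this gives $|f_m(z)|\ge \kappa_1\delta_m^{\nu+\beta}$ for a constant $\kappa_1>0$. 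For the remaining terms I split at $n=m$: when $n<m$ one has $|1+\delta_n-z|\approx\delta_n$, so $|f_n(z)|\approx\delta_n^{\nu+\beta}$, and since $\nu+\beta=-(\alpha+\tfrac1p)<0$ these increase geometrically in $n$ and are dominated by the $n=m-1$ term, which is a factor $K^{-(\alpha+1/p)}$ smaller than $\delta_m^{\nu+\beta}$; when $n>m$ one has $|1+\delta_n-z|\ge\text{Re}(1+\delta_n-z)\ge\delta_m$, so $|f_n(z)|\le\delta_n^\nu\delta_m^{\beta}$, and since $\nu>0$ these decrease geometrically and are dominated by the $n=m+1$ term, a factor $K^{-\nu}$ smaller than $\delta_m^{\nu+\beta}$. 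Summing the two geometric series and taking $K$ large enough (uniformly in $m$) yields $\sum_{n\ne m}|f_n(z)|\le\tfrac{\kappa_1}{4}\delta_m^{\nu+\beta}$ on the arc, whence
$$
|\Phi{\bf a}(z)|\ge |a_m|\,|f_m(z)|-\sum_{n\ne m}|f_n(z)|\ge\Big(\tfrac{\kappa_1}{2}-\tfrac{\kappa_1}{4}\Big)\delta_m^{\nu+\beta}=\tfrac{\kappa_1}{4}\,\delta_m^{\nu+\beta}.
$$

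It remains to turn this uniform pointwise lower bound on an arc of angular length $\approx\delta_m$ into a norm estimate. Integrating over the arc,
$$
M_p(r_m,\Phi{\bf a})\ge\left(\int_{-c_0\delta_m}^{c_0\delta_m}|\Phi{\bf a}(r_m e^{i\theta})|^p\,\frac{d\theta}{2\pi}\right)^{1/p}\ge C\,\delta_m^{\nu+\beta}\,\delta_m^{1/p}=C\,\delta_m^{\nu+\beta+\frac1p}.
$$
Multiplying by $(1-r_m)^\alpha=\delta_m^\alpha$ and invoking the relation $\nu=-(\alpha+\beta+\tfrac1p)$, the total exponent $\alpha+\nu+\beta+\tfrac1p$ equals $0$, so $\|\Phi{\bf a}\|_{p,\infty,\alpha}\ge(1-r_m)^\alpha M_p(r_m,\Phi{\bf a})\ge C>0$. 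Since $\|{\bf a}\|_{l_\infty}=1$, this is the desired lower bound with a constant independent of ${\bf a}$; together with the upper bound it shows $\Phi$ is an isomorphism of $l_\infty$ onto $X_{\nu,\beta}$.

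I expect the main obstacle to be the tail estimate of the second paragraph: one must simultaneously control the two geometric series arising from $n<m$ and $n>m$ and verify that a single choice of $K$, independent of $m$ and of ${\bf a}$, forces the off-diagonal mass below the surviving term $a_m f_m$. This rests on the uniform geometry of the points $1+\delta_n$ clustering at the boundary point $1$, namely the upper bound on $|1+\delta_m-z|$ and the lower bounds on $|1+\delta_n-z|$ for $n\ne m$ that hold throughout the arc.
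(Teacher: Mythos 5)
Your proof is correct and follows essentially the same route as the paper's: boundedness of $\Phi$ via Lemma~\ref{nu}, then the lower bound by isolating the dominant term $a_m f_m$ on an arc of angular length $\approx\delta_m$ at radius $1-\delta_m$, controlling the tails $n<m$ and $n>m$ by the two geometric series with ratios $K^{\nu+\beta}$ and $K^{-\nu}$ (both small for $K$ large), and finally integrating over the arc so that the exponent $\alpha+\nu+\beta+\tfrac1p=0$ yields a uniform constant. No gaps worth noting.
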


\begin{proof}
Following the steps of Theorem \ref{copy_l_infty} it is easy to see using the previous lemma that $\Phi$ is well defined and bounded from $l_\infty$ to $X_{\nu,\beta}.$ We only need to show that, for some $C>0$, given $\textbf{a}=(a_n)\in l_\infty$, $C\|\Phi {\bf a}\|_{p,\infty,\alpha}\geq\|\bf a\|_\infty.$ Let $\|\textbf{a}\|_\infty=1$ and $l\in\mathbb{N}$ such that $|a_l|>\frac{1}{2}$ and $\delta_l\approx (1-r_l).$ By definition of $\Phi {\bf a},$ for $|t|<\delta_l$ we have, as in the proof of the previous lemma, 
\begin{align*}
|\Phi {\bf a}(r_le^{it})|&
\geq|a_l||f_l(r_le^{it})|-\sum_{n=1}^{l-1}|a_n||f_n(r_le^{it})|-\sum_{n=l+1}^\infty|a_n||f_n(r_le^{it})|\\&
\geq \frac{1}{2}\delta_l^\nu|1+\delta_l-r_le^{it}|^\beta-\sum_{n=1}^{l-1}\delta_n^\nu|1+\delta_n-r_le^{it}|^\beta-\sum_{n=l+1}^{\infty}\delta_n^\nu|1+\delta_n-r_le^{it}|^\beta\\&
\geq \frac{C}{2}\delta_l^\nu\delta_l^\beta-C'\sum_{n=1}^{l-1}\delta_n^{\nu+\beta}-C''\sum_{n=l+1}^{\infty}\delta_n^\nu\delta_l^\beta\\&
\geq \frac{C}{2}\delta_l^{\nu+\beta}-C'\delta_{l-1}^{\nu+\beta}-C''\delta_{l+1}^\nu\delta_l^\beta
\\&\geq  \frac{C}{2}\delta_l^{\nu+\beta}-C'K^{\nu+\beta}\delta_l^{\nu+\beta}-C''K^{-\nu}\delta_l^{\nu+\beta}
\geq A\delta_l^{\nu+\beta}\geq A'(1-r_l)^{\nu+\beta}.
\end{align*}
Then, since $|t|<\delta_l\approx 1-r_l$
$$M_p^p(r_l,\Phi{\bf a})\geq\int_{|t|<\delta_l}|\Phi {\bf a}(r_le^{it})|^p\frac{dt}{2\pi}\geq \frac{A'}{\pi}(1-r_l)^{(\nu+\beta)p}\delta_l\approx C(1-r_l)^{(\nu+\beta)p+1}$$
it follows that
\begin{align*} \|\Phi {\bf a}\|_{p,\infty,\alpha}&\ge
(1-r_l)^{\alpha}M_p(r_l,\Phi{\bf a}) \\&\geq C(1-r_l)^\alpha(1-r_l)^{\nu+\beta+\frac{1}{p}}= C.
\end{align*}
\end{proof}

Our interest on the space $X_{\nu,\beta}$ comes from the following two propositions. In the first one we prove that, if $\alpha>1,$ the set $E$ contains a copy of $l^\infty.$

\begin{prop}
Let $\nu=-(\alpha+\beta+\frac{1}{p})$ and $\alpha>1,$ then $X_{\nu,\beta}\subseteq E.$
\end{prop}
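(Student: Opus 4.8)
The goal is to show that $X_{\nu,\beta}\subseteq E$ when $\nu=-(\alpha+\beta+\frac{1}{p})$ and $\alpha>1$, where
$$
E=\left\{h\in H(p,\infty,\alpha):(1-z)^2P\,h'\in H(p,\infty,\alpha)\right\}.
$$
By Proposition~\ref{xnu} every element of $X_{\nu,\beta}$ already lies in $H(p,\infty,\alpha)$, so the plan is to fix an arbitrary ${\bf a}=(a_n)\in\ell_\infty$, set $h=\Phi{\bf a}=\sum_n a_nf_n$, and verify the extra condition $(1-z)^2P\,h'\in H(p,\infty,\alpha)$. Since $\text{Re}\,P\ge 0$ gives only a lower bound on $|P|$, to control $(1-z)^2P\,h'$ from above I would exploit that $P$ is bounded on compact sets together with the growth of $P$ near the boundary; more precisely the factor $(1-z)^2$ should absorb the singular behaviour of $P$. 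The cleanest route is to observe that, because $G(z)=(1-z)^2P(z)$ is the infinitesimal generator of a semigroup, $G$ itself is a well-behaved bounded analytic symbol, and in fact $G\in H(\infty,\infty,?)$-type estimates are available from the Berkson--Porta representation. I would isolate the elementary pointwise bound $|(1-z)^2P(z)|\le C(1-|z|)^{-1}$, which is exactly the statement that $\gamma'=\tfrac{1}{(1-z)^2P(z)}$ has a positive-real-part type lower bound; equivalently $G\in\mathcal{B}'$-scale, giving $|G(z)|\lesssim (1-|z|)^{-1}$.

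With that pointwise estimate in hand, the core computation is to differentiate each $f_n(z)=\delta_n^\nu(1+\delta_n-z)^\beta$, giving $f_n'(z)=-\beta\,\delta_n^\nu(1+\delta_n-z)^{\beta-1}$, so that $h'=-\beta\sum_n a_n\delta_n^\nu(1+\delta_n-z)^{\beta-1}$. Multiplying by $(1-z)^2P$ and using $|(1-z)^2P(z)|\lesssim (1-|z|)^{-1}$, I would bound
$$
|(1-z)^2P(z)\,h'(z)|\lesssim \frac{1}{1-|z|}\sum_{n=1}^\infty|a_n|\,\delta_n^\nu\,|1+\delta_n-z|^{\beta-1}.
$$
The plan is then to replicate the geometric-series estimate from Lemma~\ref{nu}, but now with the shifted exponent $\beta-1$ in place of $\beta$ and an extra factor $(1-|z|)^{-1}$. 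Splitting the sum at the index $l$ with $1-r\in(\delta_{l+1},\delta_l]$ and using $1+\delta_n-z\approx\delta_n$ for $n\le l$ and $\approx 1-r$ for $n>l$, I expect the estimate
$$
(1-r)^{\alpha}M_p\!\left(r,(1-z)^2P\,h'\right)\le C\|{\bf a}\|_\infty
$$
to follow, which is exactly $(1-z)^2P\,h'\in H(p,\infty,\alpha)$, hence $h\in E$.

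The main obstacle is the bookkeeping in the two geometric sums, and this is precisely where the hypothesis $\alpha>1$ enters. With the replacement $\beta\mapsto\beta-1$ and the compensating $(1-r)^{-1}$, the effective exponents become $\nu+\beta+\tfrac1p$ and $\alpha-1$ in the two tails; the sum over $n\le l$ now produces a factor $(1-r)^{\alpha}\delta_l^{-\alpha}\cdot\delta_l^{-1}\cdot(1-r)\approx 1$ only because $\alpha-1>0$ ensures the geometric series in $(\beta/2)^{-k(\alpha-1)}$ or $K^{-k(\alpha-1)}$ converges, and symmetrically the tail $n>l$ converges because $\nu>0$. So I would track the two exponents carefully and check that the condition $\alpha>1$ is exactly what keeps the ``near'' sum geometrically summable after losing one power of $(1-z)$ to the differentiation. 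The delicate point to get right is justifying the termwise estimate $M_p(r,(1-z)^2P f_n')\le C\,(1-r)^{-1}\delta_n^\nu(1+\delta_n-r)^{\beta-1+\frac1p}$ uniformly in $n$, which combines the pointwise bound on $G=(1-z)^2P$ with the Duren-type integral-mean estimate for $(1+\delta_n-z)^{\beta-1}$ used in Lemma~\ref{nu}; once that uniform bound is established, summing and taking the supremum over $r$ finishes the proof.
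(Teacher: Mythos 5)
There is a genuine gap, and it sits exactly at the point you flagged as ``delicate''. Your plan replaces $(1-z)^2P(z)$ by the crude global bound $|(1-z)^2P(z)|\le C(1-|z|)^{-1}$ and then works with the exponent $\beta-1$. This throws away the zero of $(1-z)^2$ at $z=1$, which is the only mechanism available to cancel the worsened singularities of the $f_n'$: those singularities sit at the points $1+\delta_n$, which accumulate precisely at $z=1$. Quantitatively, your termwise estimate $M_p\bigl(r,(1-z)^2Pf_n'\bigr)\le C(1-r)^{-1}\delta_n^\nu(1+\delta_n-r)^{\beta-1+\frac1p}$ is true but too lossy to be summed: splitting at $l$ with $1-r\in(\delta_{l+1},\delta_l]$, the near block $n\le l$ has terms $\approx\delta_n^{\nu+\beta-1+\frac1p}=\delta_n^{-\alpha-1}$, whose sum is $\approx\delta_l^{-\alpha-1}$, so its contribution to $(1-r)^\alpha M_p\bigl(r,(1-z)^2Ph'\bigr)$ is $\approx(1-r)^{\alpha-1}\delta_l^{-\alpha-1}\approx\delta_l^{-2}\to\infty$; the block $n>l$ behaves the same way. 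Your bookkeeping hides this through a sign slip: the extra factor coming from $|G|\lesssim(1-|z|)^{-1}$ is $(1-r)^{-1}$, not $(1-r)$, so your product $(1-r)^{\alpha}\delta_l^{-\alpha}\cdot\delta_l^{-1}\cdot(1-r)$ should read $(1-r)^{\alpha}\delta_l^{-\alpha}\cdot\delta_l^{-1}\cdot(1-r)^{-1}\approx(1-r)^{-2}$, not $\approx1$. Put structurally: $f'\in X_{\nu,\beta-1}\subseteq H(p,\infty,\alpha+1)$, and multiplying by something of size $(1-|z|)^{-1}$ can only land you in $H(p,\infty,\alpha+2)$; your route loses two powers of $(1-r)$ relative to the target and no constant-chasing recovers them. (Also, $\alpha>1$ never actually enters your near-sum: with exponent $-(\alpha+1)$ the geometric domination by the last term holds for every $\alpha>0$.)

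The repair --- and this is what the paper does --- is to keep $(1-z)^2$ coupled to the singularities \emph{before} estimating $P$. Since $\operatorname{Re}(1+\delta_n-z)\ge\delta_n>0$, one has $|1-z|\le|1+\delta_n-z|+\delta_n\le 2|1+\delta_n-z|$, hence $|1-z|^2|1+\delta_n-z|^{\beta-1}\le 4|1+\delta_n-z|^{\beta+1}$, which gives the pointwise bound
\begin{equation*}
|(1-z)^2f'(z)|\le C\sum_{n=1}^\infty\delta_n^\nu|1+\delta_n-z|^{\beta+1}.
\end{equation*}
Now Lemma~\ref{nu}, applied with the shifted parameters $(\alpha-1,\beta+1)$ --- note $\nu=-\bigl((\alpha-1)+(\beta+1)+\frac1p\bigr)$, and $\alpha>1$ is exactly what makes $\alpha-1>0$ and also forces $(\beta+1)p<-1$, since $\nu>0$ gives $\beta<-\alpha-\frac1p$ --- yields $(1-z)^2f'\in H(p,\infty,\alpha-1)$. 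Only at this last stage does one use $\operatorname{Re}P\ge0$, via the Carath\'eodory-type bound $|P(z)|\le C(1-|z|)^{-1}$, i.e.\ $P\in H(\infty,\infty,1)$, to conclude $(1-z)^2Pf'\in H(p,\infty,\alpha)$. So the correct accounting is: the factor $(1-z)^2$ upgrades the exponent $\beta-1$ to $\beta+1$ (a gain of two powers), and $P$ costs one power; in your version the $(1-z)^2$ is spent on nothing while the cost of $P$ remains.
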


\begin{proof}
For $f\in X_{\nu,\beta}$ we already know that $f\in H(p,\infty,\alpha)$ for the given parameter $\nu$ (Lemma \ref{nu}), so we only need to show $(1-z)^2Pf'\in  H(p,\infty,\alpha).$
Since $$f'(z)=-\sum_{n=1}^\infty a_n\delta_n^\nu\beta(1+\delta_n-z)^{\beta-1},$$ it is clear that $f'\in X_{\nu,\beta-1}.$ Multiplying by $(1-z)^2$ we have that $$|f'(z)(1-z)^2|\leq C\sum_{n=1}^\infty\delta_n^\nu|1+\delta_n-z|^{\beta-1}|1-z|^2\leq C\sum_{n=1}^\infty\delta_n^\nu|1+\delta_n-z|^{\beta+1}.$$ 
Hence, by Lemma \ref{nu}, $(1-z)^2 f'\in H(p,\infty,\alpha -1)$. Moreover, since $P$ has positive real part,  $P\in H(\infty,\infty,1)$ and then $(1-z)^2Pf'\in  H(p,\infty,\alpha).$ 
\end{proof}

The next proposition allows us to take away the condition on $\alpha$ by taking the closure on $E.$ Since $\overline{E}=[\varphi_t,H(p,\infty,\alpha)],$ we finally get that, if $b=1,$ $[\varphi_t,H(p,\infty,\alpha)]$ contains a copy of $l^\infty,$ and therefore it can never be $H(p,0,\alpha).$ First we need the following lemma.

\begin{lemma}
For every $f\in X_{\nu,\beta}$ and $\theta\in (0,\alpha)$ $$f'(z)(1-z)[(1-z)P(z)]^{\theta}\in H(p,\infty,\alpha).$$
\end{lemma}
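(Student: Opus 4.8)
The plan is to strip off the factor $[(1-z)P(z)]^\theta$ using the growth of $P$, and then to recognize what is left as a member of the scale to which Lemma~\ref{nu} applies, but with shifted exponents. Throughout I would use that, since $X_{\nu,\beta}\subseteq H(p,\infty,\alpha)$, the parameters satisfy $\nu=-(\alpha+\beta+\frac1p)$ with $\nu>0$; equivalently $\alpha+\beta+\frac1p<0$, and this is the inequality that will ultimately make everything close.

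First I would record two pointwise estimates. Since $P$ has positive real part, $P\in H(\infty,\infty,1)$, so $|P(z)|\le C(1-|z|)^{-1}$ and hence $|P(z)|^\theta\le C(1-|z|)^{-\theta}$; moreover $(1-z)P(z)$ never vanishes and its argument lies in $(-\pi,\pi)$, so $[(1-z)P(z)]^\theta$ is a well-defined analytic function with $\big|[(1-z)P(z)]^\theta\big|=|1-z|^\theta|P(z)|^\theta$. Secondly, for each $n$ one has $\text{Re}(1+\delta_n-z)=\delta_n+(1-\text{Re}\,z)\ge\delta_n$, whence $|1+\delta_n-z|\ge\delta_n$ and therefore $|1-z|\le|1+\delta_n-z|+\delta_n\le 2|1+\delta_n-z|$ for every $z\in\mathbb{D}$. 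Writing $f'(z)=-\beta\sum_n a_n\delta_n^\nu(1+\delta_n-z)^{\beta-1}$ (as noted for $X_{\nu,\beta-1}$) and combining the two estimates gives
\begin{align*}
\big|f'(z)(1-z)[(1-z)P(z)]^\theta\big|
&\le C(1-|z|)^{-\theta}\sum_n\delta_n^\nu|1+\delta_n-z|^{\beta-1}|1-z|^{1+\theta}\\
&\le C(1-|z|)^{-\theta}\sum_n\delta_n^\nu|1+\delta_n-z|^{\beta+\theta}.
\end{align*}
Since $(1-|z|)^{-\theta}$ is constant on each circle $|z|=r$, taking $L^p$ means yields
$$
(1-r)^\alpha M_p\big(r,f'(z)(1-z)[(1-z)P(z)]^\theta\big)\le C(1-r)^{\alpha-\theta}\,M_p\Big(r,\sum_n\delta_n^\nu|1+\delta_n-z|^{\beta+\theta}\Big).
$$

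The right-hand side is exactly the quantity controlled by Lemma~\ref{nu}, applied with the shifted parameters $\alpha'=\alpha-\theta$ and $\beta'=\beta+\theta$ and the same $\nu$: one checks $\nu=-(\alpha'+\beta'+\frac1p)$, so Lemma~\ref{nu} bounds the last mean by $C(1-r)^{-(\alpha-\theta)}$, and taking the supremum over $r$ shows the function lies in $H(p,\infty,\alpha)$. The one delicate point — and the only place the hypotheses are genuinely used — is that these shifted parameters are \emph{admissible} for Lemma~\ref{nu}, i.e.\ that $\alpha'=\alpha-\theta>0$ and $(\beta+\theta)p<-1$. The first is immediate from $\theta<\alpha$. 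For the second, from $\nu>0$ we get $\alpha+\beta+\frac1p<0$, so $\beta+\theta<\beta+\alpha<-\frac1p$, giving $(\beta+\theta)p<-1$. Thus the positivity of $\nu$ together with $\theta<\alpha$ is precisely what guarantees that transferring the full power $|1-z|^{1+\theta}$ onto the factors $|1+\delta_n-z|$ does not push the exponent out of the range where Lemma~\ref{nu} holds; verifying this admissibility is the crux, while the rest is the routine majorization above.
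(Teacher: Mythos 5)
Your proposal is correct and follows essentially the same route as the paper: both bound $|f'(z)(1-z)^{1+\theta}|$ by $C\sum_n\delta_n^\nu|1+\delta_n-z|^{\beta+\theta}$ via $|1-z|\lesssim|1+\delta_n-z|$, apply Lemma~\ref{nu} with the shifted parameters $\alpha-\theta$, $\beta+\theta$ (same $\nu$), and absorb $P^\theta$ using $|P(z)|^\theta\lesssim(1-|z|)^{-\theta}$, i.e.\ $P^\theta\in H(\infty,\infty,\theta)$. The only (welcome) addition is your explicit verification that the shifted parameters remain admissible for Lemma~\ref{nu}, a point the paper leaves implicit.
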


\begin{proof}
As in the proof of the last Proposition, we have that, for $f\in X_{\nu,\beta},$ $$|f'(z)(1-z)^{1+\theta}|\leq C\sum_{n=1}^\infty\delta_n^\nu|1+\delta_n-z|^{\beta-1}|1-z|^{1+\theta}\leq C\sum_{n=1}^\infty\delta_n^\nu|1+\delta_n-z|^{\beta+\theta}$$ so, by Lemma \ref{nu}, $(1-z)^{1+\theta} f'\in H(p,\infty,\alpha-\theta).$ Since $P^\theta\in H(\infty,\infty,\theta),$ we have $f'(z)(1-z)^{1+\theta}P(z)^{\theta}\in H(p,\infty,\alpha).$
\end{proof}

\begin{prop}\label{xnucierre}
If $\nu=-(\alpha+\beta+\frac{1}{p})$ then $X_{\nu,\beta}\subseteq \overline{E}.$
\end{prop}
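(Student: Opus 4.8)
The plan is to show that every $f=\sum_{n=1}^\infty a_nf_n\in X_{\nu,\beta}$ lies in $\overline E$. One cannot reduce this to a dense subset: by Proposition \ref{xnu} the map $\Phi$ is an isomorphism between $\ell_\infty$ and $X_{\nu,\beta}$ equipped with the $H(p,\infty,\alpha)$-norm, so $X_{\nu,\beta}$ is non-separable, and the finitely supported coefficient sequences (whose functions are finite combinations of the $f_n$, hence in $H(p,0,\alpha)\subseteq\overline E$) are only dense in the copy of $c_0$. Thus I must produce, for each individual $f$ and each $\varepsilon>0$, an element of $E$ within $\varepsilon$ of $f$, uniformly over the choice of $(a_n)\in\ell_\infty$.

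The approximants come from the preceding lemma. For $\theta\in(0,\alpha)$ put $g_\theta=f'(z)(1-z)[(1-z)P(z)]^\theta$, which the lemma places in $H(p,\infty,\alpha)$, and define $h_\theta$ by $h_\theta(0)=f(0)$ and $h_\theta'=f'[(1-z)P]^{\theta-1}$, so that $(1-z)^2P\,h_\theta'=g_\theta\in H(p,\infty,\alpha)$. To conclude $h_\theta\in E$ I still need $h_\theta\in H(p,\infty,\alpha)$ itself; since $h_\theta'$ is built from $f'\in X_{\nu,\beta-1}$ damped by $[(1-z)P]^{\theta-1}$, this should follow from a dyadic estimate of the same type as Lemma \ref{nu}, splitting the blocks $\{f_n\}$ across the scales $\delta_n$. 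Granting this, $h_\theta\in E\subseteq\overline E$ for every $\theta\in(0,\alpha)$, and the whole problem reduces to approximating $f$ by the family $(h_\theta)$.

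The main obstacle — and what I expect to be the deepest step — is the norm convergence of $h_\theta$ to $f$. Formally $h_\theta'\to f'$ requires $[(1-z)P]^{\theta-1}\to1$, i.e. $\theta\to1$; when $\alpha>1$ this is permitted (indeed $\theta=1$ is admissible and one recovers $f\in E$ as in the previous proposition), but when $\alpha\le1$ the lemma confines $\theta$ to $(0,\alpha)\subseteq(0,1)$, so the endpoint is unreachable and no single fractional family can converge to $f$. To get around this I would not send $\theta$ to a fixed value but run a genuinely multi-scale argument: estimate $\|f-h_\theta\|_{p,\infty,\alpha}$ by splitting the supremum over $r$ into the ranges $r\in(\delta_{l+1},\delta_l]$ and matching, scale by scale, the contribution of the blocks $\{f_n\}$ to that of $h_\theta$, exactly as in the proof of Proposition \ref{xnu}. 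The positivity $\text{Re}\,P\ge0$, which forces $|(1-z)P|$ to be large near the boundary (equivalently $\psi(\theta)>0$ for a.e.\ $\theta$), makes the damping factor $[(1-z)P]^{\theta-1}$ tend to $1$ on each fixed Stolz region $S_\theta$ while staying uniformly controlled near $z=1$; taking $K$ large enough that the scales $\delta_n=K^{-n}$ are well separated should render the off-diagonal blocks geometrically small and uniform in $(a_n)\in\ell_\infty$. Combining these estimates with the closedness of $\overline E$ should yield $f\in\overline E$, hence $X_{\nu,\beta}\subseteq\overline E$; since $X_{\nu,\beta}\cong\ell_\infty$ this produces the copy of $\ell_\infty$ and the strict inclusion $[\varphi_t,H(p,\infty,\alpha)]\supsetneq H(p,0,\alpha)$.
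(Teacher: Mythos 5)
Your setup is sound as far as it goes: the identity $(1-z)^2P\,h_\theta'=f'(1-z)[(1-z)P]^{\theta}$, the appeal to the preceding lemma, and the observation that finite coefficient sequences only reach the $c_0$ part of $X_{\nu,\beta}$ are all correct, and you have even located the obstruction (the endpoint $\theta=1$ is unreachable when $\alpha\le1$). But the workaround you sketch cannot close the gap, because the family $h_\theta$ itself is defective. First, the claim that $h_\theta\in H(p,\infty,\alpha)$ "should follow from a dyadic estimate of the same type as Lemma \ref{nu}" is false: the multiplier $[(1-z)P]^{\theta-1}$ is \emph{unbounded} near $z=1$, which is exactly where the blocks $f_n$ carry their norm. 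Take $P\equiv1$ (so $G(z)=(1-z)^2$, a legitimate generator) and $a_n\equiv1$. At $r_l=1-\delta_l$, on the arc $|t|\lesssim\delta_l$, the block-separation estimate of Proposition \ref{xnu} gives $|f'|\gtrsim\delta_l^{\nu+\beta-1}$ while $|1-z|^{\theta-1}\gtrsim\delta_l^{\theta-1}$, so
$$
(1-r_l)^{\alpha+1}M_p(r_l,h_\theta')\;\gtrsim\;\delta_l^{\alpha+1}\,\delta_l^{\nu+\beta-1+\theta-1}\,\delta_l^{\frac1p}\;=\;\delta_l^{\theta-1}\;\longrightarrow\;\infty ,
$$
using $\nu+\beta+\frac1p=-\alpha$; by Lemma \ref{derivative} this means $h_\theta\notin H(p,\infty,\alpha)$ at all. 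Second, even where $h_\theta$ makes sense there is no limiting process left in your family: for fixed $\theta<1$ the damping factor is not merely "not close to $1$" — it blows up on the region carrying the norm — so no scale-by-scale matching can make $\|f-h_\theta\|_{p,\infty,\alpha}$ small. What is missing is a second, regularizing parameter.

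That second parameter is the heart of the paper's proof. Fix $\theta\in(0,\min(1,\alpha))$, set $\psi=[(1-z)P]^{1-\theta}$, and define $h_n=f(0)+V\bigl(\frac{n}{n+\psi}f'\bigr)$ for $n\in\mathbb{N}$. Since $\operatorname{Re}P\ge0$ and $\operatorname{Re}(1-z)>0$, the function $\psi$ maps $\mathbb{D}$ into the sector $\{|\text{Arg}\,w|<\pi(1-\theta)\}$, which stays away from the negative reals; hence $\frac{n}{n+\psi}$ and $\frac{\psi}{n+\psi}$ are bounded uniformly in $n$. This single fact repairs both defects at once: (i) $h_n'=\frac{n}{n+\psi}f'\in H(p,\infty,\alpha+1)$, so $h_n\in H(p,\infty,\alpha)$; (ii) $P(1-z)^2h_n'=\bigl[f'(1-z)^{1+\theta}P^{\theta}\bigr]\frac{n\psi}{n+\psi}$, and the bracket lies in $H(p,\infty,\alpha)$ by your lemma, so $h_n\in E$; (iii) $f-h_n=V\bigl(\frac{\psi}{n+\psi}f'\bigr)$, and now there is a genuine limit $n\to\infty$. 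The convergence in (iii) is where the multi-scale work you anticipated actually takes place: on $A_n=\{re^{it}:|t|\le n(1-r)\}$ one has $|(1-z)P|\lesssim n$, so $\bigl|\frac{\psi}{n+\psi}\bigr|$ decays like a negative power of $n$; on $B_n=\{n(1-r)<|t|\le\pi\}$ the blocks $f_k$ decay like $|t|^{\beta}$ and the sum over scales is controlled as in Lemma \ref{nu}. Your instinct about where the difficulty sits was right, but it is the resolvent-type factor $\frac{n}{n+\psi}$, not the exponent $\theta$, that must carry the approximation.
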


\begin{proof}
For $f\in X_{\nu,\beta}$ we define $$h_n=f(0)+V\left(\frac{nf'}{n+\psi}\right),$$ where $V$ is the Volterra operator and $\psi(z)=[(1-z)P(z)]^{1-\theta}$ for $0<\theta<1.$ 

Since $$\psi(\mathbb{D})\subseteq\Delta=\{w:\text{Arg}\,w\in(-\pi(1-\theta),\pi(1-\theta))\},$$ we have that there exists a constant $M$ such that $\big|\frac{w}{n+w}\big|<M$ for every $w\in\Delta$ and $n\in\mathbb{N}.$ Now, since $\frac{n}{n+\psi(z)}=1-\frac{\psi(z)}{n+\psi(z)}$ we have $\big|\frac{n}{n+\psi(z)}\big|<M+1$ for every $z\in\mathbb{D}$ and therefore it is a bounded function. This allows us to show that $h'_n=\frac{nf'}{n+\psi}\in  H(p,\infty,\alpha+1),$ since $f'\in X_{\nu,\beta-1}\subseteq H(p,\infty,\alpha+1)$ for every $f\in X_{\nu,\beta}$, and $\frac{n}{n+\psi}\in H^\infty.$ From here, $h_n\in H(p,\infty,\alpha).$ 

Moreover, $h_n\in E$ for every $n\in\mathbb{N}$. Indeed, by the definition of $h_n$ and $\psi$ we have $$P(z)(1-z)^2h'_n(z)=P(z)(1-z)^2\frac{nf'}{n+\psi(z)}=f'(z)(1-z)^{1+\theta}P(z)^{\theta}\psi(z)\frac{n}{n+\psi(z)}.$$ By the previous Lemma, for every $f\in X_{\nu,\beta}$ and $\theta\in (0,\alpha)$ $$f'(z)(1-z)^{1+\theta}P(z)^{\theta}\in H(p,\infty,\alpha),$$ and recalling that $\frac{\psi}{n+\psi}\in H^\infty,$ we get $P(z)(1-z)^2h'_n(z)\in H(p,\infty,\alpha),$ that is, $h_n\in E.$

To prove $X_{\nu,\beta}\subseteq \overline{E}$ we are going to show that $h_n\to f\text{ in } H(p,\infty,\alpha).$ Taking derivatives, this is equivalent to $h_n'\to f'$ in $H(p,\infty,\alpha+1),$ and by definition of $h_n,$ to $$\left(\frac{n}{n+\psi}-1\right)f'=\left(\frac{\psi}{n+\psi}\right)f'\to 0$$ in $ H(p,\infty,\alpha+1)$

Rephrasing, in the following we want to prove that $\left(\frac{\psi}{n+\psi}\right)g\to 0$ in $H(p,\infty,\alpha)$ for $g\in X_{\nu,\beta}.$ 
Define the sets $$A_n=\{z=re^{it}\in\mathbb{D}:|t|\leq n(1-r), |t|\leq\pi\}$$ and $$B_n=\{z=re^{it}\in\mathbb{D}:n(1-r)<|t|\leq\pi\}$$ and the associated functions $c_n=\frac{\psi}{n+\psi}g\chi_{A_n}$ and $d_n=\frac{\psi}{n+\psi}g\chi_{B_n},$ so $$\frac{\psi}{n+\psi}g=c_n+d_n$$ and 
\begin{equation}\label{final}
\left\|\frac{\psi}{n+\psi}g\right\|_{p,\infty,\alpha}\leq \sup_{0\leq r<1}(1-r)^\alpha[M_p(r,c_n)+M_p(r,d_n)].
\end{equation}

First, suppose $z=re^{it}\in A_n,$ then $|t|\leq n(1-r)$ and $|1-z|\leq |t|+1-r\leq (n+1)(1-r).$ Now, since $\text{Re}\,P>0,$ let $$Q(z)=\frac{P(z)-i\,\text{Im}\,P(0)}{\text{Re}\,P(0)},$$ then $\text{Re}\,Q(z)>0$ and $Q(0)=1,$ so $|Q(z)|\leq\frac{2}{1-|z|}$ and $|P(z)|\leq\frac{2\text{Re}\,P(0)+|\text{Im}\,P(0)|}{1-|z|}.$ Using these last two observations, 
$$|(1-z)P(z)|\leq \left(2\text{Re}\,P(0)+|\text{Im}\,P(0)|\right)(n+1),$$ so $$|\psi(z)|=|(1-z)^\theta P^\theta(z)|\leq \left(2\text{Re}\,P(0)+|\text{Im}\,P(0)|\right)^\theta (n+1)^\theta<\frac{n}{2}$$ for $n$ large enough.
From here $$\left|\frac{\psi}{n+\psi}\right|\leq\frac{\left(2\text{Re}\,P(0)+|\text{Im}\,P(0)|\right)^\theta (n+1)^\theta}{n-\frac{n}{2}}\leq C(n+1)^{\theta-1}=\alpha_n,$$ with $\alpha_n\to0$ when $n\to\infty$ (recall that $0<\theta<1$). Therefore, $$\lim_{n\to\infty}\sup_{0\leq r<1}(1-r)^\alpha M_p(r,c_n)\leq \lim_{n\to\infty}\alpha_n\|g\|_{p,\infty,\alpha}=0.$$

Now, take $n\geq 7.$ If $z=re^{it}\in B_n,$ then $1-r<\frac{\pi}{n}$, so $r>1/2.$ Recall that $d_n=\frac{\psi}{n+\psi}\sum_{k=1}^\infty a_kf_k \chi_{B_n}$ with $f_k(z)=\delta_k^\nu(1+\delta_k-z)^\beta$, $\delta_k=K^{-k}$ and $\beta<0.$ For $z\in B_n$ we have $|1+\delta_k-z|\geq|r-z|=r|1-e^{it}|\geq \frac{r}{\pi}|t|\geq \frac{|t|}{2\pi}.$ Therefore $|f_k(z)|\leq \delta_k^\nu\left(\frac{|t|}{2\pi}\right)^{\beta}$ and 
\begin{align*} M_p(r,f_k\chi_{B_n})&\leq\delta_k^\nu\left(2\int_{n(1-r)}^\pi\left(\frac{t}{2\pi}\right)^{\beta p}\frac{dt}{2\pi}\right)^{\frac{1}{p}}\\&\leq\delta_k^\nu\left(2\int_{n(1-r)}^\infty\left(\frac{t}{2\pi}\right)^{\beta p}\frac{dt}{2\pi}\right)^{\frac{1}{p}}\leq C\delta_k^\nu\left(n(1-r)\right)^{\beta+\frac{1}{p}}.
\end{align*}

Now we take $l\in\mathbb{N}$ such that $(1-r)K^l\in\left(\frac{1}{\sqrt{n}},\frac{K}{\sqrt{n}}\right]$ (that means, $\left((1-r)K^l\right)^\alpha\leq \left(K/\sqrt{n}\right)^\alpha$ and $\left((1-r)K^l\right)^{-\nu}\leq \left(1/\sqrt{n}\right)^{-\nu}$). The integral mean of $d_n$ can be bound as 

\begin{align*}
M_p(r,d_n)&\leq \left|\frac{\psi}{n+\psi}\right|\|a_n\|_{l^\infty}\sum_{k=1}^\infty M_p(r,f_k\chi_{B_k})\\&
\leq(M+1)\|a_n\|_{l^\infty}\left[\sum_{k=1}^{l-1}C\delta_k^{-\alpha}
+\sum_{k=l}^\infty C'\delta_k^\nu\left(n(1-r)\right)^{\beta+\frac{1}{p}}\right]\\&
\leq(M+1)\|a_n\|_{l^\infty}\left(C K^{l\alpha}
+C'n^{\beta+\frac{1}{p}} K^{-l\nu}\left(1-r\right)^{\beta+\frac{1}{p}}\right)\\&
=\frac{(M+1)\|a_n\|_{l^\infty}}{(1-r)^\alpha}\left(C K^{l\alpha}(1-r)^\alpha+C'n^{\beta+\frac{1}{p}}K^{-l\nu}(1-r)^{-\nu}\right)\\&
\leq\frac{(M+1)\|a_n\|_{l^\infty}}{(1-r)^\alpha}\left(C\frac{K^{\alpha}}{n^{\alpha/2}}+C'n^{\beta+\frac{1}{p}}n^{\nu/2}\right)
\\&=\frac{(M+1)\|a_n\|_{l^\infty}}{(1-r)^\alpha}\left(CK^{\alpha}n^{-\alpha/2}+C'n^{-\alpha-\nu/2}\right).
\end{align*}
From here clearly $\sup_{0\leq r<1}(1-r)^\alpha M_p(r,d_n)\to0$ when $n\to\infty,$ and, by (\ref{final}), $$\left\|\frac{\psi}{n+\psi}g\right\|_{p,\infty,\alpha}\leq \sup_{0\leq r<1}(1-r)^\alpha[M_p(r,c_n)+M_p(r,d_n)]\to0$$ when $n\to\infty.$ This finishes the proof.
\end{proof}


\section*{Acknowledgements}
This work was partly developed during the first author’s two different research stays at Universidad de Sevilla and IMUS. She wants to use this opportunity to thank everyone there for their hospitality.

\bibliographystyle{amsplain}

\end{document}